\numberwithin{equation}{section}
\def\R{\mathbb R}
\def\N{\mathbb N}
\newcommand{\B}{\mathcal{B}}
\newcommand{\K}{\mathcal{K}}
\newcommand{\C}{\mathcal{C}}
\renewcommand{\H}{\mathcal{H}}
\def\om{\omega}
\def\Om{\Omega}
\newcommand{\rn}{\mathbb{R}^n}
\newcommand{\mres}{\mathbin{\vrule height 1.6ex depth 0pt width
0.13ex\vrule height 0.13ex depth 0pt width 1.3ex}}
\newtheorem*{theorem*}{Theorem}
\newtheorem{theorem}{Theorem}[section]
\newtheorem{lemma}[theorem]{Lemma}
\newtheorem{proposition}[theorem]{Proposition}
\newtheorem*{proposition*}{Proposition}
\newtheorem{corollary}[theorem]{Corollary}
\newtheorem{remark}[theorem]{Remark}
\newtheorem*{remark*}{Remark}
\newtheorem{definition}[theorem]{Definition}
\newtheorem*{definition*}{Definition}
\newcommand{\todo}[1]{\text{\colorbox{yellow}{#1}}}
\title[Smooth approximation of Lipschitz domains and weak curvatures]{Smooth approximation of Lipschitz domains, weak curvatures and isocapacitary estimates}
\begin{document}

\subjclass[2020]{53A07, 46E35, 41A30, 41A63}
\keywords{Lipschitz domain, $W^{2,q}$-domain, Lipschitz characteristics, weak curvature, transversality,
 curvature convergence, isocapacitary estimate}

\begin{abstract}
We provide a novel approach to approximate bounded Lipschitz domains via a sequence of smooth, bounded domains.
The flexibility of our method  allows either inner or outer approximations of Lipschitz domains which also possess weakly defined curvatures, namely, domains whose boundary can be locally described as the graph of a function belonging to the Sobolev space $W^{2,q}$ for some $q\geq 1$. 
The sequences of approximating sets is also characterized by uniform isocapacitary estimates with respect to the initial domain $\Omega$.
\end{abstract}

\author{Carlo Alberto Antonini \textsuperscript{1}}

\address{\textsuperscript{1} Dipartimento di Matematica  ``Federigo Enriques'', Universit\`a di Milano, Via Cesare Saldini 50, 20133 Milano, Italy}
\email{carlo.antonini@unimi.it}
\urladdr{}

\maketitle

\section{Introduction}\label{sec:intro}
In this paper we are concerned with inner and outer approximation of bounded Lipschitz domains $\Omega$ of the Euclidean space $\rn$, $n\geq 2$.
Specifically, we construct two sequences of $C^\infty$-smooth bounded domains $\{\om_m\},\{\Om_m\}$ such that $\om_m\Subset \Om\Subset \Om_m$ for all $m\in\N$, which also satisfy natural covergence properties like, for instance,  in the sense of the Lebesgue measure and  in the sense of Hausdorff to $\Om$. 

Geometric quantities like a Lipschitz characteristic $\mathcal L_\Omega=(L_\Omega,R_\Omega)$ and the diameter $d_\Omega$ of the domain $\Omega$ are comparable to the corresponding ones of its approximating sets $\om_m, \Om_m$. Here, the constant $R_\Omega$ stands for the radius of the ball domains on which the boundary can be described as a function of $(n-1)$-variables-- i.e. the local boundary chart-- and $L_\Omega$ is their Lipschitz constant-- see Section \ref{sec:duue} for    the precise definition of a Lipschitz characteristic of $\Omega$. 

Furthermore,  the smooth charts locally describing the boundaries $\partial \om_m,\partial \Om_m$ are defined on the same reference systems as the local charts describing $\partial \Om$, together with strong convergence in the Sobolev space $W^{1,p}$ for all $p\in [1,\infty)$.

If in addition the local charts describing $\partial \Om$ belong to the Sobolev space $W^{2,q}$ for some $q\in [1,\infty)$, then we also have strong convergence in the $W^{2,q}$-sense. In a certain way, this means that the second fundamental forms $\B_{\om_m}$ and $\B_{\Om_m}$ of the regularized sets converge in $L^q$ to the ``weak" 
 curvature $\B_\Om$ of the initial domain $\Om$.

\medskip

Smooth approximation of open  sets, not necessarily having Lipschitzian boundary, has been object of study by many authors. To the best of our knowledge, the first author who provided an approximation of this kind is  Ne\v{c}as \cite{necas}, followed by Massari \& Pepe \cite{massari} and Doktor \cite{doktor}. The underlying idea behind  their proof is nowadays standard, and it is typically used to approximate sets of finite perimeter. This consists in regularizing the characteristic function of $\Om$ via mollification and convolution, and then define  the approximating set $\Om_m$ as a suitable superlevel set of the mollified characteristic functions--see for instance \cite[Theorem 3.42]{afp} or \cite[Section 13.2]{maggi}. 
We point out that
Schmidt \cite{schmidt} and Gui, Hu \& Li \cite{gui} constructed smooth approximating domains \textit{strictly contained} in $\Omega$ under additional assumptions on the finite perimeter domain $\Om$ , whereas an outer approximation via smooth sets is given by Doktor \cite{doktor} when the domain $\Omega$ is endowed with a Lipschitz continuous boundary.

A different kind of approach, which makes use of Stein's regularized distance, has been recently developed by Ball \& Zarnescu \cite{ball}.  Here, the authors deal with $C^0$ domains, i.e. domains whose boundary can be locally described by merely continuous charts, and hence need not have finite perimeter. We mention that their regularized domains $\Om_\varepsilon$ are defined as the $\varepsilon$-superlevel set of the regularized distance function, which in turn is obtained via mollification of the usual signed distance function.
Here, the parameter $\varepsilon$ can be taken either positive or negative, according to the preferred method of approximation, whether from the inside or outside of $\Omega$.

\vspace{4pt}

The aforementioned techniques have thus been used to treat domains with ``rough" boundaries; however, they do not seem suitable to approximate domains which possess weakly defined curvatures, even in the case of domains having bounded curvatures, e.g. $\partial\Om\in C^{1,1}$.  
Namely, we do not recover any quantitative information or convergence property regarding the second fundamental forms $\B_{\Om_m}$ from the original one $\B_{\Om}$.
This is because first-order estimates regarding $\Om_m$ are proven by a careful pointwise analysis of the gradient of the local charts describing their boundaries. In order to obtain estimates about their second fundamental form $\B_{\Omega_m}$, such pointwise analysis needs to be extended to second-order derivatives, and this calls for the application of the implicit function theorem, for which  $\Om$ is required to be at least of class $C^2$.

This drawback is probably due to the fact that the above regularization  procedures are global in nature, i.e. they are obtained via mollification of functions ``globally" describing $\Om$, like its characteristic function or signed distance, whereas the second fundamental form of hypersurfaces of $\rn$ is defined via local parametrizations.

Comparatively, our proof relies on techniques which, in a sense, can be deemed as local in nature, since the starting point of our method is the regularization of the functions of $(n-1)$-variables which locally describe $\partial \Omega$. Thus, our approach seems more suitable when dealing with weak curvatures, though at the cost of  requiring  $ \Om$ to have a Lipschitz continuous boundary.  

\medskip

Regarding its applications,  approximation via a sequence of smooth bounded domains has proven to be a powerful tool especially when dealing with boundary value problems in Partial Differential Equations. Indeed, by tackling the same boundary value problem (or its suitable regularization) on smoother domains, accordingly one obtains smoother solutions, hence it is possible to perform all the desired computations and infer a priori estimates which do not depend on the full regularity of the approximating sets $\Om_m$, but only on their Lipschitz characteristics or other suitable quantities possibly depending on the second fundamental form $\B_{\Omega_m}$. 
 For instance, various investigations such as \cite{accfm,balci, cia19, Maz67,Maz73} showed that global regularity of solutions to linear and quasilinear PDEs may depend on a  weighted isocapacitary function for subsets $\partial \Omega$, the weight being the norm of the second fundamental form on $\partial \Omega$.

This function, which we denote by $\mathcal K_\Omega$, is defined as
\begin{equation}\label{dic7}
\mathcal K_\Omega(r) =
\displaystyle 
\sup _{
\begin{tiny}
 \begin{array}{c}{E\subset   B_r(x)} \\
x\in \partial \Omega
 \end{array}
  \end{tiny}
} \frac{\int _{\partial \Om \cap E} |\mathcal B_\Omega|d\mathcal H^{n-1}}{{\rm cap} (E, B_r(x))}\qquad \hbox{for $r>0$}\,,
\end{equation}
and it was first introduced in \cite{cia19}. Above,  ${\rm cap} (E, B_r(x))$ denotes the standard capacity of a compact set $E$ relative to the ball $B_r(x)$, i.e.
\begin{equation*}
    {\rm cap} (E, B_r(x))=\inf\bigg\{\int_{B_r(x)} |\nabla v|^2\,dx\,:\,v\in C_c^{0,1}(B_r(x))\,,\,v\geq 1\text{ on $E$}\bigg\}\,,
\end{equation*}
where $C^{0,1}_c(A)$ is the set of Lipschitz continuous functions with compact support in $A$.

We remark that, in order for $\K_\Omega(r)$ to be well defined, it suffices that $\partial \Omega$ is  Lipschitz continuous and belongs to $W^{2,1}$, as it can be inferred from inequalities \eqref{BBB} below.

\subsection*{Plan of the paper}
The rest of the paper is organized as follows: in Section \ref{sec:duue}, we  explain some non-standard notation used throughout the paper, and provide the definitions of $\mathcal{L}_\Omega$-Lipschitz domain, of $W^{2,q}$-domain and of weak curvature.

In Section \ref{sec:mainresult} we state in detail our main results, and we provide a few comments and an outline of their proofs.

 In Section \ref{secc:aux} we state and prove a useful convergence property of mollification and convolution, which will be used in the proof of the convergence properties of the approximating sets.

In Section \ref{sec:tras} we introduce the notion of \textit{transversality} of a unit vector $\mathbf{n}$ to a Lipschitz function $\phi$, and we show a very interesting fact, i.e. this transversality property is equivalent to the graphicality of $\phi$ with respect to the coordinate system $(y',y_n)$ having $\mathbf{n}=e_n$. We then close this section  by showing that the transversality condition-- hence the graphicality with respect to the reference system $(y',y_n)$-- is inherited by the convoluted function $M_m(\phi)$.

As a byproduct, we will find an interesting, yet expected result: if $\partial \Omega\in W^{2,q}$, then any Lipschitz function locally describing $\partial \Omega$ is of class $W^{2,q}$. This means that second-order Sobolev regularity is an intrinsic property  of the local charts describing $\partial \Omega$-- see Corollary \ref{yessa1}.

Finally, Section \ref{sec:apprmain} is devoted to the proof of the main Theorem \ref{appr:mthm}.

\section{Basic notation and definitions}\label{sec:duue}

In this section, we provide the relevant definitions and notation of use throughout the rest of the paper.

\begin{itemize}[leftmargin=*]
\itemsep3pt
\item For $d\in \N$, $U\subset \R^d$ open, and a function $v\,:U\,\to \R$, we shall denote by $\nabla v$ its $d$-dimensional gradient, and $\nabla^2 v$ its hessian matrix.
We will often use the short-hand notation for its level and sublevel sets
\begin{equation*}
    \begin{split}
        &\{v<0\}\coloneqq\{z\in U\,:\,v(z)<0\}.
        \\
        &\{v=0\}\coloneqq\{z\in U\,:\,v(z)=0\}\,.
    \end{split}
\end{equation*}

\item We denote by~$W^{k, p}(\Omega)$ the usual Sobolev space of~$L^p(\Omega)$ weakly differentiable functions having weak $k$-th order derivatives in~$L^p(\Omega)$. 

\noindent For any $\alpha\in (0,1]$, the spaces $C^k(\Omega)$ and  $C^{k,\alpha}(\Omega)$ will denote, respectively,  the space of functions with continuous and $\alpha$-H\"older continuous derivatives up to order $k\in \N$.

\item Point of $\R^n$ will be written as $x=(x',x_n)$, with $x'\in \R^{n-1}$ and $x_n\in \R$. We write $B_r(x)$ to denote the $n$-dimensional ball of radius $r>0$ and centered at $x\in \R^n$. Also, $B'_r(x')$ will denote  the $(n-1)$-dimensional ball of radius $r>0$ and centered at $x'\in \R^{n-1}$---when the centers are omitted, the balls are assumed to be centered at the origin, i.e. $B_r :=B_r(0)$ and $B_r' := B_r'(0')$.

\item For $d\in \N$, and for a given  matrix $X\in \mathbb{M}_{d\times d}$, we shall denote by $|X|$ its 
Frobenius Norm  $|X|=\sqrt{\mathrm{tr}(X^t X)}$, where $X^t$ is the transpose of $X$.

\item Given a Lebesgue measurable set $A$, we shall write $|A|$ for its Lebesgue measure.  Also, given two open bounded sets $A,B$, we will denote by $\mathrm{dist}_\H(A,B)$ their Hausdorff distance.

\item For a given function $\phi\,:\,U\to \R$ with $U\subset \R^{n-1}$ open, 
we write $G_\phi$ and $S_\phi$ to denote its graph and subgraph in $\R^n$, i.e.
\begin{equation*}
    G_\phi=\{x=\big(x',\phi(x')\big):\,x'\in U\}\quad\text{and}\quad  S_\phi=\{x=\big(x',x_n\big):\,x'\in U,\,x_n<\phi(x')\}\,.
\end{equation*}

\item We will denote by $\rho=\rho(x')$ the standard convolution Kernel in $\R^{n-1}$, i.e.
\begin{equation*}
    \rho(x')=
    \begin{dcases}
        \exp\bigg\{-\frac{1}{1-|x'|^2}\bigg\}\quad &\text{if $|x'|<1$}
        \\
        0\quad &\text{if $|x'|\geq 1$\,,}
    \end{dcases}
\end{equation*}
and we will write $\rho_m(x')=m^{n-1}\rho\big(m\,x'\big)$ for $m\in \N$.
Given $h\in L^1_{loc}(\R^{n-1})$, the convolution operator $M_m(h)$ is defined as
\begin{equation*}
    M_m(h)(x')=h\ast \rho_m(x')=\int_{\R^{n-1}}h(y')\,\rho_m(x'-y')\,dy'\,.
\end{equation*}

\end{itemize}

In the following, we specify the definition of Lipschitz domain and of Lipschitz characteristic.

\begin{definition}[Lipschitz characteristic of a  domain]\label{def:lip}
\rm{An open, connected set $\Om$ in $\rn$ is called a Lipschitz domain if 
 there exist  constants $L_\Om>0$ and $R_\Om \in (0, 1)$ 
such that, for every $x_0\in \partial \Om$ and $R\in (0, R_\Om]$ there exist an orthogonal coordinate system centered at $0\in\rn$ and an $L_\Om$-Lipschitz continuous function 
$\phi : B'_{R}\to (-\ell, \ell)$, where 
\begin{equation}\label{ell}
\ell = R (1+L_\Om),
\end{equation}
satisfying $\phi(0')=0$ , and
\begin{equation}\label{may100}
\begin{split}
    &\partial \Om \cap \big(B'_{R}\times (-\ell,\ell)\big)=\{(x', \phi (x'))\,:\,x'\in B'_{R}\},
    \\
    & \Om \cap \big(B'_{R}\times (-\ell,\ell)\big)=\{(x',x_n)\,:\,x'\in B'_{R}\,,\,-\ell<x_n<\phi (x')\}.
\end{split}
\end{equation}
Moreover, we set
\begin{equation}\label{may101}
\mathfrak L_\Om = (L_\Om, R_\Om),
\end{equation}
and call $\mathfrak L_\Om$ a Lipschitz characteristic of $\Om$.}
\end{definition}

It is easily seen that the above definition coincides with the standard one for uniformly Lipschitz domains--see e.g. \cite[Section 2.4]{henrot}. Our definition has the advantage of pointing out $\mathfrak{L}_\Om = (L_\Om, R_\Om)$ which appears in the characterization of our approximation sets.

We also remark that, in general, a  Lipschitz characteristic $\mathfrak{L}_\Om = (L_\Om, R_\Om)$ is not   uniquely determined. For instance, if $\partial \Omega \in C^1$, then  $L_\Om$ may be taken arbitrarily small, provided that  $R_\Om$ is chosen sufficiently small. 
\medskip

The function $\phi$ in definition \ref{def:lip} is typically called \textit{local (boundary) chart}. 
By Rademacher's theorem, this function is differentiable for $\H^{n-1}$-almost every $x'$, with gradient $\nabla\phi$ bounded by $L_\Om$. 
In particular, this implies that any Lipschitz domain  $\Omega$ admits a tangent plane on $\H^{n-1}$-almost every point of its boundary.

Moreover, the local chart $\phi$ naturally endows $\partial \Omega$ of a local parametrization  $\iota_\phi(x')=\big(x',\phi(x') \big)$, under which the first fundamental form $g=\{g_{ij}\}_{i,j=1}^{n-1}$ is given by
\begin{equation}\label{first:f}
    g_{ij}(x')=\delta_{ij}+\frac{\partial \phi(x')}{\partial x'_i}\,\frac{\partial \phi(x')}{\partial x'_j}\,,
\end{equation}
where $\delta_{ij}$ denotes the Kronecker's delta, and $x'$ is a point of differentiability of $\phi$. Then, the inverse matrix $g^{-1}=\{g^{ij}\}_{i,j=1}^{n-1}$ can be explictly computed:
\begin{equation}\label{inv:first}
    g^{ij}(x')=\delta_{ij}-\frac{1}{1+|\nabla \phi(x')|^2}\,\frac{\partial \phi(x')}{\partial x'_i}\,\frac{\partial \phi(x')}{\partial x'_j}\,.
\end{equation}

Since $\partial \Omega$ admits a tangent plane $\H^{n-1}$-almost everywhere, we may want to define a notion of weak second fundamental form, which extends the classical one for $C^\infty$-smooth domains of $\R^n$.
For this purpose, we need some additional regularity assumptions on $\phi$, and in particular on its second-order derivatives.

\begin{definition}[$W^{2,q}$ domains and weak curvature]\label{2q:domains}
  \rm{
Let $q\in [1,\infty)$.
  We say that a bounded Lipschitz domain $\Om$ is of class $W^{2,q}$ if the local boundary chart $\phi$ satisfying \eqref{may100} belongs to the Sobolev space $W^{2,q}(B'_R)$.
\\
 If $\phi\in W^{2,\infty}(B'_R)$, we say that $\partial \Omega\in C^{1,1}$ (or $\partial \Omega\in W^{2,\infty}$).
  
 If $\partial \Omega\in W^{2,1}$, the weak curvature $\B_\Omega=\{\B_{ij}\}_{i,j=1}^{n-1}$ of $\partial\Omega $ is locally defined as
\begin{equation}\label{def:B}
    \B_{ij}(x')=\frac{1}{\sqrt{1+|\nabla\phi(x')|^2}}\,\frac{\partial^2\phi(x')}{\partial x'_i\partial x'_j}\,,
\end{equation}
  for almost all points $x'$ of differentiability of $\phi$.
  Its norm is then given by
  \begin{equation}\label{deff:B}
      |\B_\Omega(x')|\coloneqq \frac{\sqrt{\mathrm{trace}\big((g^{-1}\,\nabla^2\phi)^2  \big)}}{\sqrt{1+|\nabla\phi(x')|^2}},
  \end{equation}
  where $g^{-1}$ is the inverse matrix of $g$ given by \eqref{inv:first}.}
\end{definition}

The reader may verify that identities \eqref{first:f}-\eqref{deff:B} concur with the usual ones when $\partial \Omega$ is a smooth hypersurface of $\rn$--see e.g. \cite[pp. 246-249]{lee2}.
However, these definitions also make sense when $\phi$ is merely Lipschitz continuous and belongs to the Sobolev space $W^{2,1}$. Indeed, the following inequalities hold true:
\begin{equation}\label{BBB}
   \frac{|\nabla^2\phi(x')|}{(1+L_\Om^2)^{3/2}}\leq |\B_\Omega(x')|\leq |\nabla^2\phi(x')|\,.
\end{equation}

In order to prove \eqref{BBB}, we first recall that for all symmetric matrices $X,Y$, with $X$ definite positive, we have the elementary linear algebra inequalities
\begin{equation*}
    \lambda^2_{\min}|Y|^2\leq \mathrm{tr}\big((XY)^2\big)\leq \lambda^2_{\max}\,|Y|^2\,,
\end{equation*}
where $\lambda_{\min},\lambda_{\max}$ denote the smallest and largest eigenvalues of $X$--see e.g. \cite[Lemma 3.6]{accfm} and its proof. 
Then, owing to \eqref{inv:first}, we observe that the largest and smallest eigenvalues of the matrix $g^{-1}$ are respectively $1$ and $(1+|\nabla \phi|^2)^{-1}$, and since $|\nabla \phi|\leq L_\Omega$ we immediately infer \eqref{BBB}. Inequalities \eqref{BBB} also show that (locally) second fundamental form $\B_\Omega$ is equivalent to the second-order derivatives of the local charts.

We close this section by pointing out that the above definitions can be easily extended to domains with boundary $\partial \Omega\in W^{k,q}$. Similarly, standard definitions follow for domains of class $C^k$ and $C^{k,\alpha}$.

\section{Main results}\label{sec:mainresult}
Having dispensed of the necessary definitions and notation, we can now give a precise statement of our main results. This is the content of this section, coupled with  a few comments and an outline of the proofs. Our first main result reads as follows.

\begin{theorem}\label{appr:mthm}
    Let $\Omega\subset \rn$ be a bounded, Lipschitz domain, with Lipschitz characteristic $\mathcal L_\Omega=(L_\Omega,R_\Omega)$.
    \\ (i) There exist sequences of bounded domains $\{\omega_m\},\{\Omega_m\}$, such that $\partial\omega_m\in C^\infty,\,\partial\Omega_m\in C^\infty$, and
    \begin{equation*}
        \omega_m\Subset\Omega\Subset \Omega_m\quad\text{for all $m\in \N$.}
    \end{equation*}
    Their diameters satisfy
    \begin{equation}\label{diameters}
    d_{\Omega_m}\leq c(n)\,d_\Omega\,,\quad d_{\omega_m}\leq c(n)\,d_\Omega\,,
\end{equation}   
the following convergence property hold true  \begin{equation}\label{leb:distance}
        \lim_{m\to\infty} |\Omega_m\setminus\Omega|=0\,,\quad \lim_{m\to\infty}|\Omega\setminus \omega_m|=0\,,
    \end{equation} 
  the Hausdorff distances safisfy
\begin{equation}\label{hauss:dist}
        \mathrm{dist}_\H(\omega_m,\Omega)+\mathrm{dist}_\H(\Omega_m,\Omega)\leq\frac{12\,L_\Omega\sqrt{1+L_\Omega^2}}{m}\quad\text{for all $m\in \N$,}
    \end{equation}
    and we may choose their Lipschitz characteristics $\mathcal{L}_{\Omega_m}=(L_{\Omega_m},R_{\Omega_m})$ and $\mathcal{L}_{\omega_m}=(L_{\omega_m},R_{\omega_m})$ such that
    \begin{equation}\label{lip:car}
    \begin{split}
        L_{\Omega_m}\leq c(n)(1+L_\Om^2)\,, & \quad   R_{\Omega_m}\geq R_\Omega/\big(c(n)(1+L_\Omega^2)\big) 
        \\
         L_{\omega_m}\leq c(n)(1+L_\Om^2)\,, &\quad  R_{\omega_m}\geq R_\Omega/\big(c(n)(1+L_\Omega^2)\big)\,,\quad\text{for all $m\in \N$.}
        \end{split}
    \end{equation}
    Moreover, the smooth boundaries $\partial \omega_m, \partial \Omega_m$ are described with the help of the same co-ordinate 
    systems as $\partial \Omega$, i.e. there exist finite number of local boundary charts $\{\phi^i\}_{i=1}^N,\{\psi^i_m\}_{i=1}^N$ and $\{\varphi^i_m\}_{i=1}^N$ which describe $\partial \Omega,\,\partial \Omega_m$ and $\partial \omega_m$ respectively, such that for each $i=1,\dots,N$ the functions $\psi^i_m,\varphi^i_m\in C^\infty$ are defined on the same reference system as $\phi^i$, and 
    \begin{equation}\label{lchart:conv1}
        \psi^i_m\xrightarrow{m\to\infty} \phi^i\quad\text{and}\quad \varphi^i_m \xrightarrow{m\to\infty} \phi^i\quad\text{in $ W^{1,p}(B'_{R_\Omega-\varepsilon_0})$}\,,
    \end{equation}
    for all $p\in [1,\infty)$, for all $i=1,\dots,N$, and any fixed constant $\varepsilon_0\in (0,R_\Omega/2)$.
\\ (ii)  If in addition $\partial \Omega\in W^{2,q}$ for some $q\in [1,\infty)$, then
\begin{equation}\label{lchart:conv2}
    \psi^i_m\xrightarrow{m\to\infty} \phi^i\quad\text{and}\quad \varphi^i_m \xrightarrow{m\to\infty} \phi^i\quad\text{in $W^{2,q}(B'_{R_\Omega-\varepsilon_0})$}\,,
\end{equation}
and there exists a constant $\widehat{c}=\widehat{c}(n,\mathcal{L}_\Omega, d_\Omega)$ such that
\begin{equation}\label{iscap:omm'}
        \K_{\Om_m}(r)+\K_{\omega_m}(r)\leq
        \begin{cases}
        \widehat{c}\,\Big\{ \K_{\Om}\big( \widehat{c}\,(r+\tfrac{1}{m})\big)+r\Big\}\quad & \text{if $n\geq 3$}
        \\
        \\
        \widehat{c}\,\Big\{ \K_{\Om}\big(\widehat{c}\,(r+\tfrac{1}{m})\big)+r\,\log(1+\tfrac{1}{r})\Big\}\quad & \text{if $n=2$}
        \end{cases}
    \end{equation}
    for all $m\in \N$ and $r\leq r_0(n,\mathcal{L}_\Omega)$.
\end{theorem}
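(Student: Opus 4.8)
The plan is to regularize $\partial\Om$ one boundary chart at a time via the convolution operator $M_m$, and then patch the smoothed graphs into two global $C^\infty$ domains. First, cover $\partial\Om$ by balls $B_{R_\Om/2}(x_i)$, $i=1,\dots,N$, each carrying a chart $\phi^i:B'_{R_\Om}\to(-\ell,\ell)$ as in \eqref{may100}, and set
\[
\psi^i_m:=M_m(\phi^i)+\tfrac{2L_\Om}{m},\qquad \varphi^i_m:=M_m(\phi^i)-\tfrac{2L_\Om}{m}\qquad\text{on }B'_{R_\Om-\varepsilon_0}.
\]
Since $\phi^i$ is $L_\Om$-Lipschitz one has $\|M_m(\phi^i)-\phi^i\|_{\infty}\le L_\Om/m$ and $|\nabla M_m(\phi^i)|\le L_\Om$, whence $\varphi^i_m<\phi^i<\psi^i_m$, i.e. $S_{\varphi^i_m}\subset S_{\phi^i}\subset S_{\psi^i_m}$; the convergence property of Section \ref{secc:aux} yields $\psi^i_m,\varphi^i_m\to\phi^i$ in $W^{1,p}(B'_{R_\Om-\varepsilon_0})$ for all $p<\infty$ (this is \eqref{lchart:conv1}) and, when $\partial\Om\in W^{2,q}$, also in $W^{2,q}$ (this is \eqref{lchart:conv2}). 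By the transversality analysis of Section \ref{sec:tras}, $e_n$ remains transversal to $M_m(\phi^i)$, so the smoothed functions are still graphs in the same reference systems with slope $\le L_\Om$, and realigning them into honest Lipschitz characteristics of the patched sets degrades the constants by at most a factor $c(n)(1+L_\Om^2)$, giving \eqref{lip:car}.

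Next, assemble $\om_m\Subset\Om\Subset\Om_m$ by induction over $i=1,\dots,N$: replace one boundary patch at a time by the corresponding smoothed graph, using the freedom in the vertical shifts (and, if needed, a cut-off in $x_n$) so that later modifications do not undo earlier ones, and so that near each of its boundary points $\partial\Om_m$ coincides with the graph of $M_m(\phi^i)+\tfrac{2L_\Om}{m}$ for a suitable $i$. A generic level of the patched defining function is regular, which gives $C^\infty$ boundaries, and connectedness persists since every modification is confined to a $\tfrac{c}{m}$-neighbourhood of $\partial\Om$. That same neighbourhood, together with the slab height $\le 3L_\Om/m$ between $\phi^i$ and $\psi^i_m$ and the tilt factor $\sqrt{1+L_\Om^2}$ in passing from base to graph, yields \eqref{hauss:dist}; integrating $\psi^i_m-\phi^i$ and $\phi^i-\varphi^i_m$ gives $|\Om_m\setminus\Om|+|\Om\setminus\om_m|\le\sum_i\int_{B'_{R_\Om}}\big(|\psi^i_m-\phi^i|+|\phi^i-\varphi^i_m|\big)\to0$, i.e. \eqref{leb:distance}; and $\Om_m$ lying in a $\tfrac{c}{m}$-neighbourhood of $\Om$ together with $R_\Om<1\lesssim d_\Om$ gives \eqref{diameters}.

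For the isocapacitary bound \eqref{iscap:omm'}, fix $x_*\in\partial\Om_m$; since the numerator in \eqref{dic7} only sees $\partial\Om_m$ and capacity is monotone, it suffices to estimate $\int_{E}|\B_{\Om_m}|\,d\H^{n-1}\big/\,{\rm cap}(E,B_r(x_*))$ for compact $E\subset\partial\Om_m\cap B_r(x_*)$, and by the construction $E$ is the graph of $\psi^i_m=M_m(\phi^i)+\tfrac{2L_\Om}{m}$ over $A:=\pi(E)\subset B'_r$. Using the right inequality in \eqref{BBB}, the identity $\nabla^2\psi^i_m=M_m(\nabla^2\phi^i)$, Jensen's inequality, $|\nabla\psi^i_m|\le L_\Om$, and Fubini together with $\mathbf 1_A*\rho_m\le\mathbf 1_{A_{1/m}}$ ($A_{1/m}$ the $\tfrac1m$-neighbourhood of $A$),
\[
\int_{E}|\B_{\Om_m}|\,d\H^{n-1}\le\sqrt{1+L_\Om^2}\int_{A}M_m(|\nabla^2\phi^i|)\le\sqrt{1+L_\Om^2}\int_{A_{1/m}}|\nabla^2\phi^i|,
\]
and the left inequality in \eqref{BBB} bounds the last integral by $(1+L_\Om^2)^{3/2}\int_{\partial\Om\cap S}|\B_\Om|\,d\H^{n-1}$, where $S$ is the graph of $\phi^i$ over $A_{1/m}$. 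Since $A_{1/m}\subset B'_{r+1/m}$, $S\subset B_{R'}(\bar x)$ with $\bar x\in\partial\Om$ and $R'\le\widehat c\,(r+\tfrac1m)$, so \eqref{dic7} gives $\int_{\partial\Om\cap S}|\B_\Om|\le\K_\Om(R')\,{\rm cap}(S,B_{R'}(\bar x))$. Finally, the vertical shear $(z',z_n)\mapsto(z',z_n+\psi^i_m(z')-\phi^i(z'))$ is bi-Lipschitz with constants depending only on $L_\Om$ and carries $\partial\Om$ onto $\partial\Om_m$ locally; transporting a near-optimal potential through it, and enlarging the relative ball by a controlled factor, gives ${\rm cap}(S,B_{R'}(\bar x))\le\widehat c\,{\rm cap}(E^\sharp,B_{\widehat c R'}(x_*))$ with $E^\sharp$ the graph of $\psi^i_m$ over $A_{1/m}\supset A$; the only lossy step, passing from $E^\sharp$ back to $E$, is controlled by splitting off the part of $A_{1/m}$ at distance $\gtrsim\tfrac1m$ from $A$ and estimating its contribution to the numerator crudely via \eqref{dic7}--\eqref{BBB} for $\Om$ at scale $\tfrac1m$ and the elementary bound ${\rm cap}(B_\rho,B_{2\rho})\approx\rho^{n-2}$, which contributes the additive term $\le\widehat c\,r$ when $n\ge3$; for $n=2$ the same step produces the correction $r\log(1+\tfrac1r)$, because ${\rm cap}(B_\rho,B_{2\rho})\approx(\log2)^{-1}$ carries no algebraic decay. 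Treating $\om_m$ identically, with $\varphi^i_m$ in place of $\psi^i_m$, yields \eqref{iscap:omm'} for $r\le r_0(n,\mathcal L_\Om)$.

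The main obstacle is the isocapacitary estimate, and within it the capacity comparison of the last paragraph. The $\tfrac1m$-enlargement forced by the mollification is harmless in the numerator — it merely dilates the argument of $\K_\Om$ — but once it reaches the denominator it can strictly increase the capacity of the set one is comparing against, so one cannot simply assert ${\rm cap}(S,\cdot)\lesssim{\rm cap}(E,\cdot)$. Disentangling the ``honest'' part of this comparison from the enlargement error, and proving that the latter is bounded by the additive term $r$ (respectively $r\log(1+\tfrac1r)$ in the plane) \emph{uniformly in} $m$, even though $\|\B_{\Om_m}\|_{L^\infty}\to\infty$, is precisely the point that forces a careful, scale-by-scale use of the weighted isocapacitary inequality \eqref{dic7}--\eqref{BBB} for the fixed domain $\Om$; everything preceding the assembly of $\om_m,\Om_m$ is, by contrast, a careful but essentially standard patching argument once Section \ref{sec:tras} is available.
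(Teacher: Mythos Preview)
The central gap is the gluing step. You set $\psi^i_m:=M_m(\phi^i)+2L_\Om/m$ and then assert that these are the boundary charts of a single smooth domain $\Om_m$, with $\partial\Om_m$ coinciding near each point with one of these shifted mollifications. This cannot be made to work: on an overlap $K^i\cap K^j$ the two surfaces described by $M_m(\phi^i)+2L_\Om/m$ (in the $i$-th frame) and $M_m(\phi^j)+2L_\Om/m$ (in the $j$-th frame) are \emph{different} smooth hypersurfaces --- the original charts $\phi^i,\phi^j$ agree on overlaps only because they parametrize the same set $\partial\Om$, and mollification destroys this exact compatibility. Your ``induction over $i$'' with ``vertical shifts'' and ``cut-offs in $x_n$'' does not address how to produce a $C^\infty$ seam between patches; any such cut-off reintroduces dependence on all neighbouring charts and invalidates the identity $\psi^i_m=M_m(\phi^i)+\text{const}$ you rely on later. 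The paper's missing idea is to build a single \emph{boundary defining function} $F_m=\sum_j(z_n^{j}-\phi^j_m)\,\xi_j-\xi_0$ via a partition of unity and to define $\Om_m=\{F_m<0\}$; the charts $\psi^i_m$ of $\partial\Om_m$ are then obtained from the implicit function theorem applied to $F_m$ (after showing $\partial_{y_n}F^i_m\ge(2\sqrt{1+L_\Om^2})^{-1}$ via the transversality estimates), and are genuinely different from $M_m(\phi^i)+c$.

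This misidentification of $\psi^i_m$ propagates. The convergences \eqref{lchart:conv1}--\eqref{lchart:conv2} are trivial for $M_m(\phi^i)+c$ but require real work for the implicitly defined $\psi^i_m$: one must differentiate $F^i_m(y',\psi^i_m(y'))=0$ once (resp.\ twice), express $\nabla\psi^i_m$ (resp.\ $\nabla^2\psi^i_m$) through the $\nabla\phi^j_m\circ\C^{i,j}_m$ (resp.\ $\nabla^2\phi^j_m\circ\C^{i,j}_m$) and the partition of unity, and invoke Proposition~\ref{prop:convoluz} on the composed mollifications. More importantly, your isocapacitary argument uses $\nabla^2\psi^i_m=M_m(\nabla^2\phi^i)$, which is simply false for the correct $\psi^i_m$; the actual Hessian formula (the paper's \eqref{june77}--\eqref{june999}) contains extra terms of size $\sim R_\Om^{-1}$ coming from $\nabla\xi_j,\nabla^2\xi_j$, and it is \emph{those} terms --- not your capacity-comparison step --- that generate the additive $r$ (resp.\ $r\log(1+\tfrac1r)$) in \eqref{iscap:omm'}. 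The paper also avoids your direct capacity comparison entirely: it works with the Poincar\'e-type quotient $\sup_v\int_{\partial\Om_m}v^2|\B_{\Om_m}|/\int|\nabla v|^2$, transfers the mollification from $|\nabla^2\phi^j|$ to $v^2$ via Fubini (your idea here is right), bounds $|\nabla\sqrt{\widetilde M_m(v^2)}|$ by a pointwise lemma, and only at the end invokes Maz'ya's equivalence between that quotient and $\K$.
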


Let us briefly comment on our result. Part (i) of Theorem \ref{appr:mthm} is mostly analogous to \cite[Theorem 5.1]{doktor}; as expected from domains $\Omega$  with Lipschitz continuous boundary, the local charts of $\partial \Omega_m,\partial \omega_m$ converge to the corresponding  local charts of $\partial \Omega$ in $W^{1,p}$ for all $p\in [1,\infty)$. In particular, by the classical Morrey-Sobolev's embedding Theorems, this entails an ``almost Lipschitz convergence",  i.e. the local charts $\psi^i_m$ and $\varphi^i_m$ converge to $\phi^i$ in every H\"older space $C^{0,\alpha}$ with $\alpha \in (0,1)$.

The main novelty of our result is given in Part (ii), where information about the second fundamental forms $\B_{\omega_m}$ and $\B_{\Omega_m}$ (or equivalently $\nabla^2\varphi^i_m$ and $\nabla^2 \psi^i_m$) is retrieved when $\partial \Omega$ is endowed with a weak curvature. For instance, by definition \eqref{def:B} and from the results of  Theorem \ref{appr:mthm}, via a standard covering argument it is easy to show that
\begin{equation} \label{lchart:conv3}  \int_{\partial\Omega_m}|\B_{\Omega_m}|^qd\H^{n-1}\to \int_{\partial \Omega}|\B_\Omega|^q d\H^{n-1}\quad\text{and}\quad \int_{\partial\omega_m}|\B_{\omega_m}|^q d\H^{n-1}\to \int_{\partial \Omega}|\B_\Omega|^qd\H^{n-1}\,,
\end{equation}
for all $q\in [1,\infty)$ such that $\partial \Omega\in W^{2,q}$.

Other than this, we obtain the isocapacitary estimate \eqref{iscap:omm'}, where
$\K_{\Omega}(r)$ and $\K_{\Omega_m},\K_{\omega_m}$ are the functions defined in \eqref{dic7} relative to $\Omega,\Omega_m$ and $\omega_m$, respectively. In the proof of \eqref{iscap:omm'}, we will also explicitly write the constant $\widehat{c}$ appearing therein.

Finally, the fixed parameter $\varepsilon_0\in (0,R_\Omega/2)$ appearing in \eqref{lchart:conv1} and \eqref{lchart:conv2} is purely technical, and does not affect the validity of the convergence results since  the boundaries $\partial \Omega,\,\partial \Omega_m $ and $\partial \omega_m$ all share the same coordinate cylinders of the kind $B'_{R_\Om/2}\times (-\ell,\ell)$, where $\ell=(1+L_\Omega)\,R_\Omega$.

\medskip

\noindent \textbf{Outline of the proof.} We fix a covering of $\partial \Omega$, with corresponding partition of unity $\{\xi_i\}_{i}$ and local boundary charts $\{\phi^i\}_{i}$, which are $L_\Omega$-Lipschitz continuous. 

Then we regularize each function $\phi^i$ via convolution, and add (or subtract) a suitable constant, so that we obtain $C^\infty$-smooth functions $\{\phi^i_m\}_{i}$ such that $\phi^i_m>\phi^i$ ( or $\phi^i_m<\phi^i$). 

However, in the original reference system, the graphs of these smooth functions $G_{\phi^i_m}$ are not ``glued" together, and thus their union is not the boundary of a domain, unlike the graphs $G_{\phi^i}$ whose union describes $\partial \Omega$-- see Figure 1 below.

To overcome this problem, we define a suitable $C^\infty$-smooth function $F_m$, built upon $\{\phi^i_m\}_i$ and $\{\xi_i\}_i$--  see equation \eqref{bdm:defin} below-- and define the regularized set $\Omega_m$ as the sublevel set $\{F_m<0\}$, so that 
\begin{equation*}
    \partial \Omega_m=\{F_m=0\},
\end{equation*}
and by construction we will have $\omega_m\Subset \Omega\Subset \Omega_m$.

\noindent  The function $F_m$ is called \textit{boundary defining functions} of $\Omega_m$-- see \cite[Section 5.4]{lee1}. 

In order to show that $\partial \Omega_m$ is a smooth manifold, we prove that the gradient of $F_m$ along the directions of graphicality of $\phi^i$ is greater than a positive constant depending on $L_\Omega$-- see estimate \eqref{coervic:Fm}.
This property of $F_m$ will be proven by exploiting the so-called \textit{transversality condition} of $\phi^i$,  which is inherited via convolution by $\phi^i_m$ as well. 
Therefore, $F_m$ is strictly monotone along these directions, which entails that its zero-level set $\partial \Omega_m$ is a smooth manifold with local boundary charts $\psi^i_m$ defined on the same reference system as $\phi^i$.

Thanks to the properties of convolution, we show that $F_m$ converge to the boundary defining function $F$ of $ \Omega$ built upon $\{\phi^i\}_{i}$ and $\{\xi_i\}_i$-- see equations \eqref{bd:defin} and \eqref{bdef:om}-- and thus $\psi^i_m$ converge uniformly to $\phi^i$.

Then, as in the proof of the implicit function theorem, we differentiate the identity $F_m\big(y',\psi^i_m(y')\big)=0$, so that we may express the gradient  $\nabla\psi^i_m$ (and its Hessian $\nabla^2 \psi^i_m$) in terms of $\{\phi^j_m, \nabla \phi^j_m\}_j$ (and $\{\nabla^2 \phi^j_m\}_{j}$), and then
 \eqref{lip:car}, \eqref{lchart:conv1} (and \eqref{lchart:conv2}) will be obtained by exploiting the convergence properties of convolution.
 
Finally, in order to get the isocapacitary estimate \eqref{iscap:omm'}, we make use of the estimates on $|\nabla^2 \psi^i_m|$ obtained in the previous steps, as to evaluate weighted Poincar\'e type quotients of the kind
\begin{equation*}
    \frac{\int_{\partial \Omega_m}v^2\,|\B_{\Omega_m}|\,d\H^{n-1}}{\int_{\R^n}|\nabla v|^2 dx}\,,\quad v\in C^{\infty}_c\big(B_r(x^0_m)\big),\,x^0_m\in \partial \Omega_m
\end{equation*}
in terms of the corresponding quotient with weight $|\B_\Omega|$, and then \eqref{iscap:omm'} will follow from the celebrated 
 isocapacitary equivalency Theorem of Maz'ya \cite{maz61}, \cite[Theorem 2.4.1]{maz}.

\medskip 

\begin{figure}[ht]
\centering
\includegraphics[width=0.8\textwidth]{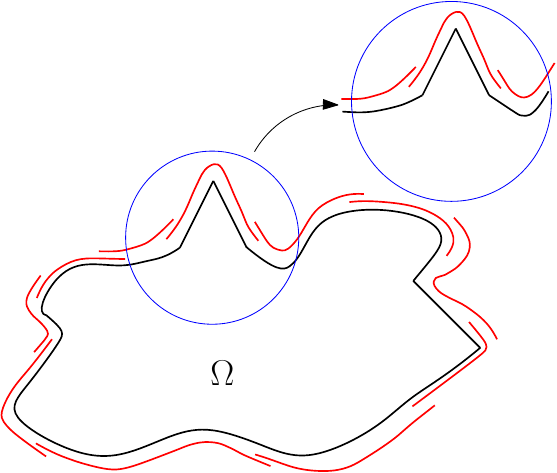}
\caption{In red: the graphs of the regularized local charts (up to isometry).}
\end{figure}

Our next and final result shows the flexibility of our approximation method, which takes into account even higher regularity of the domain $\Omega$. 

\begin{theorem}\label{appr:dd22}
    Under the same notations as Theorem \ref{appr:mthm}, we have that
    \begin{enumerate}
    \item if $\partial \Omega\in C^k$ for some $k\in\N$, then
    \begin{equation*}
        \psi^i_m\xrightarrow{m\to\infty} \phi^i\quad\text{and}\quad \varphi^i_m \xrightarrow{m\to\infty} \phi^i\quad\text{in $C^k(B'_{R_\Omega-\varepsilon_0})$};
    \end{equation*}
   \item  if $\partial \Omega\in C^{k,\alpha}$ for some $k\in\N$ and $\alpha\in (0,1)$, then
    \begin{equation*}
        \psi^i_m\xrightarrow{m\to\infty} \phi^i\quad\text{and}\quad \varphi^i_m \xrightarrow{m\to\infty} \phi^i\quad\text{in $C^{k,\alpha'}(B'_{R_\Omega-\varepsilon_0})$,}
    \end{equation*}
    for all $0<\alpha'<\alpha$;
    \item  if $\partial \Omega\in W^{k,q}$ for some $k\in\N$ and $q\in [1,\infty)$, then
    \begin{equation*}
        \psi^i_m\xrightarrow{m\to\infty} \phi^i\quad\text{and}\quad \varphi^i_m \xrightarrow{m\to\infty} \phi^i\quad\text{in $W^{k,q}(B'_{R_\Omega-\varepsilon_0})$.}
    \end{equation*}
    \item  if $\partial \Omega\in C^{k,1}$ for some $k\in\N$, then
    \begin{equation*}
        \psi^i_m\xrightarrow{m\to\infty} \phi^i\quad\text{and}\quad \varphi^i_m \xrightarrow{m\to\infty} \phi^i\quad\text{weakly-$\ast$ in $W^{k,\infty}(B'_{R_\Omega-\varepsilon_0})$.}
    \end{equation*}
\end{enumerate}

\end{theorem}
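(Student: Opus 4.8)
The plan is to leverage the construction of $\Omega_m$ and $\omega_m$ already carried out in the proof of Theorem~\ref{appr:mthm}, and simply upgrade the convergence of the local charts $\psi^i_m,\varphi^i_m$ under the extra regularity hypotheses. Recall that these charts are obtained via the implicit function theorem applied to the smooth boundary defining function $F_m$, which is assembled from the convoluted charts $\phi^i_m=M_m(\phi^i)\pm c_m$ and the partition of unity $\{\xi_i\}_i$. The identity $F_m\big(y',\psi^i_m(y')\big)=0$, differentiated as in the implicit function theorem, expresses $\nabla^j\psi^i_m$ as a universal rational/algebraic expression in the quantities $\{\nabla^l\phi^j_m,\ l\le j\}_j$, $\{\nabla^l\xi_j\}_j$, and the (bounded away from zero) transversal derivative $\partial_{y_n}F_m$. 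The same formula with $\phi^i_m$ replaced by $\phi^i$ and $\psi^i_m$ by $\phi^i$ holds for the limiting objects. Hence the whole statement reduces to two ingredients: (a) the convergence $M_m(\phi^i)\to\phi^i$ in the relevant function space, for all derivatives up to the order allowed by the regularity; and (b) the stability of the algebraic expression for $\nabla^j\psi^i_m$ under that convergence.

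First I would establish (a), which is the classical theory of mollifiers: if $\phi\in C^k$ then $M_m(\phi)\to\phi$ in $C^k_{loc}$ (uniform convergence of derivatives up to order $k$, since $\partial^\beta M_m(\phi)=M_m(\partial^\beta\phi)$ and $M_m(g)\to g$ uniformly on compacta for continuous $g$); if $\phi\in C^{k,\alpha}$ with $\alpha\in(0,1)$ then $M_m(\phi)\to\phi$ in $C^{k,\alpha'}_{loc}$ for every $\alpha'<\alpha$ (uniform H\"older seminorm bounds on $\partial^\beta M_m(\phi)$ by the $C^{0,\alpha}$ seminorm of $\partial^\beta\phi$, combined with interpolation $\|\cdot\|_{C^{0,\alpha'}}\lesssim\|\cdot\|_{C^0}^{1-\alpha'/\alpha}\|\cdot\|_{C^{0,\alpha}}^{\alpha'/\alpha}$ and $C^0$-convergence); if $\phi\in W^{k,q}$ with $q<\infty$ then $M_m(\phi)\to\phi$ in $W^{k,q}_{loc}$ (standard); and if $\phi\in C^{k,1}=W^{k+1,\infty}$ then $\partial^\beta M_m(\phi)$ is bounded in $W^{1,\infty}$ for $|\beta|=k$ and converges a.e., hence weakly-$*$ in $W^{k+1,\infty}_{loc}$. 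In every case the restriction to the slightly smaller ball $B'_{R_\Omega-\varepsilon_0}$ is exactly the device that keeps us away from the boundary of the domain of $\phi^i$, so that all these local statements become genuine convergences on $B'_{R_\Omega-\varepsilon_0}$, uniformly in $i$ because there are only finitely many charts.

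Next I would transfer these convergences through the implicit-function formula. One writes $\partial_{y_n}F_m\ge \delta>0$ uniformly (this is the coercivity estimate~\eqref{coervic:Fm} already available), so that the rational expressions for $\nabla^j\psi^i_m$ have denominators bounded away from zero; the numerators are polynomials in the derivatives of $\phi^j_m$ and $\xi_j$ of the appropriate orders. For the $C^k$, $C^{k,\alpha'}$ and $W^{k,q}$ cases one invokes that the relevant function space is a Banach algebra on the bounded smooth domain $B'_{R_\Omega-\varepsilon_0}$ (or more precisely closed under the finitely many algebraic operations involved, using that all the quantities in play are uniformly bounded in $L^\infty$ thanks to Part (i) and the Morrey embedding), so convergence of the inputs forces convergence of the outputs. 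For the weakly-$*$ $W^{k,\infty}$ case one argues instead by: uniform $W^{k+1,\infty}$ bounds on $\psi^i_m$ (from the uniform bounds on $\nabla^{k+1}\phi^i_m$), plus already-known strong convergence in, say, $W^{k,q}$ for some finite $q$ from item (3), which pins down the weak-$*$ limit as $\phi^i$; weak-$*$ compactness of bounded sets in $W^{k+1,\infty}$ then gives convergence of the full sequence.

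The main obstacle is purely bookkeeping rather than conceptual: one must verify that the explicit differentiated form of the identity $F_m(y',\psi^i_m(y'))=0$ really does express $\nabla^j\psi^i_m$ through a fixed finite algebraic combination of $\{\nabla^l\phi^j_m\}$, $\{\nabla^l\xi_j\}$ and $1/\partial_{y_n}F_m$ with no loss of derivatives, and that the corresponding combination for the limit is continuous with respect to the mode of convergence at hand (including the subtle point that in the $C^{k,1}$ case products of weakly-$*$ convergent sequences need not converge weakly-$*$, which is why one must feed in the strong $W^{k,q}$ convergence to identify the limit). Once the formula is written down once and for all, each of the four items follows by inserting the matching mollifier estimate from step~(a); there is no new geometry beyond what Theorem~\ref{appr:mthm} already supplies, so I would present the proof as a sequence of short remarks, one per regularity class, referring back to the relevant displays in the proof of Theorem~\ref{appr:mthm}.
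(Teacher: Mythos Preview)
Your proposal is correct and matches the paper's own approach: the paper omits the proof entirely, stating only that it follows by extending the implicit-function estimates of Theorem~\ref{appr:mthm} to higher-order derivatives and invoking standard compactness theorems (Ascoli--Arzel\`a and weak-$\ast$ compactness), which is precisely the scheme you outline. Your use of the interpolation inequality $\|\cdot\|_{C^{0,\alpha'}}\lesssim\|\cdot\|_{C^0}^{1-\alpha'/\alpha}\|\cdot\|_{C^{0,\alpha}}^{\alpha'/\alpha}$ for item~(2) is a minor variant of the Ascoli--Arzel\`a route the paper alludes to, and is equally valid.
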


The proof of Theorem \ref{appr:dd22} can be easily carried out by extending the proof and estimates of Theorem \ref{appr:mthm} to higher order derivatives, and by using standard compactness theorems such as Ascoli-Arzel\'a's and weak-$\ast$ compactness. For this very reason, we decided to omit the proof.

\section{Auxiliary results}\label{secc:aux}

In this section, we state and prove a useful convergence property regarding the convolution of functions composed with a suitable family of bi-Lipschitz maps.

\begin{proposition}\label{prop:convoluz}
    \rm{Let $U\subset \R^{n-1}$ be a bounded domain, $K>0$ be a constant, and $\{\Psi_m\}_{m\in\N}$ be a family of bi-Lipschitz maps on $U$ such that 
    \begin{equation}\label{sup:psiem}
       \sup_{m\in\N}\|\nabla \Psi_m^{-1}\|_{L^\infty}\leq K \,,
    \end{equation}
 and there exists a bi-Lipschitz map $\Psi:U\to\Psi(U)$ such that
\begin{equation}\label{sium}
    \|\Psi_m-\Psi\|_{L^\infty(U)}\leq \frac{K}{m}\quad\text{for all $m\in\N$.}
\end{equation} 
Let $\mathcal{O}\subset \R^{n-1}$ open be such that $\Psi(U)\Subset \mathcal{O}$, and $\phi\in L^p(\mathcal{O})$ for some $p\in [1,\infty)$. Then
\begin{equation}\label{cia:cia}
 M_m(\phi)\circ \Psi_m\xrightarrow{m\to\infty} \phi\circ\Psi\quad\text{$\H^{n-1}$-a.e. in $U$ and in $L^p(U).$}  
\end{equation}
}
\end{proposition}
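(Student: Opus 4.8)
The plan is to split the claimed convergence \eqref{cia:cia} into two pieces by inserting the intermediate quantity $M_m(\phi)\circ\Psi$, writing
\begin{equation*}
M_m(\phi)\circ\Psi_m-\phi\circ\Psi = \big(M_m(\phi)\circ\Psi_m - M_m(\phi)\circ\Psi\big) + \big(M_m(\phi)\circ\Psi - \phi\circ\Psi\big).
\end{equation*}
The second term is the easy one: since $\Psi(U)\Subset\mathcal O$ and $\phi\in L^p(\mathcal O)$, for $m$ large the mollifications $M_m(\phi)$ are well defined on a fixed neighborhood of $\overline{\Psi(U)}$ and converge to $\phi$ both $\mathcal H^{n-1}$-a.e. and in $L^p$ on that neighborhood by standard properties of convolution; composing with the fixed bi-Lipschitz map $\Psi$ preserves a.e.-convergence, and preserves $L^p$-convergence because $\Psi^{-1}$ is Lipschitz, so the change of variables $y=\Psi(z)$ distorts the measure only by the bounded factor $|\det\nabla\Psi^{-1}|\le\|\nabla\Psi^{-1}\|_{L^\infty}^{n-1}$. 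Hence $\|M_m(\phi)\circ\Psi-\phi\circ\Psi\|_{L^p(U)}\to 0$ and a subsequence converges a.e.

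For the first term I would use equi-integrability. Fix $\delta>0$ and choose $g\in C_c(\mathcal O)$ with $\|\phi-g\|_{L^p(\mathcal O)}<\delta$; then $\|M_m(\phi)-M_m(g)\|_{L^p}\le\|\phi-g\|_{L^p}<\delta$ on the relevant neighborhood, and $M_m(g)\to g$ uniformly there. Using \eqref{sium} together with the uniform Lipschitz bound \eqref{sup:psiem} (which forces $\Psi_m(U)$ to stay inside a fixed compact subset of $\mathcal O$ once $m$ is large, since $\Psi(U)\Subset\mathcal O$), the continuous function $g$ satisfies $\|g\circ\Psi_m - g\circ\Psi\|_{L^\infty(U)}\to 0$ by uniform continuity of $g$ and \eqref{sium}. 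For the $\phi$-vs-$g$ discrepancy after composition with $\Psi_m$ one again invokes the change of variables $y=\Psi_m(z)$: $\int_U|(M_m(\phi)-M_m(g))\circ\Psi_m|^p\,dz = \int_{\Psi_m(U)}|M_m(\phi)-M_m(g)|^p\,|\det\nabla\Psi_m^{-1}|\,dy\le K^{n-1}\|\phi-g\|_{L^p}^p<K^{n-1}\delta^p$, uniformly in $m$. Assembling: for $m$ large, $\|M_m(\phi)\circ\Psi_m-\phi\circ\Psi\|_{L^p(U)}$ is bounded by a constant multiple of $\delta$ plus an $o(1)$ term, and since $\delta$ was arbitrary the $L^p$-convergence follows.

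For the pointwise statement, I would argue that at every Lebesgue point $x'$ of $\phi$ lying in the interior of $\mathcal O$ one has $M_m(\phi)(y'_m)\to\phi(\Psi(x'))$ whenever $y'_m\to\Psi(x')$ with $|y'_m-\Psi(x')|=O(1/m)$; this is the standard fact that mollification converges at Lebesgue points even along sequences of centers approaching the point faster than the mollification scale — more precisely, $|M_m(\phi)(y'_m)-\phi(\Psi(x'))| \le \fint_{B'_{1/m}(y'_m)}|\phi-\phi(\Psi(x'))| \le C\fint_{B'_{2/m}(\Psi(x'))}|\phi-\phi(\Psi(x'))|\to 0$ as long as $B'_{1/m}(y'_m)\subset B'_{2/m}(\Psi(x'))$, which holds for $m$ large by \eqref{sium}. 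Taking $y'_m=\Psi_m(x')$ and using that $\Psi$ maps sets of full measure to sets of full measure (being bi-Lipschitz) gives the $\mathcal H^{n-1}$-a.e. convergence in \eqref{cia:cia}.

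\textbf{Main obstacle.} The delicate point is not any single estimate but making the interchange of ``mollification scale $1/m$'' and ``perturbation scale $1/m$'' rigorous: the centers $\Psi_m(x')$ at which we evaluate $M_m(\phi)$ move at exactly the same rate as the mollification radius, so one cannot simply quote ``$M_m(\phi)\to\phi$ a.e.'' — one genuinely needs the Lebesgue-point argument with shifted centers, controlled via \eqref{sium}, and one needs \eqref{sup:psiem} to keep all the $\Psi_m(U)$ in a fixed compact subset of $\mathcal O$ so that $M_m(\phi)$ is even defined there. The equi-integrability/density reduction to continuous $g$ is what converts this pointwise control into the $L^p$ statement.
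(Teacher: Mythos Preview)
Your proposal is correct and follows essentially the same approach as the paper: the a.e.\ convergence via the Lebesgue-point argument with centers shifted by $O(1/m)$ is exactly the paper's argument (the paper uses radius $(K+1)/m$ rather than your $2/m$, which is the correct constant coming from \eqref{sium}, but this is immaterial), and your $L^p$ argument by density reduction to a continuous approximant combined with the change of variables controlled by \eqref{sup:psiem} is precisely the paper's three-term split, only organized with a preliminary two-term decomposition.
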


\begin{proof}
    Set
    \begin{equation*}
        U_\phi\coloneqq\big\{x'\in U\,:\, \Psi(x')\text{ is a Lebesgue point of } \phi\big\}
    \end{equation*}
  By Lebesgue differentiation theorem and since $\Psi$ is a bi-Lipschitz map, we have that $U_\phi$ is a subset of $U$ with full measure. Also, thanks to \eqref{sium} and the fact that $\Psi\big(U\big)\Subset \mathcal{O}$, we have that $\phi$ and $M_m(\phi)$ are well defined on a neighbourhood of $\Psi_m(U)$ for $m>m_0$ large enough. 
  Then, for all $x'\in U_\phi$ we have
  \begin{equation*}
      \begin{split} 
   \big|M_m(\phi)\big( \Psi_m (x')\big)  -\phi\big(\Psi(x')\big)\big|=\bigg|\int_{B'_{\frac 1 m}(\Psi_m(x'))}\Big[\phi(z')-\phi\big(\Psi(x')\big)\Big]\rho_m\big(\Psi_m(x')-z'\big)\,dz\bigg|
          \\
     \leq \big(\sup_{\R^{n-1}}\rho\big)\,m^{n-1}\int_{B'_{\frac{(K+1)}{m}}(\Psi(x'))}\big|\phi(z')-\phi\big(\Psi(x')\big) \big|\,dz'\xrightarrow{m\to\infty}0\,.
      \end{split}
  \end{equation*}
  Above we used the fact that $\Psi(x)$ is a Lebesgue point of $\phi$, and $B'_{\frac{1}{m}}(\Psi_m(x))\subset B'_{\frac{(K+1)}{m}}(\Psi(x))$ as a consequence of  \eqref{sium}.

  Now fix $\varepsilon>0$, and take a function $\widetilde{\phi} \in C^\infty_c(\R^{n-1})$ satisfying
  \begin{equation}\label{phi:epsi}
      \|\phi-\widetilde{\phi}\|^p_{L^p(\mathcal{O})}\leq \varepsilon\,.
  \end{equation}
  Standard properties of convolutions ensure that
  \begin{equation}\label{uuuu}
      \|M_m(\widetilde{\phi})-\widetilde{\phi}\|_{L^\infty(\mathcal{O})}\xrightarrow{m\to\infty}0\,.
  \end{equation}
Then we have
  \begin{equation}\label{trup}
      \begin{split}
          \int_U  & \big|M_m(\phi)\big( \Psi_m (x')\big)  -\phi\big(\Psi(x')\big)\big|^p\,dx'\leq c(p)\,\int_U\big| M_m(\phi-\widetilde{\phi})\big( \Psi_m (x')\big)\big|^p\,dx'
          \\
          & +c(p)\,\int_U \big|M_m(\widetilde{\phi})\big( \Psi_m (x')\big)- \widetilde{\phi}\big( \Psi(x')\big)\big|^p\,dx'+c(p)\,\int_U \big| \widetilde{\phi}\big( \Psi(x')-\phi\big( \Psi(x')\big)\big|^p\,dx'
      \end{split}
  \end{equation}
  By applying Jensen inequality, the change of variables $w'=\Psi_m (x')-z'$ and Fubini-Tonelli's Theorem we obtain
  \begin{equation*}
  \begin{split}
      \int_U\big| M_m(\phi-\widetilde{\phi})\big( \Psi_m (x')\big)\big|^p\,dx'\leq\int_U\int_{B'_{1/m}}\big| \phi\big( \Psi_m(x')-z'\big)-\widetilde{\phi}\big( \Psi_m(x')-z'\big)\big|^p \rho_m(z')\,dz'\,dx'
      \\
      \leq c(n)\,K^{n-1}\,\int_{\R^{n-1}}\rho_m(z')\,dz'\,\int_{\mathcal{O}}\big|\phi(w')-\widetilde{\phi}(w') \big|^p\,dw'\leq c(n)\,K^{n-1}\,\varepsilon\,,
      \end{split}
  \end{equation*}
  where we also used estimates \eqref{sup:psiem} and \eqref{phi:epsi}.

  Then, by using \eqref{sium} and \eqref{uuuu}, it is immediate to verify that
  \begin{equation*}
      \lim_{m\to\infty}\int_U \big|M_m(\widetilde{\phi})\big( \Psi_m (x')\big)- \widetilde{\phi}\big( \Psi(x')\big)\big|^p\,dx'= 0\,,
  \end{equation*}
  and finally, via a change of variables $y'=\Psi(x')$, and \eqref{phi:epsi} we get
  \begin{equation*}
      \int_U \big| \widetilde{\phi}\big( \Psi(x')-\phi\big( \Psi(x')\big)\big|^p\,dx'\leq  c(n)\,\|\nabla\Psi^{-1}\|_{L^\infty}^{n-1}\,\varepsilon\,.
  \end{equation*}
  Henceforth, by plugging the last three estimates into \eqref{trup}, we find
  \begin{equation*}
      \limsup_{m\to\infty}\int_U \big|M_m(\phi)\big( \Psi_m (x')\big)  -\phi\big(\Psi(x')\big)\big|^p\,dx'\leq c(n,p,L,\Psi)\,\varepsilon\,,
  \end{equation*}
  and thus \eqref{cia:cia} follows by the arbitrariness of $\varepsilon$.
\end{proof}

We close this section recalling a variant of Lebesgue dominated convergence Theorem which will be useful later on.

\begin{theorem}[Dominated convergence Theorem]\label{thm:dc}
    \rm{ Let $\{f_k\}_{k\in\N}$ be a sequence of measurable functions on $E\subset \R^{n-1}$ such that
\begin{enumerate}[label=(\roman*)]
    \item $f_k\to f$ almost everywhere on $E$;
    \item $|f_k|\leq g_k$ almost everywhere on $E$, with $g_k\in L^q(E)$ for some $q\in [1,\infty)$;
    \item there exists $g\in L^q(E)$ such that $g_k\to g$ a.e. on $E$, and $\int_E g_k^q\,dx\to \int_E g^q\,dx$.
\end{enumerate}   
Then $f\in L^q(E)$, and
\begin{equation*}
    \int_E |f_k-f|^q\,dx\to 0\,.
\end{equation*}
    }
\end{theorem}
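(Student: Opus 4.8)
The plan is to run the classical argument for the generalized dominated convergence theorem: apply Fatou's lemma to a nonnegative auxiliary sequence designed so that the ``bad'' term $\int_E|f_k-f|^q$ appears with a minus sign and can be extracted in the limit.

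First I would record two preliminary facts. Letting $k\to\infty$ in (i), (ii), (iii) gives $|f|\le g$ a.e. on $E$, so that $f\in L^q(E)$ because $g\in L^q(E)$; in particular $\int_E g^q\,dx<\infty$, which will be needed to cancel this quantity legitimately at the end. Second, using the elementary convexity inequality $(a+b)^q\le 2^{q-1}(a^q+b^q)$, valid for $a,b\ge 0$ and $q\ge 1$ (and the hypotheses already impose $q\in[1,\infty)$), together with $|f_k-f|\le|f_k|+|f|\le g_k+g$ a.e. (from (ii) and $|f|\le g$), I obtain
\[
0\le |f_k-f|^q\le 2^{q-1}\big(g_k^q+g^q\big)\qquad\text{a.e. on }E.
\]
Hence the functions $h_k:=2^{q-1}\big(g_k^q+g^q\big)-|f_k-f|^q$ are nonnegative, and by (i) and (iii) they satisfy $h_k\to 2^{q-1}(g^q+g^q)-0=2^q g^q$ a.e. on $E$.

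Now I apply Fatou's lemma to $\{h_k\}$:
\[
\int_E 2^q g^q\,dx=\int_E \liminf_{k\to\infty}h_k\,dx\le \liminf_{k\to\infty}\int_E h_k\,dx.
\]
On the right-hand side, $\int_E h_k\,dx=2^{q-1}\int_E g_k^q\,dx+2^{q-1}\int_E g^q\,dx-\int_E|f_k-f|^q\,dx$, and by (iii) the first summand converges to the finite limit $2^{q-1}\int_E g^q\,dx$; therefore
\[
\liminf_{k\to\infty}\int_E h_k\,dx=2^q\int_E g^q\,dx-\limsup_{k\to\infty}\int_E|f_k-f|^q\,dx.
\]
Combining the last two displays and cancelling the finite number $2^q\int_E g^q\,dx$ yields $\limsup_{k\to\infty}\int_E|f_k-f|^q\,dx\le 0$, and since this integral is nonnegative we conclude $\int_E|f_k-f|^q\,dx\to 0$, as claimed.

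I do not anticipate a genuine obstacle here; the delicate points are merely bookkeeping: the finiteness of $\int_E g^q\,dx$ (so that its cancellation is valid, guaranteed by (iii)), the restriction $q\ge 1$ needed for the convexity bound, and the identity $\liminf(a_k+b_k)=\lim a_k+\liminf b_k$ used with the convergent sequence $a_k=2^{q-1}\int_E g_k^q\,dx$. If one wished to avoid the explicit constant $2^{q-1}$, one could instead reduce to the ordinary dominated convergence theorem on the truncations $\{|f_k|\le g+g_k\}$, but the direct Fatou argument above is the shortest route.
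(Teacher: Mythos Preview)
Your proof is correct; this is the standard Fatou's-lemma argument for the generalized dominated convergence theorem. Note, however, that the paper does not actually prove this statement: it is simply recalled as a known variant of Lebesgue's theorem and then used later in the paper, so there is no proof in the paper to compare against.
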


\section{Transversality and graphicality}\label{sec:tras}

Throughout this section, we shall consider an isometry $T$ of $\R^n$, such that
\begin{equation}\label{isommm}
    Tx=\mathcal{R}x+x^0\,,\quad x\in \R^n\,,
\end{equation}
where $\mathcal{R}=\big\{\mathcal{R}_{ij}\big\}_{i,j=1}^n$ is an orthogonal matrix of $\R^n$, and $x^0\in \R^n$. Let 
\begin{equation*}
    \mathbf{n}=\mathcal{R}^t e_n\in \mathbb{S}^{n-1}\,,
\end{equation*}
 where $e_n$ denotes the $n$-th canonical vector of $\R^n$, i.e. $e_n=(0,\dots,0,1)$, $\mathcal{R}^t$ is the transpose matrix of $\mathcal{R}$, and $\mathbb{S}^{n-1}$ is the unit sphere on $\R^n$. \vspace{0.3cm}

Here we introduce the geometric notion of \textit{transversality}, which was already used in \cite{hofmann} in a wider sense. The definition given here suffices to our purposes.

\begin{definition}[Transversality] \label{def:transversal}\rm{ Let $\phi:U\to\R$ be a Lipschitz continuous function on $U\subset \R^{n-1}$ open. We say that a unit vector $\mathbf{n}\in \mathbb{S}^{n-1}$ is transversal to $\phi$ if there exists $\kappa>0$ such that}
\begin{equation*}
    \mathbf{n}\cdot\nu(x')\geq \kappa\quad\text{for $\H^{n-1}$-a.e. $x'\in U$},
\end{equation*}
    where $\nu$ denotes the outward normal to $G_\phi$ with respect to the subgraph $S_\phi$.
\end{definition}

The next proposition shows a very interesting feature: the transversality of  $\mathbf{n}\in \mathbb{S}^{n-1}$ to a Lipschitz  function $\phi$  is equivalent to 
the graphicality (and subgraphicality) of $\phi$ with respect to any  reference system having $e_n=\mathbf{n}$, that is  after performing a rotation of the axes through $\mathcal{R}$, the graph and subgraph of $\phi$ are mapped onto the graph and subgraph of another function $\psi$-- see identities \eqref{rotaz:z} below.

\begin{proposition}\label{prop:trass}
    \rm{
Let $U\subset \R^{n-1}$ be open, $\phi:\,U\to \R$ be a Lipschitz function, let $T$ be an isometry of the form \eqref{isommm}, and let $\mathbf n=  \mathcal{R}^te_n$. 
\\ (i) If there exists an $L$-Lipschitz function $\psi:V\to\R$  such that
\begin{equation}\label{rotaz:z}
 TG_\phi=G_{\psi}\quad\text{and}\quad 
TS_\phi=S_{\psi}\cap T(U\times \R)\,,
\end{equation}
then we have the transversality condition
\begin{equation}\label{quant:trans}
   \mathbf n\cdot\nu(x')\geq\frac{1}{\sqrt{1+L^2}}\quad\text{for $\H^{n-1}$-a.e. $x'\in U$.}
\end{equation}
(ii) Viceversa, if $\phi \in C^k(U)$ for some $k\in \N$ and \eqref{quant:trans} holds, then there exist $V\subset \R^{n-1}$ open, and a function $\psi\in C^k(V)$ such that $\|\nabla \psi\|_{L^\infty(V)}\leq L$ and \eqref{rotaz:z} holds true.
    }
\end{proposition}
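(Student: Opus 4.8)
The plan is to analyze the two implications separately, in both cases reducing everything to the explicit computation of the outward unit normal $\nu$ to a Lipschitz graph. Recall that for an $L$-Lipschitz function $\phi:U\to\R$, at every point $x'$ of differentiability one has
\[
\nu(x',\phi(x'))=\frac{(-\nabla\phi(x'),1)}{\sqrt{1+|\nabla\phi(x')|^2}},
\]
which is the outer normal to $G_\phi$ with respect to the subgraph $S_\phi$ (the last coordinate is positive, so $\nu$ points ``upwards'', away from $S_\phi$). This formula, valid $\H^{n-1}$-a.e.\ by Rademacher's theorem, will be the workhorse for both parts.

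\textbf{Part (i).} Assuming \eqref{rotaz:z}, the idea is to transport the normal field of $G_\phi$ to the normal field of $G_\psi$ via the isometry $T$. Since $T$ is a rigid motion with linear part $\mathcal R$, it maps $G_\phi$ to $G_\psi$ and $S_\phi$ (intersected with the relevant cylinder) to $S_\psi$, hence it maps the outer unit normal of $G_\phi$ at a point $x$ to the outer unit normal of $G_\psi$ at $Tx$; concretely $\nu_{G_\psi}(Tx)=\mathcal R\,\nu_{G_\phi}(x)$ for a.e.\ $x\in G_\phi$. Now compute $\mathbf n\cdot\nu_{G_\phi}(x)=(\mathcal R^t e_n)\cdot\nu_{G_\phi}(x)=e_n\cdot(\mathcal R\,\nu_{G_\phi}(x))=e_n\cdot\nu_{G_\psi}(Tx)$. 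But $\nu_{G_\psi}(Tx)$ is the last-coordinate-positive unit normal to the $L$-Lipschitz graph $G_\psi$, so its $e_n$-component equals $\tfrac{1}{\sqrt{1+|\nabla\psi|^2}}\geq\tfrac{1}{\sqrt{1+L^2}}$. This gives \eqref{quant:trans} immediately, with the stated constant. The only care needed is to check that the full-measure set of differentiability points of $\psi$ pulls back under $T$ to a full-measure subset of $U$ where $\phi$ is differentiable (true because $T$ restricted to graphs is bi-Lipschitz with bounded Jacobian), so that the a.e.\ statements match up.

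\textbf{Part (ii).} This is the substantive direction. Here $\phi\in C^k(U)$ and \eqref{quant:trans} holds. I would proceed constructively. Define $\Phi:U\to\R^n$ by $\Phi(x')=T(x',\phi(x'))=\mathcal R(x',\phi(x'))+x^0$; this is a $C^k$ embedding of $U$ onto $TG_\phi$. Write $\Phi(x')=(\Phi'(x'),\Phi_n(x'))\in\R^{n-1}\times\R$. The goal is to solve for the first $n-1$ coordinates: I claim the map $p:=\Phi':U\to\R^{n-1}$ is a $C^k$ diffeomorphism onto an open set $V$, and then $\psi:=\Phi_n\circ p^{-1}:V\to\R$ is the desired chart, with $TG_\phi=G_\psi$. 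To see that $p$ is a local $C^k$ diffeomorphism, compute its Jacobian: a tangent vector to $G_\phi$ at $(x',\phi(x'))$ of the form $(e_i,\partial_i\phi)$ is mapped by $\mathcal R$ to a tangent vector of $TG_\phi$, and the differential $dp_{x'}$ is the composition of $d\Phi_{x'}$ with the projection onto the first $n-1$ coordinates. The key point is that $dp_{x'}$ is invertible precisely because the tangent space $\mathcal R(T_{(x',\phi)}G_\phi)$ is transversal to the $e_n$-axis, and transversality of the tangent space is exactly the dual statement to \eqref{quant:trans} for the normal: since $\mathbf n\cdot\nu\geq\tfrac{1}{\sqrt{1+L^2}}>0$, the rotated normal $\mathcal R\nu$ has a strictly positive $e_n$-component, hence the rotated tangent hyperplane (its orthogonal complement) is a graph over $\R^{n-1}\times\{0\}$, i.e.\ projects isomorphically. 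A short determinant computation makes this quantitative and shows $\det dp_{x'}\neq 0$ for a.e.\ $x'$; but since $\phi\in C^k\subset C^1$ everywhere here, one gets it at \emph{every} point. Thus $p$ is an open $C^k$ map which is also injective — injectivity should be argued from the fact that $TG_\phi$ is a graph over its projection, which in turn follows because along any segment in the $e_n$-direction the transversality forces $\Phi_n\circ p^{-1}$ to be single-valued; alternatively, one can first prove $p$ is a local diffeomorphism and globally injective by a monotonicity/degree argument on the connected set $U$. Once $p:U\to V$ is a $C^k$ diffeomorphism, set $\psi=\Phi_n\circ p^{-1}\in C^k(V)$; by construction $TG_\phi=\{(p(x'),\Phi_n(x')):x'\in U\}=G_\psi$, and the subgraph identity $TS_\phi=S_\psi\cap T(U\times\R)$ follows since $T$ is an isometry preserving the two sides of the hypersurface and the ``below'' side in the new coordinates is exactly the subgraph of $\psi$ (the $e_n$-component of the outer normal being positive pins down which side is $S_\psi$). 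Finally, the Lipschitz bound $\|\nabla\psi\|_{L^\infty(V)}\leq L$: compute $\nabla\psi$ via the normal to $G_\psi$, which is $\pm\mathcal R\nu_{G_\phi}$; writing $\mathcal R\nu=(a',a_n)$ one has $\nabla\psi=-a'/a_n$, so $|\nabla\psi|^2=(1-a_n^2)/a_n^2=1/a_n^2-1$, and since $a_n=\mathbf n\cdot\nu\geq\tfrac{1}{\sqrt{1+L^2}}$ this yields $|\nabla\psi|^2\leq L^2$ as required.

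\textbf{Main obstacle.} The delicate point is the \emph{global} injectivity of the projection $p=\Phi'$ in Part (ii): transversality is a pointwise/infinitesimal condition and only gives that $p$ is a local diffeomorphism, whereas \eqref{rotaz:z} requires $TG_\phi$ to be a genuine graph over all of $V$. One must upgrade local to global — e.g.\ by observing that a line parallel to $e_n$ meets $TG_\phi=\Phi(U)$ in at most one point because along such a line the first $n-1$ new coordinates are frozen while $\Phi_n$ varies strictly monotonically relative to the (frozen) base point, a consequence of the uniform transversality; this requires a careful argument, possibly shrinking $U$ if it is not assumed connected or convex, but the statement as given ("there exist $V$ open and $\psi$") permits this. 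Everything else — the normal formula, the determinant computation, the Lipschitz bound — is routine linear algebra once this is in place.
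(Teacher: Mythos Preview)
Your Part (i) is exactly the paper's argument: transport the normal by $\mathcal R$ and read off the $e_n$-component of the normal to $G_\psi$.

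In Part (ii) your plan is correct but takes a different route from the paper, and the difference is precisely the point you flag as the ``main obstacle''. The paper does \emph{not} parametrize $TG_\phi$ and study the projection $p=\Phi'$; instead it introduces the scalar defining function $f(x)=x_n-\phi(x')$ on the full cylinder $U\times\R$, sets $\tilde f(z)=f(T^{-1}z)$ on $T(U\times\R)$, and computes
\[
\frac{\partial \tilde f}{\partial z_n}(z)=(-\nabla\phi(x'),1)\cdot\mathbf n
=\sqrt{1+|\nabla\phi(x')|^2}\,\big(\nu(x')\cdot\mathbf n\big)\ \ge\ \frac{1}{\sqrt{1+L^2}}
\]
for \emph{every} $z\in T(U\times\R)$ (not just on the graph), since the right-hand side depends only on $x'$. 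Strict monotonicity of $\tilde f$ along each vertical segment contained in $T(U\times\R)$ then gives at most one zero per segment, and the implicit function theorem upgrades this to a $C^k$ function $\psi$ on $V=\Pi(TG_\phi)$ with $\{\tilde f=0\}=G_\psi$ and $\{\tilde f<0\}=S_\psi\cap T(U\times\R)$. The Lipschitz bound $\|\nabla\psi\|_{L^\infty}\le L$ is then read off from the normal identity exactly as you do.

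So the trade-off is this: your projection approach is geometrically transparent but leaves global injectivity of $p$ as a separate step (which, as you note, requires an additional monotonicity or degree argument). The paper's defining-function approach absorbs that step for free, because the transversality bound holds on the whole $n$-dimensional cylinder rather than just on the $(n-1)$-dimensional graph; global graphicality is then immediate from one-variable monotonicity (at least when vertical slices of $T(U\times\R)$ are connected, e.g.\ $U$ convex, which is the case in all the applications). Your argument is fine, but if you want to remove the obstacle cleanly, switch to the level-set formulation.
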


Let us comment on this result. Part (i) states that if $G_\phi$ and $S_\phi$ are, respectively, the graph and subgraph of an $L$-Lipschitz function $\psi$ with respect to the reference system $z=(z',z_n)$ having $\mathbf{n}=e_n$, then the quantitative transversality estimate \eqref{quant:trans} holds true.

Part (ii) states the opposite in the $C^k$ case: the transversality condition \eqref{quant:trans} implies the graphicality and subgraphicality of $\phi$ with respect to the coordinate system $z=(z',z_n)$, and it also provides a Lipschitz estimate to $\psi$.

\vspace{0.4cm}

\noindent Before starting the proof, we need to introduce the so-called \textit{transition map} $\C$ from $\phi$ to $\psi$.
Under the same notation as Proposition \ref{prop:trass}, the transition map $\C:U\to V$ is defined as
\begin{equation*}
    \C x'\coloneqq\Pi\,T\big(x',\phi(x')\big)\,.
\end{equation*}
Here $\Pi:\R^n\to \R^{n-1}$ is the projection map $\Pi(x',x_n)=x'$. 
Observe that, when identities \eqref{rotaz:z} hold true, by the very definition of $\C$  we have the equation
\begin{equation*}
    T\big(x',\phi(x')\big)=\big(\C x',\psi\big(\C x'\big)\big)
\end{equation*}
In particular, this implies that $\C$ is a bijection, with inverse function $\C^{-1}:V\to U$ given by 
\begin{equation*}
    \C^{-1}z'=\Pi\,T^{-1}\big(z',\psi(z')\big)\,.
\end{equation*}

Also, since $\phi,\psi$ are Lipschitz continuous, then $\C$ is a bi-Lipschitz tranformation from $U$ to $V$.

\begin{figure}[h]
\centering
\includegraphics[width=0.8\textwidth]{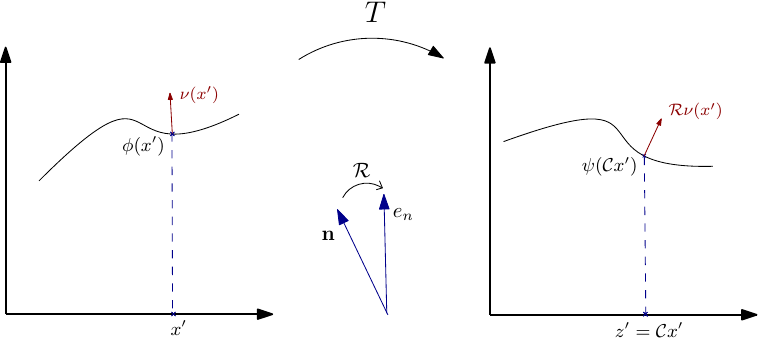}
\caption{}
\end{figure}

\begin{proof}[Proof of Proposition \ref{prop:trass}](i) By Rademacher's theorem, the normal vector $\nu$ to $G_\phi$ outward with respect to $S_\phi$ is well defined $\H^{n-1}$-almost everywhere, and thanks to \eqref{rotaz:z} and the definition of $\C$, we may write
\begin{equation}\label{normali}
    \nu(x')=\frac{(-\nabla \phi(x'),1)}{\sqrt{1+|\nabla\phi(x')|^2}}=\mathcal{R}^t\Bigg(\frac{(-\nabla \psi(\C x'),1)}{\sqrt{1+|\nabla\psi(\C x')|^2}}\Bigg)\quad\text{$\H^{n-1}$-a.e. $x'\in U$.}
\end{equation}
Therefore, since $\mathcal{R}\mathbf{n}=e_n$ and $|\nabla\psi|\leq L$, from \eqref{normali} we infer
\begin{equation}\label{normali:cdot}
    \mathbf{n}\cdot\nu(x')=e_n\cdot\mathcal{R}\nu(x')=\frac{1}{\sqrt{1+|\nabla\psi(\C x')|^2}}\geq\frac{1}{\sqrt{1+L^2}}\quad\text{for $\H^{n-1}$-a.e. $x'\in U$.}
\end{equation}
(ii) Assume $\phi\in C^k(U)$ and that \eqref{quant:trans} is in force. 

Consider the $C^k$-function $f:U\times \R\to \R$, defined as $f(x)\coloneqq x_n-\phi(x')$, so that
\begin{equation}\label{grafo:tempp}
    \{f=0\}=G_\phi\quad\text{and}\quad \{f<0\}=S_\phi\,.
\end{equation}

Now let $\tilde{f}:T(U\times \R)\to \R$ be the function defined as $ \tilde{f}(z)=f(x)$ for $z=Tx$.
Recalling $\mathcal{R}\mathbf{n}=e_n$, via the chain rule we compute
\begin{equation}\label{belll}
    \frac{\partial \tilde{f}(z)}{\partial z_n}=\mathcal{R}_{nn}-\sum_{k=1}^{n-1}\frac{\partial \phi(x')}{\partial x'_k}\,\mathcal{R}_{nk}=(-\nabla \phi(x'),1)\cdot \mathbf{n}\,.
\end{equation}
Thus, from expression \eqref{normali} of $\nu(x')$ and estimate \eqref{quant:trans}, we obtain
\begin{equation}\label{coer:ftilde}
    \frac{\partial \tilde{f}(z)}{\partial z_n}=\sqrt{1+|\nabla\phi(x')|^2}\,\nu(x')\cdot\mathbf{n}\geq\frac{1}{\sqrt{1+L^2}}\quad\text{for $z=Tx$.}
\end{equation}
Therefore, owing to \eqref{coer:ftilde} and the implicit function theorem, we immediately infer the existence  of a function $\psi\in C^k(V)$, with $V\subset \R^{n-1}$ open, such that
\begin{equation*}
    \{\tilde{f}=0\}=G_\psi\quad\text{and}\quad  \{\tilde{f}=0\}=S_\psi\cap T(U\times \R).
\end{equation*}
Thereby, \eqref{rotaz:z} follows from the very definition of $\tilde{f}$ and \eqref{grafo:tempp}.
 
Finally, by using \eqref{normali:cdot} we infer that 
$|\nabla\psi(\C x')|\leq L$ for all $x'\in U$, whence $\|\nabla \psi\|_{L^\infty(V)}\leq L$ since the transition map $\C$ is a bijection.

\end{proof}

\begin{remark}\label{bel:remarkkk}\rm{
    We point out that inequality \eqref{coer:ftilde}, when evaluated  at points $z=T\big(x',\phi(x')\big)$, holds true if $\phi$ and $\psi$ are merely Lipschitz continuous and satisfy \eqref{rotaz:z}.
    
    Indeed, since $\C$ is a bi-Lipschitz map, by Rademacher's Theorem and the chain rule we may perform the same computations as \eqref{belll}-\eqref{coer:ftilde} and get
    \begin{equation}\label{coer:ff}
       \mathcal{R}_{nn}-\sum_{k=1}^{n-1}\frac{\partial \phi(x')}{\partial x'_k}\,\mathcal{R}_{nk}\geq\frac{1}{\sqrt{1+L^2}}\quad\text{for $\H^{n-1}$-a.e. $x'\in U$.}
    \end{equation}
    }
\end{remark}

By making use of this information, we now show that the transversality condition \eqref{quant:trans} is inherited by the regularized function $M_m(\phi)$. This is the content of the following proposition

\begin{proposition}\label{pr:rotconv}
    \rm{Let $U,\,V\subset \R^{n-1}$ be open bounded , let $T$ be an isometry of the form \eqref{isommm}, and $\mathbf n=\mathcal{R}^t e_n$. Let $\phi:\,U\to \R$ and $\psi:\,V\to\R$ be $L$-Lipschitz functions satisfying \eqref{rotaz:z}. If we set
    \begin{equation*}
        U_m\coloneqq\big\{x'\in U\,:\,\mathrm{dist}(x',\partial U)>\tfrac{1}{m}\big\}
    \end{equation*}
and for some sequence $\{c_m\}_{m\in \N}\subset \R$ we define
\begin{equation*}
    \phi_m(x')\coloneqq M_m(\phi)(x')+c_m\quad\text{for $x'\in U_m$,} 
\end{equation*}
then $\phi_m$ is $L$-Lipschitz continuous on $U_m$ and
\begin{equation}\label{Lm:conv}
    \|\phi_m-\phi\|_{L^\infty(U_m)}\leq \frac{L}{m}+|c_m|\,.
\end{equation}
In addition, we have the transversality condition
\begin{equation}\label{quasi:transM}
    \mathcal{R}_{nn}-\sum_{k=1}^{n-1}\frac{\partial \phi_m}{\partial x'_k}(x')\mathcal{R}_{nk}=\big(-\nabla \phi_m(x'),1\big)\cdot\mathbf n\geq\frac{1}{\sqrt{1+L^2}}\quad\text{for all $x'\in U_m$,}
\end{equation}
and  
\begin{equation}\label{quant:transM}
    \mathbf{n}\cdot \nu_m(x')\geq \frac{1}{1+L^2}\quad\text{for all $x'\in U_m$\,,}
\end{equation}
where $\nu_m$ is the outward unit normal to $G_{\phi_m}$ with respect to the subgraph $S_{\phi_m}$.}
\end{proposition}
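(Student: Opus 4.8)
The statement splits into three parts: the Lipschitz bound and $L^\infty$-estimate on $\phi_m$, the infinitesimal transversality \eqref{quasi:transM}, and the normal transversality \eqref{quant:transM}. The plan is to treat them in that order, since the third follows almost formally from the second.

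First I would dispense with the elementary estimates. Since $\phi$ is $L$-Lipschitz and $\rho_m$ is a probability density, the convolution $M_m(\phi)$ inherits the $L$-Lipschitz bound on $U_m$ in the usual way (for $x',y'\in U_m$ write $M_m(\phi)(x')-M_m(\phi)(y')=\int[\phi(x'-z')-\phi(y'-z')]\rho_m(z')\,dz'$ and estimate the integrand by $L|x'-y'|$), and adding the constant $c_m$ does not change the Lipschitz constant. For \eqref{Lm:conv}, again use that $\rho_m$ is supported in $B'_{1/m}$ and has unit mass: $|M_m(\phi)(x')-\phi(x')|\le\int_{B'_{1/m}}|\phi(x'-z')-\phi(x')|\rho_m(z')\,dz'\le L/m$, and then the triangle inequality with $|c_m|$ gives the claim.

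The heart of the matter is \eqref{quasi:transM}. Here the key observation is that the left-hand side of \eqref{quasi:transM} is an \emph{affine} functional of $\nabla\phi_m(x')$, namely $\mathcal R_{nn}-\nabla\phi_m(x')\cdot(\mathcal R_{n1},\dots,\mathcal R_{n,n-1})$, and differentiation commutes with convolution, so $\nabla\phi_m(x')=M_m(\nabla\phi)(x')=\int_{B'_{1/m}}\nabla\phi(x'-z')\rho_m(z')\,dz'$ is an average (with respect to the probability measure $\rho_m(z')\,dz'$) of the values $\nabla\phi(x'-z')$. Therefore the left-hand side of \eqref{quasi:transM} equals the same average of the quantities $\mathcal R_{nn}-\sum_k\partial_{x'_k}\phi(x'-z')\mathcal R_{nk}$. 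By Remark \ref{bel:remarkkk} — specifically inequality \eqref{coer:ff}, which holds for merely Lipschitz $\phi,\psi$ satisfying \eqref{rotaz:z} — this integrand is $\ge(1+L^2)^{-1/2}$ for $\H^{n-1}$-a.e.\ point of $U$; since for $x'\in U_m$ the ball $B'_{1/m}(x')$ lies in $U$, almost every $x'-z'$ in the integration range is such a good point, and averaging a function bounded below by $(1+L^2)^{-1/2}$ against a probability measure preserves that lower bound. This gives \eqref{quasi:transM} for \emph{every} $x'\in U_m$ (not merely a.e.), because $\phi_m$ is smooth so $\nabla\phi_m$ is defined pointwise. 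The one technical point to be careful about is that Remark \ref{bel:remarkkk} is stated for the derivative of $\phi$ itself and uses \eqref{coer:ff}; I should make sure that the "a.e." exceptional set, which has $\H^{n-1}$-measure zero, indeed does not affect the integral $\int_{B'_{1/m}}(\cdots)\rho_m(z')\,dz'$ — and it does not, since $\rho_m(z')\,dz'$ is absolutely continuous with respect to Lebesgue measure.

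Finally, \eqref{quant:transM} follows from \eqref{quasi:transM} by the same computation as in the proof of Proposition \ref{prop:trass}(i): the outward unit normal is $\nu_m(x')=(-\nabla\phi_m(x'),1)/\sqrt{1+|\nabla\phi_m(x')|^2}$, so $\mathbf n\cdot\nu_m(x')=\big((-\nabla\phi_m(x'),1)\cdot\mathbf n\big)/\sqrt{1+|\nabla\phi_m(x')|^2}$; the numerator is $\ge(1+L^2)^{-1/2}$ by \eqref{quasi:transM}, and the denominator is $\le\sqrt{1+L^2}$ since $\phi_m$ is $L$-Lipschitz, giving the bound $(1+L^2)^{-1}$. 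I expect the main (though still mild) obstacle to be bookkeeping the measure-zero exceptional sets and the domain inclusions $B'_{1/m}(x')\subset U$ carefully enough that the averaging argument for \eqref{quasi:transM} is rigorous; the algebra itself is routine.
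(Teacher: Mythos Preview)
Your proposal is correct and follows essentially the same approach as the paper: the paper likewise obtains \eqref{quasi:transM} by averaging \eqref{coer:ff} against $\rho_m$ (phrased as ``multiply \eqref{coer:ff} by $\rho_m(x_0'-x')$ and integrate''), proves the $L$-Lipschitz bound and \eqref{Lm:conv} by the same convolution estimates you wrote, and derives \eqref{quant:transM} from \eqref{quasi:transM} by dividing by $\sqrt{1+|\nabla\phi_m|^2}\le\sqrt{1+L^2}$. The only difference is the order of presentation.
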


\begin{proof}
   Let $x'_0\in U_m$. By multiplying \eqref{coer:ff} with $\rho_m(x'_0-x')$ and integrating in $x'$ we immediately obtain 
   \begin{equation*}
       \mathcal{R}_{nn}-\sum_{k=1}^{n-1}\frac{\partial M_m(\phi)(x_0')}{\partial x'_k}\,\mathcal{R}_{nk}\geq\frac{1}{\sqrt{1+L^2}}\quad\text{for all $x'_0 \in U_m$,}
   \end{equation*}
   and  \eqref{quasi:transM} holds true.

Next, from the $L$-Lipschitz continuity of $\phi$, we have
\begin{equation*}
\begin{split}
    \big|M_m(\phi)(x')-M_m(\phi)(y')\big| & \leq\int_{\R^{n-1}}\big|\phi(x'-z')-\phi(y'-z')\big|\,\rho_m(z')\,dz'
    \\
    &\leq L\,|x'-y'|\,\int_{\R^{n-1}}\rho_m(z')\,dz'=L\,|x'-y'|
    \end{split}
\end{equation*}
for all $x',y'\in U_m$, 
 hence $\phi_m$ is $L$-Lipschitz continuous as well. From this and \eqref{quasi:transM}, we get
\begin{equation*}
    \mathbf{n}\cdot \nu_m(x')=\mathbf{n}\cdot\frac{\big(-\nabla M_m(\phi)(x'),1\big)}{\sqrt{1+|\nabla M_m(\phi)(x')|^2}}\geq\frac{1}{1+L^2}\quad\text{for all $x' \in U_m$,}
\end{equation*}
that is \eqref{quant:transM}. Next, since $\rho_m$ is radially symmetric and $\phi$ is $L$-Lipschitz continuous, for all $x'\in U_m$ we get 
\begin{align*}
    \big|M_m(\phi)(x')-\phi(x')\big| & \leq\int_{B'_{1/m}}\big| \phi(x'+y')-\phi(x')\big|\,\rho_m(y')\,dy'
    \\
    & \leq \int_{B'_{1/m}}L\,|y'|\,\rho_m(y')\,dy'\leq \frac{L}{m}\,,
\end{align*}
and thus \eqref{Lm:conv} follows.
\end{proof}

Since we have proven that the regularized function $M_m(\phi)$ satisfies the transversality condition, Part (ii) of Proposition \ref{prop:trass} entails its ``graphicality" with respect to the coordinate system having $\mathbf{n}=e_n$.

\begin{proposition}\label{pr:rotconv1}\rm{
    Under the same assumptions of Proposition \ref{pr:rotconv}, there exist $V_m\subset \R^{n-1}$ open bounded such that 
\begin{equation}\label{VmquasiV}
    \mathrm{dist}_\H(V_m,V)\leq \frac{2\sqrt{1+L^2}}{m}+|c_m|\,,
\end{equation}
 and a function $\psi_m\in C^\infty(V_m)$  satisfying 
 \begin{equation}\label{nbvmm}
     \|\nabla \psi_m\|_{L^\infty(V_m)}\leq 2(1+L^2)\,,
 \end{equation}
\begin{equation}\label{rotaz:zM}
 TG_{\phi_m}=G_{\psi_m}\quad\text{and}\quad 
TS_{\phi_m}=S_{\psi_m}\cap T\big(U_m\times \R\big)\,.
\end{equation}
If in addition $V_m\cap V\neq \emptyset$, then 
\begin{equation}\label{c:convrot}
    \|\psi_m-\psi\|_{L^\infty(V_m\cap V)}\leq \frac{L(1+L)}{m}+(1+L)\,|c_m|,
\end{equation}
and if  $\C_m$ is the transition map of $\phi_m$, we have that
\begin{equation}\label{uoaaa0}
    \|\C_m-\C\|_{L^\infty(U_m)}+\|\C^{-1}_m-\C^{-1}\|_{L^\infty(V_m\cap V)}\leq c(n)\,(1+L^2)\Big(\frac{1}{m}+|c_m|\Big)\,.
\end{equation}
}
\end{proposition}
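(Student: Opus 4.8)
\textbf{Proof proposal for Proposition \ref{pr:rotconv1}.}

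The plan is to apply Part (ii) of Proposition \ref{prop:trass} to the regularized function $\phi_m$, using the transversality condition \eqref{quant:transM} established in Proposition \ref{pr:rotconv}, and then to track all the relevant quantitative estimates through the transition maps. First I would invoke Proposition \ref{prop:trass}(ii): since $\phi_m \in C^\infty(U_m)$ and satisfies \eqref{quasi:transM}, hence the transversality estimate \eqref{quant:transM} with constant $\kappa = \tfrac{1}{1+L^2}$, there exists an open set $V_m \subset \R^{n-1}$ and $\psi_m \in C^\infty(V_m)$ with $\|\nabla\psi_m\|_{L^\infty(V_m)} \leq L'$ satisfying \eqref{rotaz:zM}, where $L'$ is the Lipschitz bound coming from the transversality constant. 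To pin down \eqref{nbvmm}, I would redo the computation \eqref{normali:cdot}: from $\mathbf{n}\cdot\nu_m = (1+|\nabla\psi_m(\C_m x')|^2)^{-1/2} \geq \tfrac{1}{1+L^2}$ we get $|\nabla\psi_m| \leq \sqrt{(1+L^2)^2 - 1} = \sqrt{L^2(L^2+2)} \leq 2(1+L^2)$ (using $L^2+2 \le 2(1+L^2)$ and $L \le 1+L^2$, or a similarly crude bound), which is exactly \eqref{nbvmm}.

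Next I would establish the Hausdorff estimate \eqref{VmquasiV}. Since $V_m = \C_m(U_m)$ and $V = \C(U)$, and $\C_m x' = \Pi T(x',\phi_m(x'))$, $\C x' = \Pi T(x',\phi(x'))$, the map $\Pi T$ is $1$-Lipschitz (it is a projection composed with an isometry), so $|\C_m x' - \C x'| \leq |\phi_m(x') - \phi(x')| \leq \tfrac{L}{m} + |c_m|$ by \eqref{Lm:conv} — actually I should be careful and note $|\Pi T(x',s) - \Pi T(x',t)| \le |s-t|$ since the difference is $\Pi\mathcal{R}(0',s-t)$ which has norm $\le |s-t|$. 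Combined with the fact that $U_m$ exhausts $U$ with $\mathrm{dist}_\H(U_m, U) \le \tfrac1m$ and that $\C$ is bi-Lipschitz with $\|\nabla\C^{-1}\|$, $\|\nabla\C\| \lesssim \sqrt{1+L^2}$, a triangle-inequality argument $\mathrm{dist}_\H(V_m, V) \le \mathrm{dist}_\H(\C_m(U_m), \C(U_m)) + \mathrm{dist}_\H(\C(U_m), \C(U))$ yields the bound $\tfrac{2\sqrt{1+L^2}}{m} + |c_m|$ after absorbing constants (the $\sqrt{1+L^2}$ factor comes from the Lipschitz constant of $\C$ acting on the $\tfrac1m$-neighborhood gap, and the direct $\C_m$ vs $\C$ discrepancy contributes the $\tfrac{L}{m} + |c_m| \le \sqrt{1+L^2}(\tfrac1m + |c_m|)$ piece; one has to be slightly careful to land exactly on the stated constant).

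Then, on $V_m \cap V$, I would prove \eqref{c:convrot} by comparing $\psi_m$ and $\psi$ through their defining graph relations. For $z' \in V_m \cap V$, write $z' = \C_m x'_m = \C x'$ with $x'_m \in U_m$, $x' \in U$; from $TG_{\phi_m} = G_{\psi_m}$ and $TG_\phi = G_\psi$ we have $\psi_m(z') = \Pi_n T(x'_m, \phi_m(x'_m))$ and $\psi(z') = \Pi_n T(x', \phi(x'))$ where $\Pi_n$ is projection onto the last coordinate. Using the Lipschitz continuity of $\phi$, $\psi$, the estimate \eqref{Lm:conv}, and controlling $|x'_m - x'|$ via $|\C_m x'_m - \C_m x'| = |\C x' - \C_m x'| \le \tfrac{L}{m} + |c_m|$ together with $\|\nabla\C_m^{-1}\|_{L^\infty} \le c(n)(1+L^2)$ (which follows from \eqref{nbvmm} and the explicit form of $\C_m^{-1}$), one gets $|x'_m - x'| \le c(n)(1+L^2)(\tfrac1m + |c_m|)$ and hence $|\psi_m(z') - \psi(z')| \le L|x'_m - x'| + \|\phi_m - \phi\|_\infty \le \tfrac{L(1+L)}{m} + (1+L)|c_m|$ after a careful bookkeeping of the constants (the $(1+L)$ factors arise from combining the Lipschitz constant $L$ of $\psi$ or $\phi$ with the displacement in the base point). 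Finally, \eqref{uoaaa0} follows by the same scheme applied directly to $\C_m - \C$ and $\C_m^{-1} - \C^{-1}$: the first is immediate from $|\C_m x' - \C x'| \le \|\phi_m - \phi\|_{L^\infty(U_m)} \le \tfrac{L}{m} + |c_m|$; the second uses $\C_m^{-1} z' = \Pi T^{-1}(z', \psi_m(z'))$, $\C^{-1} z' = \Pi T^{-1}(z', \psi(z'))$, so $|\C_m^{-1} z' - \C^{-1} z'| \le |\psi_m(z') - \psi(z')|$, and then one inserts \eqref{c:convrot}, absorbing everything into a constant $c(n)(1+L^2)$.

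\textbf{Main obstacle.} The conceptual content — that transversality is preserved and passes to graphicality — is already packaged in Propositions \ref{prop:trass} and \ref{pr:rotconv}, so the essential difficulty here is entirely bookkeeping: tracking how the Lipschitz constants of $\C_m, \C_m^{-1}$ (which degrade like $1+L^2$, not like $L$, because of the quadratic loss in \eqref{quant:transM} versus \eqref{quant:trans}) propagate through the compositions, and making sure the displacement of base points $|x'_m - x'|$ is controlled with the correct power of $(1+L^2)$ so that the final constants match the stated ones. The one genuinely delicate point is that the transversality constant for $\phi_m$ is $\tfrac{1}{1+L^2}$ rather than $\tfrac{1}{\sqrt{1+L^2}}$ — a quadratic rather than linear dependence — and this is what forces the $2(1+L^2)$ gradient bound in \eqref{nbvmm} and the $c(n)(1+L^2)$ factors throughout; one must resist the temptation to use the sharper linear bound, which is false for the mollified function's graph in the rotated frame.
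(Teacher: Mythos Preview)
Your approach matches the paper's almost exactly: apply Proposition~\ref{prop:trass}(ii) with the weaker transversality constant $\tfrac{1}{1+L^2}$ from \eqref{quant:transM}, read off \eqref{nbvmm}, and then chase the transition maps for the quantitative estimates. Your derivation of \eqref{VmquasiV} via the decomposition $\mathrm{dist}_\H(\C_m(U_m),\C(U_m)) + \mathrm{dist}_\H(\C(U_m),\C(U))$ is precisely what the paper does, and your treatment of \eqref{uoaaa0} is also the same.

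The one place you diverge is \eqref{c:convrot}. You write $z' = \C_m x'_m = \C x'$ and try to control $|x'_m - x'|$ through $\|\nabla\C_m^{-1}\|_{L^\infty}$; but $\|\nabla\C_m^{-1}\|_{L^\infty}$ is only bounded by something of order $(1+L^2)$ (from \eqref{nbvmm}), so your route yields a bound of order $(1+L)(1+L^2)\big(\tfrac1m + |c_m|\big)$ rather than the stated $\tfrac{L(1+L)}{m}+(1+L)|c_m|$. There is also a minor domain issue: for $\C_m x'$ to make sense you need $x' = \C^{-1}z' \in U_m$, which is not automatic. The paper sidesteps both problems with a cleaner trick: take $y' \in V_m\cap V$, write $y' = \C_m x'$ with $x' \in U_m$ (so both $\C x'$ and $\C_m x'$ are defined), and observe directly from \eqref{Lm:conv} and the fact that $T$ is an isometry that
\[
|\C_m x' - \C x'| \le \tfrac{L}{m}+|c_m|\quad\text{and}\quad |\psi_m(\C_m x') - \psi(\C x')| \le \tfrac{L}{m}+|c_m|\,.
\]
Then one uses the $L$-Lipschitz continuity of $\psi$ (not of $\C_m^{-1}$) in the triangle inequality
\[
|\psi(y')-\psi_m(y')| \le |\psi(y')-\psi(\C x')| + |\psi(\C x') - \psi_m(\C_m x')| \le L\big(\tfrac{L}{m}+|c_m|\big) + \tfrac{L}{m}+|c_m|\,,
\]
which gives the stated constant exactly. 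So your strategy is sound, but for the sharp constant in \eqref{c:convrot} you should compare via $\psi$ rather than via the preimage points.
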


\begin{proof}
    From the results of Part (ii) of Proposition \ref{prop:trass} and \eqref{quant:transM}, there exist $V_m\subset \R^{n-1}$ open bounded, and a function $\psi_m\in C^\infty(V_m)$ such that \eqref{rotaz:zM} holds. Also, owing to \eqref{quant:trans}, we immediately obtain \eqref{nbvmm}.

Now we recall that the transition map of $\phi_m$ is  the function $\C_m:\, U_m\to V_m$ defined as $\C_m x'=\Pi\,T\big(x',\phi_m(x')\big)$, and for all $x'\in U_m$ we have
\begin{equation*}
T\big(x',\phi(x')\big)=\big(  \C x',\psi(\C x')\big)\quad\text{and}\quad T\big(x',\phi_m(x')\big)=\big(  C_m x',\psi_m(\C_m x')\big)\,,
\end{equation*}
so that from \eqref{Lm:conv} we infer
\begin{equation*}
    \begin{split}
       |c_m|+ \frac{L}{m}\geq |\phi_m(x')-\phi(x')|=\big|\big(x',\phi_m(x')\big)-\big(x',\phi(x')\big)\big|= \big|\big(\C_m x',\psi_m(\C_m x')\big)-\big(\C x',\psi(\C x') \big)\big|\,,
    \end{split}
\end{equation*}
for all $x'\in U_m$. In particular
\begin{equation}\label{uoaaa1}
    \begin{dcases}
        |\C_m x'-\C x'|\leq \frac{L}{m}+|c_m|
        \\
        \big|\psi_m\big(\C_m x'\big)-\psi\big(\C x'\big)|\leq \frac{L}{m}+|c_m|
    \end{dcases}
    \quad\text{for all $x'\in U_m$}
\end{equation}
The first inequality in \eqref{uoaaa1} entails $\mathrm{dist}_\H\big(V_m, \C (U_m)\big)\leq \frac L m+|c_m|$.

On the other hand,  by definition of $U_m$, for any $x'\in U$ we may find $x'_m\in U_m$ such that $|x'-x'_m|\leq \frac{1}{m}$. Since $\Pi$ and $T$ are $1$-Lipschitz continuous, and $\phi$ is $L$-Lipschitz continuous, it follows that
\begin{equation*}
    |\C x'-\C x'_m|\leq \big|\big( x',\phi(x')\big)-\big( x'_m,\phi(x'_m)\big)\big|\leq \frac{\sqrt{1+L^2}}{m}\,,
\end{equation*}
 which implies $\mathrm{dist}_\H\big(\C (U_m),V\big)\leq \frac{\sqrt{1+L^2}}{m}$ since $\C(U)=V$. Hence, by using the triangle inequality we get
 \begin{equation*}
     \mathrm{dist}_\H\big(V_m,V\big)\leq \mathrm{dist}_\H\big(V_m,\C (U_m)\big)+\mathrm{dist}_\H\big(\C (U_m),V\big)\leq \frac{2\sqrt{1+L^2}}{m}+|c_m|\,,
 \end{equation*}
 that is \eqref{VmquasiV}. 
 
 Next, on assuming that $V_m\cap V\neq \emptyset$, and $\C_m$ being a bijection between $U_m$ and $V_m$, we may take a point $y'\in V_m\cap V$ such that $y'=\C_m x'$ for some $x'\in U_m$ From \eqref{uoaaa1} we find
 \begin{equation*}
     |\C_m x'-\C x'|=|y'-\C\C_m^{-1} y'|\leq \frac{L}{m}+|c_m|\,,
 \end{equation*}
and
\begin{equation*}
    \big|\psi\big( \C x'\big)-\psi_m\big(\C_m x'\big)\big|=\big| \psi\big( \C\C^{-1}_m y'\big)-\psi_m(y')\big|\leq \frac{L}{m}+|c_m|\,.
\end{equation*}
By using these two estimates and the $L$-Lipschitz continuity of $\psi$, we obtain
\begin{equation*}
    \begin{split}
        |\psi(y') & -\psi_m(y')|  \leq |\psi(y')-\psi(\C\C_m^{-1}y')|+ |\psi(\C\C_m^{-1}y')-\psi_m(y')|
        \\
        & \leq L\,|y'-\C\C_m^{-1} y'|+\frac{L}{m}+|c_m|\leq \frac{L(1+L)}{m}+(1+L)\,|c_m|\quad\text{for all $y'\in V_m\cap V$,}
    \end{split}
\end{equation*}
that is \eqref{c:convrot}. Finally, by making use of \eqref{c:convrot} and a similar argument as in the proof of \eqref{uoaaa1}, we obtain \eqref{uoaaa0}.
\end{proof}

The next proposition shows that if $\phi\in W^{2,q}$, then $\psi\in W^{2,q}$ as well. Namely, graphicality preserves Sobolev second-order regularity for Lipschitz functions.

\begin{proposition}\label{yessa}
    Under the same assumptions of Propositions \ref{pr:rotconv}-\ref{pr:rotconv1}, if in addition $\phi\in W^{2,q}_{loc}(U)$ for some $q\in [1,\infty]$, then $\psi\in W^{2,q}_{loc}(V)$.
\end{proposition}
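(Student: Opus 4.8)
The strategy is to realize $\psi$ locally as a composition of $\phi$ with the smooth transition map $\mathcal{C}^{-1}$, and then to bootstrap the $W^{2,q}$ regularity through the chain rule. Recall from Part (ii) of Proposition \ref{prop:trass} that, since $\phi\in W^{2,q}_{loc}(U)\subset C^1(U)$ (by Sobolev embedding, or simply recalling $\phi\in C^1$ is enough to run the implicit function theorem pointwise with $C^1$ data and then upgrade), we obtain $\psi$ via the relation $T(x',\phi(x'))=(\mathcal{C}x',\psi(\mathcal{C}x'))$. Projecting onto the first $n-1$ coordinates and the last one separately gives $\mathcal{C}x' = \Pi T(x',\phi(x'))$ and $\psi(z') = (T(\mathcal{C}^{-1}z',\phi(\mathcal{C}^{-1}z')))_n$ for $z'\in V$. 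Since $T$ is affine and $\Pi$ is linear, $\mathcal{C}$ inherits the regularity of $x'\mapsto(x',\phi(x'))$, hence $\mathcal{C}\in W^{2,q}_{loc}(U;\R^{n-1})$, and likewise $\psi$ will be $W^{2,q}_{loc}$ once we know $\mathcal{C}^{-1}\in W^{2,q}_{loc}(V;\R^{n-1})$.

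\textbf{Key steps.} First I would establish that $\mathcal{C}:U\to V$ is a bi-Lipschitz homeomorphism (already noted in the excerpt) whose Jacobian $\nabla\mathcal{C}$ is bounded away from being singular: indeed $\mathcal{C}x' = \mathcal{R}'x' + (\text{affine in }x') + (\text{column involving }\phi(x'))$ where one computes $\partial_j(\mathcal{C}x')_i = \mathcal{R}_{ij} + \mathcal{R}_{in}\partial_j\phi(x')$, and the transversality estimate \eqref{coer:ff}, namely $\mathcal{R}_{nn}-\sum_k \mathcal{R}_{nk}\partial_k\phi \geq (1+L^2)^{-1/2}>0$, is exactly the statement that $\det\nabla\mathcal{C}$ stays bounded below in absolute value (this is the cofactor-expansion computation showing $\det\nabla\mathcal{C} = \mathcal{R}_{nn}-\sum_k\mathcal{R}_{nk}\partial_k\phi$ up to sign, using orthogonality of $\mathcal{R}$). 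Consequently $\nabla\mathcal{C}\in W^{1,q}_{loc}\cap L^\infty$ with $(\nabla\mathcal{C})^{-1}\in L^\infty$. Second, I would invoke the standard fact that if a bi-Lipschitz map $\mathcal{C}$ has $\nabla\mathcal{C}\in W^{1,q}_{loc}$ and $(\nabla\mathcal{C})^{-1}\in L^\infty$, then $\mathcal{C}^{-1}\in W^{2,q}_{loc}$: differentiating the identity $\nabla\mathcal{C}^{-1}(z') = (\nabla\mathcal{C}(\mathcal{C}^{-1}z'))^{-1}$ formally gives $\nabla^2\mathcal{C}^{-1}$ as a product of $(\nabla\mathcal{C})^{-1}$ (bounded), $(\nabla^2\mathcal{C})\circ\mathcal{C}^{-1}$ (in $L^q_{loc}$, since $\mathcal{C}^{-1}$ is Lipschitz the composition preserves $L^q_{loc}$ with the Jacobian change of variables bounding the norm), and another $(\nabla\mathcal{C})^{-1}$; one makes this rigorous by approximating $\phi$ in $W^{2,q}_{loc}$ by smooth functions $\phi_\varepsilon$ (for instance the mollifications $M_m(\phi)$ of Proposition \ref{pr:rotconv}, for which all the above is classical), deriving the identity for the smooth approximants, and passing to the limit using the uniform bounds. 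Third, I would conclude: $\psi(z') = (T(\mathcal{C}^{-1}z',\phi(\mathcal{C}^{-1}z')))_n$ is a composition of the $W^{2,q}_{loc}$ map $z'\mapsto(\mathcal{C}^{-1}z',\phi(\mathcal{C}^{-1}z'))$ (each component is either $\mathcal{C}^{-1}$ itself or $\phi\circ\mathcal{C}^{-1}$, the latter being $W^{2,q}_{loc}$ by the chain rule since $\phi\in W^{2,q}_{loc}$, $\mathcal{C}^{-1}\in W^{2,q}_{loc}\cap C^1$, and $\nabla\mathcal{C}^{-1}\in L^\infty$) with the affine map $T$ followed by a coordinate projection, hence $\psi\in W^{2,q}_{loc}(V)$.

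\textbf{Main obstacle.} The delicate point is the rigorous chain-rule computation for $W^{2,q}$ compositions when $q$ may equal $1$ (or $\infty$), where one cannot rely on Sobolev embeddings to gain pointwise control of second derivatives. The clean way around this is the mollification argument: since $\phi_m = M_m(\phi)$ (possibly with the constants $c_m$ set to zero here) is smooth, all formal identities—for $\nabla\mathcal{C}_m$, $\det\nabla\mathcal{C}_m$, $\nabla\mathcal{C}_m^{-1}$, $\nabla^2\mathcal{C}_m^{-1}$, and finally $\nabla^2\psi_m$—are classical; the transversality lower bound \eqref{quasi:transM} holds uniformly in $m$, so $(\nabla\mathcal{C}_m)^{-1}$ is bounded uniformly in $m$; and $\phi_m\to\phi$ in $W^{2,q}_{loc}$ together with the convergence of the transition maps in Proposition \ref{pr:rotconv1} lets one pass to the limit in $L^q_{loc}$ in the expression for $\nabla^2\psi_m$, identifying the limit as the weak Hessian of $\psi$ (whose first-order convergence is already known). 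For $q=\infty$ one instead uses that the uniform $W^{2,\infty}$ bound on $\phi_m$ forces a uniform $W^{2,\infty}$ bound on $\psi_m$, and passes to a weak-$*$ limit. One must also take care that all constructions are local: work on compactly contained subdomains $U'\Subset U$, use that $\mathcal{C}(U')\Subset V$, and patch.
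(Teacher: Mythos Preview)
Your mollification argument in the ``main obstacle'' paragraph is precisely the paper's proof: work with $\phi_m=M_m(\phi)$ (taking $c_m=0$), obtain $\psi_m$ from Proposition~\ref{pr:rotconv1}, differentiate the implicit relation $\tilde f_m(y',\psi_m(y'))=0$ once and then twice to express $\nabla\psi_m$ and $\nabla^2\psi_m$ in terms of $\nabla\phi_m\circ\mathcal C_m^{-1}$ and $\nabla^2\phi_m\circ\mathcal C_m^{-1}$ with denominators uniformly bounded below by the transversality estimate~\eqref{quasi:transM}, and pass to the limit in $L^q$ (the paper uses Proposition~\ref{prop:convoluz} to handle the convergence of the compositions $\nabla^k M_m(\phi)\circ\mathcal C_m^{-1}\to\nabla^k\phi\circ\mathcal C^{-1}$), or weak-$\ast$ for $q=\infty$. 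One correction to your preamble: the embedding $W^{2,q}_{loc}\subset C^1$ fails for small $q$ (e.g.\ $q=1$ when $n-1\ge 2$), and $\phi$ is only assumed Lipschitz, so neither alternative you offer for invoking Proposition~\ref{prop:trass}(ii) on $\phi$ itself is valid --- but this is harmless, since $\psi$ is already \emph{given} in the hypotheses of Propositions~\ref{pr:rotconv}--\ref{pr:rotconv1} and need not be reconstructed, and your actual argument via the smooth $\phi_m$ never relies on that claim.
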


\begin{proof}
   In the following proof, we will make use of Propositions \ref{pr:rotconv}-\ref{pr:rotconv1} with $c_m\equiv 0$.
    
    Fix $U_0\Subset U$ open, and set $V_0=\C (U_0)$. Since $\mathrm{dist}_\H(V_m,V)\to 0$ due to \eqref{VmquasiV}, from \cite[Proposition 2.2.17]{henrot} we may find $m_0>0$ large enough such that
    \begin{equation*}
        V_0\Subset V\cap V_m \quad\text{for all $m>m_0$.}
    \end{equation*}
    Now let 
    \begin{equation*}
        f_m(x)=x_n-M_m(\phi)(x')\quad\text{for $x\in U_m\times \R$,}
    \end{equation*}
and set $\widetilde{f}_m(y)\equiv f_m(x)$ for $y=Tx$. 
Then owing to \eqref{rotaz:zM}, we have that $\widetilde{f}_m\big(y',\psi_m(y')\big)=0$ for all $y'\in V_m$. By differentiating this expression, we obtain
    \begin{equation}\label{oct1}
        \frac{\partial \psi_m}{\partial y'_k}(y')=-\bigg(\frac{\partial\widetilde{f}_m}{\partial y_n}\big(y',\psi_m(y') \big)\bigg)^{-1}\bigg(\frac{\partial \widetilde{f}_m}{\partial y'_k}\big(y',\psi_m(y')\big)\bigg)\,,
    \end{equation}
    and from the chain rule, equation $\mathbf{n}=\mathcal{R}^t e_n$, the definition of $\C_m^{-1}$ and \eqref{quasi:transM}, we have
\begin{equation}\label{oct2}
    \begin{split}\frac{\partial\widetilde{f}_m}{\partial y'_k}\big(y',\psi_m(y') \big) & =\mathcal{R}_{kn}-\sum_{l=1}^{n-1}\frac{\partial M_m(\phi)}{\partial x'_l}(\C^{-1}_m y')\,\mathcal{R}_{kl}
\\
\frac{\partial\widetilde{f}_m}{\partial y_n}\big(y',\psi_m(y') \big) & =\mathcal{R}_{nn}-\sum_{l=1}^{n-1}\frac{\partial M_m(\phi)}{\partial x'_l}(\C^{-1}_m y')\mathcal{R}_{nl}\geq \frac{1}{\sqrt{1+L^2}}\,,
\end{split}
\end{equation}

Moreover, thanks to 
\eqref{nbvmm} and the $L$-Lipschitz continuity of $M_m(\phi)$, the maps $\C_m$ are uniformly bi-Lipschitz, i.e.
\begin{equation*}
    \|\nabla\C_m\|_{L^\infty}+\|\nabla\C_m^{-1}\|_{L^\infty}\leq C(n,L).
\end{equation*}
Thanks to this piece of information and \eqref{uoaaa0}, we may apply Proposition \ref{prop:convoluz} and get
\begin{equation}\label{oct3}
    \nabla M_m(\phi)(\C^{-1}_m y')\to   \nabla\phi(\C^{-1} y')\quad\text{for $\H^{n-1}$-a.e. $y'\in V_0$}
\end{equation}
By combining \eqref{oct1}-\eqref{oct3}, and by using dominated convergence theorem, we find that $\nabla \psi_m$ converges in $L^p(V_0)$ to some vector-valued function $G$ for all $p\in[1,\infty)$. It then follows from \eqref{c:convrot} and the uniqueness of the distributional limit that $G=\nabla \psi$, hence
\begin{equation}\label{oct4}
    \nabla\psi_m\to \nabla \psi\quad\text{$\H^{n-1}$-a.e. in $V_0$ and in $L^p(V_0)$. }
\end{equation}
Next, we differentiate twice identity $\widetilde{f}_m\big(y',\psi_m(y')\big)=0$, and for $k,r=1,\dots,n-1$ we obtain
\begin{equation}\label{oct5}
\begin{split}
    \frac{\partial^2 \psi_m}{\partial y'_k \partial y'_r}(y')=-\bigg(\frac{\partial\widetilde{f}}{\partial y_n}\big(y',\psi_m(y')\big) \bigg)^{-1}\bigg\{ 
 & \frac{\partial^2 \widetilde{f}}{\partial y'_k \partial y'_r}\big( 
 y',\psi_m(y')\big)+\frac{\partial^2 \widetilde{f}}{\partial y'_k \partial y_n}\big( 
 y',\psi_m(y')\big)\,\frac{\partial \psi_m}{\partial y'_r}(y')
 \\
 & +\frac{\partial^2 \widetilde{f}}{\partial y'_r \partial y_n}\big( 
y',\psi_m(y')\big)\,\frac{\partial \psi_m}{\partial y'_k}(y')
\\
& +\frac{\partial^2 \widetilde{f}}{\partial y_n \partial y_n}\big( 
 y',\psi_m(y')\big)\,\frac{\partial \psi_m}{\partial y'_k}(y')\,\frac{\partial \psi_m}{\partial y'_r}(y')\bigg\}\,,
\end{split}
\end{equation}
while from the chain rule and the properties of $\C_m$, we obtain
\begin{equation}\label{oct6}
    \frac{\partial^2 \widetilde{f}}{\partial y'_k \partial y'_r}\big( 
 y',\psi_m(y')\big)=-\sum_{l,t=1}^{n-1}\frac{\partial^2 M_m(\phi)}{\partial x'_l\partial x'_t}(\C^{-1}_m y')\,\mathcal{R}_{kl}\mathcal{R}_{rt}\,.
\end{equation}
Then, another application of Proposition \ref{prop:convoluz} entails that 
\begin{equation*}
    \nabla^2 M_m(\phi)(\C^{-1}_m y')\to \nabla^2 \phi(\C^{-1} y')\quad\text{for $\H^{n-1}$-a.e. $y'\in V_0$ and in $L^q(V_0)$,}
\end{equation*}
in the Case $q\in [1,\infty)$. From this, \eqref{oct2}, \eqref{oct4}-\eqref{oct6} and by using dominated convegence Theorem \ref{thm:dc}, we find that $\nabla^2 \psi_m$ converges in $L^q(V_0)$ to some matrix valued function $H$. Whence $H=\nabla^2 \psi$ due to the uniqueness of the distributional limit, and the proof in the Case $q\in [1,\infty)$ is complete due to the arbitrariness of $U_0$.

In the Case $q=\infty$, from  \eqref{oct2}, \eqref{oct5} and \eqref{oct6} we infer that $\{\psi_m\}_m$ is a sequence uniformly bounded in $W^{2,\infty}(V_0)$ with respect to $m$. Therefore, up to a subsequence, we have that $\psi_m$ weakly-$\ast$ converge in $W^{2,\infty}(V_0)$ to $\psi$, thus completing the proof.
\end{proof}

At last, we close this section with the following intrinsic  property of $W^{2,q}$ domains.

\begin{corollary}\label{yessa1}
    Let $\Omega$ be a bounded Lipschitz domains such that $\partial \Omega\in W^{2,q}$ for some $q\in [1,\infty]$. Then any Lipschitz local chart $\psi$ of $\partial \Omega$ is of class $W^{2,q}$.
\end{corollary}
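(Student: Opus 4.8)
The plan is to reduce Corollary \ref{yessa1} to Proposition \ref{yessa} via a suitable choice of isometry. Suppose $\psi : V \to \R$ is a Lipschitz local chart of $\partial\Omega$, associated to some orthogonal coordinate system, so that on a coordinate cylinder $\partial\Omega$ is described by the graph $G_\psi$ and $\Omega$ locally by the subgraph $S_\psi$. By the definition of $W^{2,q}$ domain, Definition \ref{2q:domains}, there is (at least) \emph{one} family of coordinate systems, indexed by points of $\partial\Omega$ and radii $R \in (0,R_\Omega]$, in which $\partial\Omega$ is described by $L_\Omega$-Lipschitz charts $\phi$ that belong to $W^{2,q}$. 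Fix a point $x_0 \in \partial\Omega$ lying over a fixed $y_0' \in V$; near $x_0$ the surface $\partial\Omega$ is simultaneously the graph of $\psi$ (in the given system) and, after shrinking, the graph of some $W^{2,q}$ chart $\phi$ defined on a ball $B_R'$ in the ``good'' system. The transition between the two coordinate systems is precisely an isometry $T$ of $\R^n$ of the form \eqref{isommm}, and on the overlap we have exactly the relations \eqref{rotaz:z}: $T G_\phi = G_\psi$ and $T S_\phi = S_\psi \cap T(U\times\R)$, where $U = B_R'$.

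The main point is that the hypotheses of Proposition \ref{yessa} are then all met: $\phi$ and $\psi$ are $L$-Lipschitz (with $L$ a common Lipschitz bound for both charts — one may take $L$ as large as needed), they satisfy \eqref{rotaz:z} via the isometry $T$, and $\phi \in W^{2,q}_{loc}(U)$ by hypothesis. Proposition \ref{yessa} then yields $\psi \in W^{2,q}_{loc}(V_0)$ on the relevant open piece $V_0 \subset V$ over which the identification is valid. Since this can be done in a neighbourhood of every point $y_0' \in V$ — by covering $\partial\Omega$ by the ``good'' coordinate cylinders of radius $\le R_\Omega$ — a standard partition-of-unity / local-to-global argument upgrades the local conclusion to $\psi \in W^{2,q}_{loc}(V)$, and on bounded pieces this is the desired $W^{2,q}$ regularity of the chart $\psi$. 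One subtlety worth recording explicitly is that the relations \eqref{rotaz:z} require the normal $\mathbf{n} = \mathcal{R}^t e_n$ of the $\psi$-system to be transversal to $\phi$ in the sense of Definition \ref{def:transversal}; but this is automatic here, since $\psi$ is itself a \emph{Lipschitz} chart, so Part (i) of Proposition \ref{prop:trass} gives the quantitative transversality \eqref{quant:trans} with constant $1/\sqrt{1+L^2}$, which is exactly what Proposition \ref{yessa} (through Propositions \ref{pr:rotconv}--\ref{pr:rotconv1}) uses.

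The only genuine obstacle is bookkeeping: one must check that two local descriptions of the same hypersurface — the given one by $\psi$ and the ``official'' $W^{2,q}$ one by $\phi$ — really are related by a single isometry on a common open neighbourhood, i.e. that the cylinders can be chosen to overlap over an honest $(n-1)$-dimensional open set $U_0 \Subset U$ with $\C(U_0) \Subset V$. This is a routine consequence of the definition of Lipschitz domain (the boundary pieces are graphs over balls in finitely many directions, and any two such graphs agree as subsets of $\partial\Omega$ on their common domain of definition), but it is the step where care is needed, since $\psi$ is defined a priori on its own ball $V = B'_{R}$ and one has to restrict to the part of $V$ over which $\partial\Omega$ is also captured by the chart $\phi$. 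Once this overlap is set up, the corollary is immediate from Proposition \ref{yessa}, and there is essentially nothing further to compute.
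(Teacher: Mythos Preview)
Your proposal is correct and follows exactly the same approach as the paper: reduce to Proposition~\ref{yessa} by exhibiting, for each point of the domain of $\psi$, a $W^{2,q}$ chart $\phi$ from Definition~\ref{2q:domains} and the isometry $T$ relating the two coordinate systems so that \eqref{rotaz:z} holds, then invoke Proposition~\ref{yessa}. The paper compresses this into two sentences and leaves the covering/patching and the verification of hypotheses implicit; your expanded discussion of transversality and the overlap bookkeeping is accurate and simply makes explicit what the paper takes for granted.
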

\begin{proof}
    From Definition \ref{2q:domains}, there exists a Lipschitz local chart $\phi\in W^{2,q}$ and an isometry $T$ such that \eqref{rotaz:z} holds. The thesis then follows from Proposition \ref{yessa}.
\end{proof}
As a final remark, let us mention that both Proposition \ref{yessa} and Corollary \ref{yessa1} can be easily extended to the $W^{k,q}$ Case.

\section{Proof of Theorem \ref{appr:mthm}}\label{sec:apprmain}
This section is devoted to the proof of Theorem \ref{appr:mthm}, which is divided into a few steps.

\noindent From here onward, $m_0$ and $k_0$ will denote  positive integers, possibly changing from line to line.

\subsection{Covering of \texorpdfstring{$\partial \Omega$}{}}
By Definition \ref{def:lip}, for any $x_0\in \partial \Omega$, we may find an $L_\Omega$-Lipschitz function $\phi^{x_0}:B'_{R_\Omega}\to \R$, and an isometry $T^{x_0}$ of $\rn$ such that $T^{x_0}x_0=0$, and
\begin{equation*}
\begin{split}
    & T^{x_0} \partial \Om \cap \big(B'_{R_\Omega}\times (-\ell,\ell)\big)=\big\{(y', \phi^{x_0} (y'))\,:\,y'\in B'_{R_\Omega}\big\},
    \\
    & T^{x_0} \Om \cap \big(B'_{R_\Omega}\times (-\ell,\ell)\big)=\big\{(y',y_n)\,:\,x'\in B'_{R_\Omega}\,,\,-\ell<y_n<\phi^{x_0} (y')\big\}\,,
\end{split}
\end{equation*}
where $\ell=R_\Omega(1+L_\Omega)$.
Let us consider the open covering $\{B_{R_\Omega/8}(x_0)\}_{x_0\in\partial \Omega}$ of $\partial \Omega$
\footnote{Any other open covering is allowed, as long as its sets are strictly contained in the coordinate cylinders $B'_{R_\Omega}\times (-\ell,\ell)$. The open covering here chosen helps simplifying a few computations, especially in the isocapacitary estimate \eqref{iscap:omm'}.  }. By compactness, we may find a finite sequence of points $\{x^i\}_{i=1}^N\subset \partial \Omega$ such that
\begin{equation}\label{cover:deom}
   \partial \Omega\Subset \bigcup_{i=1}^N B_{\frac {R_\Omega}{8}}(x^i)\,,
\end{equation}
as well as $L_\Omega$-Lipschitz functions $\phi^i$ and isometries $T^i$ satisfying 
\begin{equation}\label{grafo1}
    \begin{split}
    & T^i \partial \Om \cap \big(B'_{R_\Omega}\times (-\ell,\ell)\big)=\big\{(y', \phi^i(y'))\,:\,y'\in B'_{R_\Omega}\big\},
    \\
    & T^i \Om \cap \big(B'_{R_\Omega}\times (-\ell,\ell)\big)=\big\{(y',y_n)\,:\,y'\in B'_{R_\Omega}\,,\,-\ell<y_n<\phi^i(y')\big\}.
\end{split}
\end{equation}
We denote by $\mathcal{R}^i$ the orthogonal matrix of $T^i$, i.e. $T^i$ can be written as
\begin{equation*}
    T^i x=\mathcal{R}^i(x-x^i)\quad x\in \R^n\,.
\end{equation*}

Notice also that the cardinality $N$ of this covering of $\partial \Omega$ may be chosen satisfying
\begin{equation}\label{card:NN}
    N\leq c(n)\,\bigg(\frac{d_\Omega}{R_\Omega}\bigg)^n\,.
\end{equation}
We then set
\begin{equation*}
    \Omega_t\coloneqq\{x\in \Omega\,:\, \mathrm{dist}(x,\partial\Omega)>t\}\,,
\end{equation*}
so that by \eqref{cover:deom} we have
\begin{equation}\label{cover:omega}
    \overline{\Omega}\Subset W\coloneqq \bigcup_{i=1}^N B_{\frac{R_\Omega}{8}}(x^i)\cup \Omega_{\frac{R_\Omega}{32}}\,.
\end{equation}
Starting from this point, we construct a suitable partition of unity: let
\begin{equation*}
    \eta_i\coloneqq \tilde{\rho}_{\frac{R_\Omega}{32}}\ast \chi_{B_{\frac{3R_\Omega}{16}}(x^i)}\quad\text{and}\quad \eta_0\coloneqq \tilde{\rho}_{\frac{R_\Omega}{64}}\ast \chi_{\Omega_{\frac{3R_\Omega}{64}}}\,,
\end{equation*}
where $\tilde{\rho}_{t}$ is the standard, radially symmetric convolution kernel on $\R^n$, and $\chi_A$ denotes the indicator function of a set $A$.

\noindent Standard properties of convolution ensure that $\eta_i\in C^\infty_c(B_{\frac{R_\Omega}{4}}(x^i))$, $\eta_0\in C^\infty_c(\Omega_{\frac{R_\Omega}{16}})$, \mbox{$0\leq \eta_i\leq 1$,} 
\begin{equation*}
    \eta_i\geq 1 \quad\text{on }B_{\frac{R_\Omega}{8}}(x^i)\,,\quad\eta_0\geq 1 \quad\text{on }\Omega_{\frac{R_\Omega}{32}}\,,
\end{equation*}
and
\begin{equation*}
    |\nabla^k \eta_i|\leq \frac{c(n,k)}{R_\Omega^k}\,,\quad\text{for all $k\in\N$.}
\end{equation*}
Therefore, by defining $\xi_i\,:\,W\to [0,1]$ as 
\begin{equation*}    
    \xi_i\coloneqq \frac{\eta_i}{\sqrt{\sum_{j=0}^N \eta_j}}\,,\quad i=0,\dots,N\,,
\end{equation*}
then we have that $\xi_i\in C^\infty_c(B_{\frac{R_\Omega}{4}}(x^i))$ for $i=1,\dots,N$, $\xi_0\in C^\infty_c(\Omega_{\frac{R_\Omega}{16}})$,
\begin{equation}\label{somma}
    \sum_{i=0}^N\xi_i(x)=1\quad\text{for all $x\in W$,}
\end{equation}
and
\begin{equation}\label{deriv:xi}
    |\nabla^k \xi_i|\leq \frac{c(n,k)}{R_\Omega^k}\quad\text{on $W$, for all $k\in\N$}\,.
\end{equation}

\subsection{Boundary defining function} Starting from the partition of unity $\{\xi_i\}_{i=0}^N$, and the local charts $\{\phi^i\}_{i=1}^N$, we can construct the boundary defining function of $\partial \Omega$ as in \cite[Proposition 5.43]{lee1}.

For any $\varepsilon\in [0,R_\Omega)$ and $j=1,\dots,N$, we define the rotated cylinders
\begin{equation}
    K^j_\varepsilon\coloneqq (T^j)^{-1}\big(B'_{R_\Omega-\varepsilon}\times (-\ell,\ell)\big)\,,
\end{equation}
where $\ell=R_\Omega(1+L_\Omega)$. Let $f^j:K^j_0\to \R$ be the functions defined as
\begin{equation*}
    f^j(x)\coloneqq z_n-\phi^j(z')\,,\quad z=T^j x\,,
\end{equation*}
and observe that from \eqref{grafo1} we have
\begin{equation}\label{edeome}
\begin{split}
    \{f^j=0\} & =\partial \Omega\cap K^j_0
     \\
     \{f^j<0\} & = \Omega\cap K^j_0
    \end{split}
\end{equation}

A boundary defining function of $ \overline{\Omega}$ is the function $F:W\to \R$ defined as
\begin{equation}\label{bd:defin}
    F(x)\coloneqq \sum_{j=1}^N f^j(x)\,\xi_j(x)-\xi_0(x)\,,
\end{equation}
where the product $f^j(x)\,\xi_j(x)$ is set equal to zero if $x\not\in\mathrm{supp}\,\xi_j$. Since each $f^j$ is Lipschitz continuous, so is the function $F$. 

Thanks to the properties of $\{\xi_j\}_{j=0}^N$, \eqref{grafo1} and \eqref{edeome}, it is easily seen that
\begin{equation}\label{bdef:om}
    \Omega=\{x\in W\,:\,F(x)<0\}\quad\text{and}\quad\partial\Omega=\{x\in W\,:\,F(x)=0\}\,.
\end{equation}

\subsection{Regularization and definition of the smooth approximating sets \texorpdfstring{$\omega_m,\Omega_m$}{}}
For $i=1,\dots,N$, we can define the smooth functions $\phi^i_m,\widetilde{\phi}^i_m:B'_{R_\Omega-\frac{1}{m}}\to \R$ as
\begin{gather}
    \phi^i_m\coloneqq M_m(\phi^i)+\|M_m(\phi^i)-\phi^i\|_{L^\infty(B'_{R_\Omega-1/m})}+\frac{L_\Omega}{m}   \notag
    \\
    \text{and} \label{def:regular}
    \\
    \widetilde{\phi}^i_m\coloneqq M_m(\phi^i)-\|M_m(\phi^i)-\phi^i\|_{L^\infty(B'_{R_\Omega-1/m})}-\frac{L_\Omega}{m}\,.\notag
\end{gather}
From the results of Proposition \ref{pr:rotconv}, we deduce that $\phi^i_m,\widetilde{\phi}^i_m\in C^\infty$ are $L_\Omega$-Lipschitz functions, and 
\begin{equation}\label{linf:m}
    \begin{split}
        \frac{L_\Omega}{m}\leq \phi^i_m(y')-\phi^i(y')\leq\frac{3\,L_\Omega}{m}
        \\
        \frac{L_\Omega}{m}\leq \phi^i(y')-\widetilde{\phi}^i(y')\leq\frac{3\,L_\Omega}{m}\,,
    \end{split}
\end{equation}
for all $y'\in B'_{R_\Omega-1/m}$ and $i=1,\dots,N$.
Taking inspiration from \eqref{edeome} and \eqref{bdef:om}, we are led to define the functions
\begin{equation}\label{effe:jm}
\begin{split}
    & f^j_m(x)\coloneqq z_n-\phi^j_m(z')
    \\
    & \tilde{f}^j_m(x)\coloneqq z_n-\widetilde{\phi}^j_m(z')\,,\quad z=T^j x\in B'_{R_\Omega-\frac{1}{m}}\times(-\ell,\ell)\,,
\end{split}
\end{equation}
and functions $F_m,\widetilde{F}_m:W\to \R$ defined as
\begin{equation}\label{bdm:defin}
    \begin{split}F_m(x)\coloneqq\sum_{j=1}^N f^j_m(x)\,\xi_j(x)-\xi_0(x)
    \\
    \widetilde{F}_m(x)\coloneqq\sum_{j=1}^N \tilde{f}^j_m(x)\,\xi_j(x)-\xi_0(x)\,,
\end{split}
\end{equation}
where the products $f^j_m(x)\,\xi_j(x)$ and $\tilde{f}^j_m(x)\,\xi_j(x)$ have to be interpreted equal to zero when $x\not\in \mathrm{supp}\,\xi_j$.

Clearly, $F_m$ and $\widetilde{F}_m$ are $C^\infty$-smooth functions on $W$, and since 
\begin{equation}\label{efjmef}
    \frac{L_\Omega}{m}\leq f^j(x)-f^j_m(x)<\frac{3\,L_\Omega}{m}\,,\quad \frac{L_\Omega}{m}\leq\tilde{f}^j_m(x)-f^j(x)<\frac{3\,L_\Omega}{m}
\end{equation}
for all $x\in K^j_{1/m}$ thanks to \eqref{linf:m}, we then have
\begin{equation}\label{iop}
    \frac{L_\Omega}{m}\leq F(x)-F_m(x)\leq \frac{3\,L_\Omega}{m}\,,\quad \frac{L_\Omega}{m}\leq\widetilde{F}_m(x)-F(x)\leq\frac{3\,L_\Omega}{m}\quad\text{for all $x\in W$.}
\end{equation}

The approximating open sets $\Omega_m,\,\omega_m$ are thus defined as follows
\begin{equation}
    \Omega_m\coloneqq \{x\in W\,:\,F_m(x)<0\}\quad\text{and}\quad \omega_m\coloneqq \{x\in W\,:\,\widetilde{F}_m(x)<0\}\,,
\end{equation}
with boundaries
\begin{equation}
    \partial\Omega_m= \{x\in W\,:\,F_m(x)=0\}\quad\text{and}\quad \partial\omega_m= \{x\in W\,:\,\widetilde{F}_m(x)=0\}\,.
\end{equation}
In particular, since $F_m(x)<F(x)<\widetilde{F}_m(x)$ for all $x\in W$, owing to \eqref{bdef:om} we have
\begin{equation*}
    \omega_m\Subset \Omega\Subset \Omega_m\quad\text{for all $m\in \N$.}
\end{equation*}

We now proceed to prove the remaining properties of Theorem \ref{appr:mthm} for the outer sets $\Omega_m$.  The proofs for the inner sets $\omega_m$ are analogous.

\subsection{\texorpdfstring{$\partial \Omega_m,\partial \omega_m$}{} are smooth manifolds .}

Let us show that $\partial \Omega_m$ is a smooth manifold, with local charts $\{\psi^i_m\}_{i=1}^N$ defined on the same coordinate systems as $\{\phi^i\}_{i=1}^N$.

We fix a constant $\varepsilon_0\in (0,R_\Omega/4)$, and for all $i=1,\dots,N$ we set
\begin{equation*}
    F^i(y)=F(x) 
 \quad\text{and}\quad F^i_m(y)=F_m(x)\quad\text{for $y=T^i x$, $x\in W$.}
\end{equation*}

Owing to \eqref{grafo1} we have 
\begin{equation}\label{prep:transition0}
    \begin{split}
        \partial \Omega\cap K^i_0\cap K^j_0=(T^i)^{-1} & G_{\phi^i}\cap K^j_0=(T^j)^{-1}G_{\phi^j}\cap K^i_0  
        \\
        & \text{and}
        \\
         \Omega\cap K^j_0\cap K^i_0=(T^i)^{-1} & S_{\phi^i}\cap K^j_0\cap  K^i_0=(T^j)^{-1}S_{\phi^j}\cap K^i_0\cap  K^j_0\,,
    \end{split}
\end{equation}
whenever $ \partial \Omega\cap K^i_0\cap K^j_0\neq \emptyset$.

\noindent This piece of information will allow us to use the transversality property. Specifically, thanks to \eqref{prep:transition0} we may apply  Propositions \ref{prop:trass}-\ref{pr:rotconv} with functions $\phi=\phi^j$, $\psi=\phi^i$, isometry $T=T^j(T^i)^{-1}$, and defining set
\begin{equation*}
    U=U^{j,i}=\Pi\Big( G_{\phi^j}\cap T^j K^i_0\Big)\subset B'_{R_\Omega}\,.
\end{equation*}

\noindent\textbf{Claim 1.} There exists $m_0>0$ such that, for all $i=1,\dots,N$, for all  $m\geq m_0$ and all $x\in \big\{\frac{-3L_\Omega}{m_0}\leq F\leq \frac{3L_\Omega}{m_0}\big\}\cap K^i_{\varepsilon_0}$, we have
\begin{equation}\label{coervic:Fm}
    \frac{\partial F^i_m}{\partial y_n}(y)\geq \frac{1}{2\sqrt{1+L_\Omega^2}}\,,\quad\text{for all  $y=T^i x\in B'_{R_\Omega-\varepsilon_0}\times (-\ell,\ell)$. }
\end{equation}
Suppose by contradiction this is false; then for every $k\in \N$, we may find $m_k\geq k$ and a sequence $x^k\in \big\{-\frac{3L_\Omega}{k}\leq F\leq \frac{3L_\Omega}{k}\big\}$ such that $y^k=T^i x^k\in B'_{R_\Omega-\varepsilon_0}\times (-\ell,\ell)$ and
\begin{equation}\label{contradiction:1}
    \frac{\partial F_{m_k}^i}{\partial y_n}(y^k)<\frac{1}{2\sqrt{1+L_\Omega^2}}\,,\quad \text{for all $k\in \N$}
\end{equation}
By compactness, we may extract a subsequence, still labeled as $x^k$, such that $x^k\to x^0$, and in particular  $x^0\in \overline{K^i_0}$ and $F(x^0)=0$, hence $x^0\in \partial \Omega\cap \overline{K^i_0}$ due to \eqref{bdef:om}.

Then, by the chain rule we have
\begin{equation}\label{hbrfh}
    \frac{\partial f^i_m}{\partial y_n}(x)=1\quad\text{and}\quad \frac{\partial f^j_m}{\partial y_n}(x)=\big(\mathcal{R}^j(\mathcal{R}^i)^t\big)_{nn}-\sum_{s=1}^{n-1}\frac{\partial \phi^{j}_m}{\partial z'_s}\big(z' \big)\,\big(\mathcal{R}^j(\mathcal{R}^i)^t\big)_{ns}\,,
\end{equation}
if $x\in \mathrm{supp}\,\xi_j$, where $z'=\Pi\,T^j x$.
We now distinguish two cases: \\(i) $j\in \{1,\dots,N\}$ is such that $x^0\not\in \mathrm{supp}\,\xi_j$. Then $\mathrm{dist}\big(x^0,\mathrm{supp}\,\xi_j\big)>0$, hence $x^k\not\in \mathrm{supp}\,\xi_j$ for all $k\geq k_0$ large enough.
\\ (ii) $j\in \{1,\dots,N\}$ is such that $x^0\in \mathrm{supp}\,\xi_j$. In this case, it follows that $x^0\in \partial \Omega\cap K^i_0\cap B_{\frac{R_\Omega}{4}}(x^j)$, so that from \eqref{prep:transition0} we have $T^j x^0\in G_{\phi^j}\cap B_{\frac{R_\Omega}{4}}\cap T^j \overline{K^i_{\varepsilon_0}}$. By setting $(z^k)'=\Pi\,T^j x^k$, we thus have
\begin{equation*}
B'_{\frac{1}{m_k}}\big((z^k)'\big)\Subset\Pi\Big( G_{\phi^j}\cap T^j K^i_0\Big) \,,
\end{equation*}
for all $k\geq k_0$ large enough.
 Recalling the remarks after \eqref{prep:transition0},
by applying Proposition \ref{pr:rotconv}, and in particular  
 the transversality property \eqref{quasi:transM}
in \eqref{hbrfh}, we infer
\begin{equation*}
    \frac{\partial f^j_{m_k}}{\partial y_n}\big(x^k\big)=\big(\mathcal{R}^j(\mathcal{R}^i)^t\big)_{nn}-\sum_{s=1}^{n-1}\frac{\partial \phi^{j}_{m_k}}{\partial z'_s}\big((z^k)' \big)\,\big(\mathcal{R}^j(\mathcal{R}^i)^t\big)_{ns}\geq \frac{1}{\sqrt{1+L^2_\Omega}}\,,
\end{equation*}
provided $k\geq k_0$ is large enough. 
\\
In both cases, we have found that
\begin{equation}\label{tempo:cerFm}
    \frac{\partial f^j_{m_k}}{\partial y_n}(x^k)\,\xi_j\big(x^k\big)\geq\frac{\xi_j(x^k)}{\sqrt{1+L_\Omega^2}}\quad\text{for all $j=1,\dots,N$ and $k\geq k_0$.}
\end{equation}
Also, owing to \eqref{efjmef} and \eqref{edeome} we have
\begin{equation*}
    \begin{split}
        |f^j_{m_k}(x^k)|\,\Big|\frac{\partial \xi_j(x^k)}{\partial y_n}\Big| & \leq |f^j_{m_k}(x^k)-f^j(x^k)|\,|\nabla\xi_j(x^k)|+|f^j(x^k)|\,|\nabla\xi_j(x^k)|
        \\
        &\leq \frac{1}{m_k}+|f^j(x^k)|\,|\nabla\xi_j(x^k)|\xrightarrow{k\to\infty} |f^j(x^0)|\,|\nabla\xi_j(x^0)|=0\,,
    \end{split}
\end{equation*}
and $|\nabla \xi_0(x^k)|\to |\nabla\xi_0(x^0)|=0$ since $x^0\in \partial \Omega$. By coupling this piece of information with \eqref{somma}, \eqref{contradiction:1} and \eqref{tempo:cerFm}, we finally obtain 
\begin{equation*}
\begin{split}
    \frac{1}{2\sqrt{1+L_\Omega^2}}  >\frac{\partial F^i_{m_k}}{\partial y_n}(y^k) & =\sum_{j=1}^N\frac{\partial f^j_{m_k}}{\partial y_n}(x^k)\,\xi_j(x^k)+\sum_{j=1}^N f^j_{m_k}(x^k)\,\frac{\partial \xi_j}{\partial y_n}(x^k)-\frac{\partial \xi_0}{\partial y_n}(x^k)
    \\
    & \geq \sum_{j=1}^N \frac{\xi_j(x^k)}{\sqrt{1+L_\Omega^2}}+\sum_{j=1}^N f^j_{m_k}(x^k)\,\frac{\partial \xi_j}{\partial y_n}(x^k)-\frac{\partial \xi_0}{\partial y_n}(x^k)
    \\
    & \xrightarrow{k\to\infty}\sum_{j=1}^N\frac{\xi_j(x^0)}{\sqrt{1+L_\Omega^2}}=\frac{1}{\sqrt{1+L_\Omega^2}}\,,
    \end{split}
\end{equation*}
which is a contradiction, and thus \eqref{coervic:Fm} holds true.
\vspace{0.3cm}

\noindent\textbf{Claim 2.} There exists $m_0>0$ such that $\forall y'\in B'_{R_\Omega-\varepsilon_0}$, $\forall m\geq m_0$, $\exists y_n\in (-\ell,\ell)$ with $y=(y',y_n)=T^i x\in T^i W$ satisfying $F_m^i(y)\geq 0$.

\vspace{0.2cm}

Again, assume by contradiction this is false. Then for all $k\in \N$, we may find sequences $m_k\geq k$ and $(y^k)'\in B'_{R_\Omega-\varepsilon_0}$ such that 
\begin{equation}\label{contradiction:2}
    F^i_{m_k}\big( (y^k)',y_n)<0\quad\text{for all $y_n\in (-\ell,\ell)$ such that $\big((y^k)',y_n\big)\in T^i W$.}
\end{equation}
By compactness, we may find a subsequence, still labeled as $(y^k)'$, satisfying $(y^k)'\to (y^0)'\in \overline{B}'_{R_\Omega-\varepsilon_0}$. Fix $w_n\in (-\ell,\ell)$ such that $\big((y^0)',w_n\big)\in T^i W$, and let $\{w^k_n\}_{k\in \N}\subset \R$ be a sequence satisfying $w^k_n\xrightarrow{k\to\infty} w_n$. Then $\big( (y^k)', w_n^k\big)\to \big( (y^0)', w_n\big)$, so that $\big( (y^k)', w_n^k)\in T^i W$ for $k\geq k_0$ large enough being $W$ open, and from \eqref{contradiction:2} we have $F^i_{m_k}\big((y^k)',w_n^k \big)<0$. By using \eqref{iop} and the Lipschitz continuity of $F$, it is readily shown that
\begin{equation*}
    \lim_{k\to\infty} F^i_{m_k}\big( (y^k)',w_n^k)=F^i\big((y^0)', w_n)\,,
\end{equation*}
whence $F^i\big((y^0)', w_n)\leq 0$ for all $w_n$ as above, but this contradicts the fact that \mbox{$F^i\big( (y^0)',w_n\big)>0$}
 whenever $w_n>\phi^i\big((y^0)'\big)$ due to \eqref{bdef:om}, hence Claim 2 is proven.
\vspace{0.3cm}

Now let $y'\in B'_{R_\Omega-\varepsilon_0}$; by \eqref{iop} and since $F^i\big(y',\phi^i(y')\big)=0$, we have \mbox{$F^i_m\big(y',\phi^i(y')\big)<0$}. Thus, owing to Claim 2 we may find $y_n$ such that $F^i_m(y',y_n)=0$.

The monotonicity property \eqref{coervic:Fm} of Claim 1, and the fact that $\partial \Omega_m=\{F_m=0\}\subset \{\frac{L_\Omega}{m}\leq F\leq \frac{3L_\Omega}{m}\}$ due to \eqref{iop} ensure that such point $y_n$ is unique for all $y'\in B'_{R_\Omega-\varepsilon_0}$.
This entails the existence of a function $\psi^i_m\,:\,B'_{R_\Omega-\varepsilon_0}\to \R$ such that $F^i_m\big(y',\psi^i_m(y')\big)=0$ for all $y'\in B'_{R_\Omega-\varepsilon_0}$. Furthemore, owing to \eqref{bdef:om} and \eqref{iop}, we have that $\psi^i_m(y')>\phi^i(y')$ for all $y'\in B'_{R_\Omega-\varepsilon_0}$, and from the implicit function theorem we also infer that $\psi^i_m\in C^\infty\big(B'_{R_\Omega-\varepsilon_0}\big)$.
Moreover, via a compactness argument as in Claim 1-2 and \eqref{cover:deom}, one can prove that
\begin{equation}\label{conenuto}
\begin{split}
   \bigg\{-\frac{3L_\Omega}{m}\leq\, &\, F\leq \frac{3 L_\Omega}{m}\bigg\}\subset \bigcup_{i=1}^N B_{\frac{R}{8}}(x^i)
    \\
  \bigg\{-\frac{3L_\Omega}{m}\leq\, &\, F\leq \frac{3 L_\Omega}{m}\bigg\}\cap \mathrm{supp}\,\xi_0=\emptyset\,,\quad\text{for all $m>m_0$,}
    \end{split}
\end{equation}
so that, in particular, the cylinders $\big\{K^i_{2\varepsilon_0}\big\}_{i=1}^N$ are an open cover of $\partial \Omega_m$, and \mbox{$\partial \Omega_m\cap\, \mathrm{supp}\,\xi_0=\emptyset$} provided  $m>m_0$ is large enough.

We have thus proven that $\partial \Omega_m$ is a  $C^\infty$-smooth manifold for $m>m_0$, with local boundary charts $\{\psi^i_m\}_{i=1}^N$ defined on the same coordinate cylinders as $\{\phi^i\}_{i=1}^N$, that is 
\begin{equation}\label{grafo2}
    \begin{split}
    & T^i \partial \Om_m \cap \big(B'_{R_\Omega-\varepsilon_0}\times (-\ell,\ell)\big)=\big\{(y', \psi^i_m(y'))\,:\,y'\in B'_{R_\Omega-\varepsilon_0}\big\},
    \\
    & T^i \Om_m \cap \big(B'_{R_\Omega-\varepsilon_0}\times (-\ell,\ell)\big)=\big\{(y',y_n)\,:\,y'\in B'_{R_\Omega-\varepsilon_0}\,,\,-\ell<y_n<\psi^i_m(y')\big\}.
\end{split}
\end{equation}

\subsection{Approximation properties.}
First, we show that there exists $m_0>0$ such that
\begin{equation}\label{uniffff}
    \|\psi^i_m-\phi^i\|_{L^\infty(B'_{R_\Omega-2\,\varepsilon_0})}\leq \frac{6\,L_\Omega\,\sqrt{1+L_\Omega^2}}{m}\quad\text{for all $m>m_0$.}
\end{equation}
Assume by contradiction this is false; then we may find  sequences $m_k\uparrow\infty$ and $(y^k)'\in B'_{R_\Omega-2\varepsilon_0}$ such that
\begin{equation}\label{contradiction:3}
    \psi^i_{m_k}\big((y^k)' \big)-\phi^i\big((y^k)'\big)> \frac{6\,L_\Omega\,\sqrt{1+L_\Omega^2}}{m_k}
\end{equation}
Up to a subsequence, we have $(y^k)'\to (y^0)'\in \overline{B}'_{R_\Omega-2\varepsilon_0}$, and $\psi^i_{m_k}\big((y^k)'\big)\to \ell_0\in \R$. Furthermore, since $\Big( (y^k)',\psi^i_m\big((y^k)'\big)\Big)\in \{F^i_{m_k}=0\}\subset T^i\{\frac{L_\Omega}{m_k} \leq F\leq \frac{3L_\Omega}{m_k}\}$, we readily infer that $F^i\big((y^0)',\ell_0 \big)=0$, whence $\ell_0=\phi^i\big( (y')^0\big)$ due to \eqref{bdef:om} and \eqref{grafo1}. By continuity we also have $\phi^i\big((y^k)'\big)\to \phi^i\big((y^0)'\big)$, which implies that
\begin{equation*}
    \psi^i_{m_k}\big((y^k)' \big)-\phi^i\big((y^k)'\big)\xrightarrow{k\to\infty}0\,.
\end{equation*}
Then, for all $t\in[0,1]$, we have
\begin{equation*}
    \begin{split}
        \Big|F^i\Big((y^k)', t\,\psi^i_{m_k}\big((y^k)' \big) +(1-t) &\phi^i\big((y^k)'\big) \Big)-F^i\Big( (y^k)',\phi^i\big((y^k)' \big)\Big)\Big|
        \\
        & \leq L_F\,t\,|\psi^i_{m_k}\big((y^k)' \big)-\phi^i\big((y^k)'\big)|\xrightarrow{k\to\infty}0\,,
    \end{split}
\end{equation*}
where $L_F$ denotes the Lipschitz constant of $F$. This implies that for all $k\geq k_0$ large enough, the line segment
\begin{equation*}
    \big\{(y^k)'\big\}\times \big[\phi^i\big((y^k)' \big),\psi^i_{m_k}\big((y^k)' \big) \big]\subset T^i\Big\{ -\frac{3\,L_\Omega}{m_0}\leq F\leq \frac{3\,L_\Omega}{m_0}\Big\}\,. 
\end{equation*}
Therefore, by using \eqref{grafo1}, \eqref{bdef:om} \eqref{iop}, \eqref{coervic:Fm} and \eqref{contradiction:3}, we obtain
\begin{equation*}
    \begin{split}
        \frac{3\,L_\Omega}{m_k} & \geq F^i\Big((y^k)',\phi^i\big((y^k)' \big) \Big)-F^i_{m_k}\Big((y^k)',\phi^i\big((y^k)' \big) \Big)=-F^i_{m_k}\Big((y^k)',\phi^i\big((y^k)' \big) \Big)
        \\
        & =F^i_{m_k}\Big((y^k)',\psi^i_{m_k}\big((y^k)' \big)\Big)-F^i_{m_k}\Big((y')^k,\phi^i\big((y')^k \big) \Big)
        \\
        & =\Bigg(\int_0^1\frac{\partial F^i_{m_k}}{\partial y_n}\Big((y^k)',t\,\psi^i_{m_k}\big((y^k)'\big)+(1-t)\,\phi^i\big( (y^k)'\big) \Big)\,dt \Bigg)\,\Big[\psi^i_{m_k}\big((y^k)'\big)-\phi^i \big((y^k)'\big) \Big]
        \\
        &> \frac{1}{2\sqrt{1+L_\Omega^2}}\,\frac{6\,L_\Omega\,\sqrt{1+L^2}}{m_k}=\frac{3\,L_\Omega}{m_k}\,,\quad\text{for all $k\geq k_0$ large enough,}
    \end{split}
\end{equation*}
which is a contradiction, hence \eqref{uniffff} holds true.
\medskip

Now, recalling that $\{K^j_{2\varepsilon_0}\}_{j=1}^N$ is an open cover of $\partial \Omega$ and $\partial \Omega_m$, from \eqref{grafo1}, \eqref{grafo2} and \eqref{uniffff}, one can easily obtain that 
\begin{equation*}
    \mathrm{dist}_\H\big(\partial \Omega_m,\partial \Omega)\leq\frac{6\,L_\Omega\sqrt{1+L_\Omega^2}}{m}\,.
\end{equation*}
This convergence property in the sense of Hausdorff immediately implies that $d_{\Omega_m}\leq c(n)\,d_\Omega$, and $\lim_{m\to\infty}|\Omega_m\setminus\Omega|=0$ --see for instance \cite[Proposition 2.2.23]{henrot}-- and thus \eqref{diameters}, \eqref{leb:distance} and \eqref{hauss:dist} are proven.
\medskip

Let us now introduce the transition maps related to the local charts of $\partial \Omega$ and $\partial \Omega_m$.

First of all, note that thanks to \eqref{grafo2}, we have 
\begin{equation}\label{prep:transition}
    \begin{split}
        \partial \Omega_m\cap K^i_{\varepsilon_0}\cap K^j_{\varepsilon_0}=(T^i)^{-1} & G_{\psi^i_m}\cap K^j_{\varepsilon_0}=(T^j)^{-1}G_{\psi^j_m}\cap K^i_{\varepsilon_0} 
        \\
        & \text{and}
        \\
         \Omega_m\cap K^j_{\varepsilon_0} \cap K^i_{\varepsilon_0} =(T^i)^{-1} & S_{\psi^i_m}\cap K^j_{\varepsilon_0}\cap K^i_{\varepsilon_0}  =(T^j)^{-1}S_{\psi^j_m}\cap K^i_{\varepsilon_0} \cap K^j_{\varepsilon_0}\,,  
    \end{split}
\end{equation}
whenever $\partial \Omega_m\cap K^i_{\varepsilon_0}\cap K^j_{\varepsilon_0}\neq \emptyset$.

For all $i\in\{1,\dots,N\}$, we define the set of indexes
\begin{equation*}
    \mathcal{I}_i\coloneqq\big\{ j\in\{1,\dots,N\}\,:\,\partial \Omega\cap K^i_{2\varepsilon_0}\cap K^j_{2\varepsilon_0}\neq \emptyset\big\}\,.
\end{equation*}
If $j\in \mathcal{I}_i$, then owing to \eqref{grafo1} there exists $y'\in B'_{R_\Omega-2\varepsilon_0}$ such that $(T^i)^{-1}\big(y',\phi^i(y')\big)\in \partial \Omega\cap K^j_{2\varepsilon_0}$. Since $\phi^j$ is $L_\Omega$-Lipschitz continuous and $\phi^j(0')=0$, we have $|\phi^j(z')|\leq L_\Omega\,|z'|$, so it follows from \eqref{prep:transition0}, \eqref{grafo2} and \eqref{uniffff} that 
$(T^i)^{-1}\big(y',\psi^i_m(y')\big)\in \partial \Omega_m\cap K^i_{\varepsilon_0}\cap K^j_{\varepsilon_0}$ for all $m\geq m_0$ large enough.

Henceforth, for all $j\in \mathcal{I}_i$,  \eqref{prep:transition0}  and \eqref{prep:transition} allow us to define the transition maps $\C^{i,j}, \C^{i,j}_m$  from $\phi^i$ to $\phi^j$ and from $\psi^i_m$ to $\psi^j_m$ respectively, i.e.
\begin{equation}
    \begin{split}
    \C^{i,j}y' & =\Pi\,T^j(T^i)^{-1}\big(y',\phi^i(y')\big)
    \\
    \C^{i,j}_m y' & =\Pi\,T^j(T^i)^{-1}\big(y',\psi^i_m(y')\big)\,,
    \end{split}
\end{equation}
which are defined on the open sets
\begin{equation*}
    \begin{split}
    U^{i,j}  =\Pi\,\Big(G_{\phi^i}\cap T^i\,K^j_0\Big)\quad\text{and}\quad
    U^{i,j}_m  =\Pi\,\Big(G_{\psi^i_m}\cap T^i\,K^j_{\varepsilon_0}\Big)\,.
    \end{split}
\end{equation*}
In particular, by their definitions and the arguments of Section \ref{sec:tras}, we may write
\begin{equation}\label{ij:transition}
\begin{split}
     x=(T^i)^{-1}\big(y',\phi^i(y')\big)=(T^j)^{-1}\big(\C^{i,j}y',\phi^j(\C^{i,j}y')\big)\quad & \text{for $x\in \partial \Omega\cap K^i_0\cap K^j_0$}
    \\
     x^m=(T^i)^{-1}\big(y',\psi^i_m(y')\big)=(T^j)^{-1}\big(\C_m^{i,j}y',\psi_m^j(\C^{i,j}_my')\big)\quad & \text{for $x^m\in \partial \Omega_m\cap K^i_{\varepsilon_0}\cap K^j_{\varepsilon_0}$.}
\end{split}
\end{equation}
and their inverse functions are $(\C^{i,j})^{-1}=\C^{j,i}$ and $(\C^{i,j}_m)^{-1}=\C^{j,i}_m$.
Observe also that $\C^{i,i}=\C^{i,i}_m=\mathrm{Id}$.

Furthermore, since $\mathrm{supp}\,\xi_j\Subset B_{R_\Omega/4}(x^j)\Subset K^j_{2\varepsilon_0}$, it follows from the definition of $\mathcal{I}_i$ and \eqref{uniffff} that 
\begin{equation}\label{non:Ii}
    \xi_j\big( (T^i)^{-1}(y',\phi^i(y'))\big)=\xi_j\big( (T^i)^{-1}(y',\psi^i_m(y'))\big)=0\quad\text{if $j\not\in \mathcal{I}_i$,}
\end{equation}
for all  $y'\in B'_{R_\Omega-\varepsilon_0}$, and all $m\geq m_0$.

We now claim that for all $j\in\mathcal{I}_i$, there exists an open set $V^{i,j}\subset B'_{R_\Omega-2\varepsilon_0}$ for which we have 
\begin{equation}\label{nulla:partiz}
    \xi_j\big( (T^i)^{-1}(y',\phi^i(y'))\big)=\xi_j\big( (T^i)^{-1}(y',\psi^i_m(y'))\big)=0\quad\text{if $y'\not\in V^{i,j}$,}
\end{equation}
and such that $V^{i,j}\subset U^{i,j}\cap U^{i,j}_m$ for all $m>m_0$. This in particular implies that both $\C^{i,j}$ and $\C^{i,j}_m$ are defined on $V^{i,j}$.

To this end, let
\begin{equation*}
    V^{i,j}\coloneqq \Pi\Big( G_{\phi^i}\cap T^i K^j_{2\varepsilon_0}\Big)\cap B'_{R_\Omega-2\varepsilon_0}\,.
\end{equation*}
Then, owing to \eqref{uniffff} it is immediate to verify that
\begin{equation}\label{mam:mia}
   B'_{R_\Omega-2\varepsilon_0}\cap\bigg( \Pi\Big( G_{\phi^i}\cap T^i B_{R_\Omega/4}(x^j)\Big)\cup \Pi\Big( G_{\psi_m^i}\cap T^i B_{R_\Omega/4}(x^j)\Big)\bigg)\Subset V^{i,j}\,,
\end{equation}
whenever $m>m_0$ is large enough, and thus \eqref{nulla:partiz} is satisfied by our choice of set $V^{i,j}$.

Clearly  $V^{i,j}\subset U^{i,j}$, so we are left to verify that $V^{i,j}\subset U^{i,j}_m$.
To this end, let $y'\in V^{i,j}$; then by \eqref{prep:transition} and \eqref{ij:transition} we may write
\begin{equation*}
    T^j(T^i)^{-1}(y',\phi^i(y'))=(\C^{i,j}y',\phi^j(\C^{i,j}y'))\in B'_{R_\Omega-2\varepsilon_0}\times \big(-L_\Omega(R_\Omega-2\varepsilon_0),L_\Omega(R_\Omega-2\varepsilon_0)\big)\,,
\end{equation*}
 where in the latter inclusion we made use of the inequality $|\phi^j(z')|\leq L_\Omega\,|z'|$.
 Therefore, thanks to \eqref{uniffff}, for $m>m_0$ we  have $(T^i)^{-1}(y',\psi^i_m(y'))\in\partial \Omega_m\cap  K^i_{\varepsilon_0}\cap K^j_{2\varepsilon_0}$, hence $y'\in U^{i,j}_m$ by \eqref{prep:transition} and the definition of $U^{i,j}_m$, so the claim is proven.

We also remark that
\begin{equation}\label{tuttovij}
    \bigcup_{j\in\mathcal{I}_i}V^{i,j}=B'_{R_\Omega-2\varepsilon_0}\,,
\end{equation}
since $\{T^i K^j_{2\varepsilon_0}\}_{j\in \mathcal{I}_i}$ is an open cover of $G_{\phi^i}\cap K^i_{2\varepsilon_0}$, and the projection map $\Pi$ is a homeomorphism from $G_{\phi^i}$ (with the induced topology) to $B'_{R_\Omega}$.

Moreover, owing to \eqref{uniffff} and by proceeding as in the derivation of \eqref{uoaaa1}, we obtain
\begin{equation}\label{uoaaa2}
    \|\C^{i,j}_m-\C^{i,j}\|_{L^\infty(V^{i,j})}\leq \frac{6\,L_\Omega\sqrt{1+L_\Omega^2}}{m}\quad\text{for all $m>m_0$.}
\end{equation}




Our next goal is to obtain estimates on $\nabla \psi^i_m$. To this end, we differentiate equation $F_m^i(y',\psi^i_m(y'))=0$ with respect to $y'_k$, for $k=1,\dots,n-1$, and recalling \eqref{non:Ii}  we
 find
\begin{equation}\label{form:derpsi}
\frac{\partial\psi^i_m }{\partial y'_k}(y')=-\bigg(\frac{\partial F_m^i(y',\psi^i_m(y'))}{\partial y_n}\bigg)^{-1}\sum_{j\in \mathcal{I}_i}\Bigg\{ \frac{\partial f^j_m(x^m)}{\partial y_k'}\,\xi_j(x^m)+f_m^j(x^m)\,\frac{\partial \xi_j(x^m)}{\partial y_k'}\Bigg\}\,,
\end{equation}
where $x^m=(T^i)^{-1}\big(y',\psi^i_m(y')\big)$, $y'\in B'_{R_\Omega-2\varepsilon_0}$. 

For all $l=1,\dots,n$, by using the chain rule and recalling the definition of $\C^{i,j}_m$, we find
\begin{equation}\label{fm:der}
\begin{split}
     & \frac{\partial f^i_m}{\partial y'_l}(x^m)=-\frac{\partial \phi_m^i}{\partial y'_l}(y')\quad \text{and}\quad\frac{\partial f^i_m}{\partial y_n}(x^m)=1
     \\
      & \frac{\partial f^j_m}{\partial y_l}(x^m)=\mathcal{R}^i(\mathcal{R}^j)^t_{nl}-\sum_{r=1}^{n-1}\frac{\partial \phi^j_m}{\partial z'_r}(\C^{i,j}_m y')\,\mathcal{R}^i(\mathcal{R}^j)^t_{rl}\,,
     \end{split}
\end{equation}
for all $j\in \mathcal{I}_i$ such that $x^m\in \mathrm{supp}\,\xi_j$. Since $\phi^j_m$ are $L_\Omega$-Lipschitz continuous, from \eqref{fm:der} it follows that
\begin{equation}\label{nfjxm}
   \sum_{l=1}^n \Big|\frac{ \partial f^j_m(x^m)}{\partial y_l}\Big|\leq c(n)(1+L_\Omega),\quad\text{for all $j\in\mathcal{I}_i$.}
\end{equation}
 Moreover, from \eqref{efjmef}, 
 \eqref{uniffff} and \eqref{edeome}, we find that \mbox{$f^j_m(x^m)\,|\nabla\xi_j(x^m)|\xrightarrow{m\to \infty} f^j(x^0)\,|\nabla\xi_j(x^0)|=0$}, where $x^0=(T^i)^{-1}\big(y',\phi^i(y')\big)\in \partial \Omega$.

By making use of this piece of information, \eqref{nfjxm} and \eqref{coervic:Fm}, from \eqref{form:derpsi} we finally obtain the gradient estimate
\begin{equation}\label{lipschitz:m}
    |\nabla \psi^i_m(y')|\leq c(n)\big(1+L_\Omega^2\big)\,,\quad\text{for all $y'\in B'_{R_\Omega-2\varepsilon_0}$,}
\end{equation}
 for all $i=1,\dots,N$ and $m>m_0$ large enough. In particular, owing to \eqref{uniffff}, \eqref{grafo2} and \eqref{lipschitz:m}, it is readily seen that $\Omega_m$ are $\mathcal{L}_{\Omega_m}$-Lipschitz domains, with
 \begin{equation*}
     L_{\Omega_m}\leq c(n)\big(1+L_\Omega^2\big)\quad\text{and}\quad R_{\Omega_m}\geq \frac{R_\Omega}{c(n)\,\big(1+L_\Omega^2\big)}\,,
 \end{equation*}
 and \eqref{lip:car} is proven.
\medskip

Next, the definition of $\C^{i,j}$ and $\C^{i,j}_m$, \eqref{lipschitz:m} and the $L_\Omega$-Lipschitz continuity of $\phi^i$ imply
\begin{equation}\label{lip:transmap}
 \sup_{i=1,\dots,N}\sup_{j\in\mathcal{I}_i}\Big\{\|\nabla \C^{i,j}\|_{L^\infty}+\|\nabla \C^{i,j}_m\|_{L^\infty}\Big\}\leq c(n)(1+L_\Omega^2)\quad\text{for all $m>m_0$,} 
\end{equation}
and in particular $\C^{i,j}$ and $\C^{i,j}_m$ are uniformly bi-Lipschitz transformations.

Hence, thanks to \eqref{uoaaa2} and \eqref{lip:transmap}, we are in the position to apply Proposition \ref{prop:convoluz} and get
\begin{equation}\label{puntual}
    \frac{\partial \phi^j_m}{\partial z'_r}(\C^{i,j}_m y')\xrightarrow{m\to\infty} \frac{\partial \phi^j}{\partial z'_r}(\C^{i,j}y')\quad\text{for $\H^{n-1}$-a.e. $y'\in V^{i,j}$.}
\end{equation}
From this, \eqref{coervic:Fm}, \eqref{nulla:partiz},  \eqref{tuttovij},  \eqref{fm:der} and identity \eqref{form:derpsi}
 we find
 \begin{equation*}
     \nabla\psi_m^i(y')\xrightarrow{m\to\infty} G(y')\quad\text{for $\H^{n-1}$-a.e. $y'\in B'_{R_\Omega-2\varepsilon_0}$}\,,
 \end{equation*}
where $G$ is a bounded vector valued function which can be explictly written. From \eqref{lipschitz:m} and on applying dominated convergence theorem, we get that $\nabla \psi^i_m\xrightarrow{m\to\infty} G$ in $L^p(B'_{R-2\varepsilon_0})$ for all $p\in [1,\infty)$. On the other hand, \eqref{uniffff} and the uniqueness of the distributional limit imply that $G=\nabla \phi^i$, hence \eqref{lchart:conv1} is proven. 

\subsection{Curvature convergence} Assume now that $\partial \Omega\in W^{2,q}$ for some $q\in [1,\infty)$. Then the local charts $\phi^i\in W^{2,q}(B'_{R_\Omega})$.

We differentiate twice the identity $F_m^i(y',\psi^i_m(y'))=0$ with respect to $y'_k\,y'_l$ for $k,l=1,\dots n-1$, and find
\begin{equation}\label{june77}
    \begin{split}
        \frac{\partial^2 \psi^i_m}{\partial y_k'\partial y_l'}(y')=-\bigg(\frac{\partial F^i_m(y',\psi^i_m(y'))}{\partial y_n}\bigg)^{-1}\Bigg\{ & \frac{\partial^2 F^i_m (y',\psi^i_m(y'))}{\partial y_k'\partial y_l'}  +\frac{\partial^2 F_m^i (y',\psi^i_m(y'))}{\partial y_l'\partial y_n}\,\frac{\partial \psi^i_m}{\partial y_k'}(y')+
        \\
        & +\frac{\partial^2 F_m^i (y',\psi^i_m(y'))}{\partial y_k'\partial y_n}\,\frac{\partial \psi^i_m}{\partial y_l'}(y') + 
        \\
        &
        +\frac{\partial^2 F_m^i (y',\psi^i_m(y'))}{\partial y_n\partial y_n}\,\frac{\partial \psi^i_m}{\partial y_k'}(y')\,\frac{\partial \psi^i_m}{\partial y_l'}(y')\Bigg\}\,.
    \end{split}
\end{equation}

Elementary computations and \eqref{non:Ii} show that, for $l,r=1,\dots n$, we have
\begin{equation}\label{june888}
\begin{split}
    \frac{\partial^2 F^i_m}{\partial y_r\partial y_l}(y',\psi^i_m(y'))=\sum_{j\in\mathcal{I}_i} & \Bigg\{\frac{\partial^2 f^j_m }{\partial y_r\partial y_l}(x^m)\,\xi_j(x^m)+\frac{\partial f^j_m}{\partial y_r}(x^m)\,\frac{\partial \xi_j}{\partial y_l}(x^m)
    \\ &+\frac{\partial f^j_m}{\partial y_l}(x^m)\,\frac{\partial \xi_j}{\partial y_r}(x^m)+f^j_m(x^m)\,\frac{\partial^2\xi_j}{\partial y_r\partial y_r}(x^m)\Bigg\},
    \end{split}
\end{equation}
where $x^m=(T^i)^{-1}(y',\psi^i_m(y'))$. We also have
\begin{equation}\label{june999}
    \frac{\partial^2 f^j_m}{\partial y_r\partial y_l}(x^m)=-\sum_{s,t=1}^{n-1}\frac{\partial^2 \phi^j_m}{\partial z'_s\partial z'_t}(\C^{i,j}_m y')\,\big(\mathcal{R}^i(\mathcal{R}^j)^t\big)_{rs}\big(\mathcal{R}^i(\mathcal{R}^j)^t\big)_{lt}
\end{equation}
for all $j\in \mathcal{I}_i$ such that  $x^m\in\mathrm{supp}\,\xi_j$.

Thanks to \eqref{efjmef},    \eqref{uniffff} and \eqref{edeome}, we readily find that $f^j_m(x^m)\,|\nabla \xi_j(x^m)|\to 0$ and \mbox{$f^j_m(x^m)\,|\nabla^2\xi_j(x^m)|\to 0$.}  From this, and by using \eqref{deriv:xi}, \eqref{coervic:Fm}, \eqref{nfjxm}, \eqref{lipschitz:m} and \eqref{june77}-\eqref{june999}, we obtain
\begin{equation}\label{stima:hessiana}
    |\nabla^2 \psi^i_m(y')|\leq c(n)(1+L_\Omega^5)\,\sum_{j\in\mathcal{I}_i}\bigg\{ |\nabla^2 \phi^j_m|(\C^{i,j}_m y')\,\xi_j\big((T^i)^{-1}(y',\psi^i_m(y')\big)+\frac{(1+L_\Omega)}{R_\Omega}\bigg\}\,,
\end{equation}
for all $y'\in B'_{R_\Omega-2\varepsilon_0}$, provided $m>m_0$ is large enough.

Then again, thanks to \eqref{uoaaa2} and\eqref{lip:transmap}, we may apply Proposition \ref{prop:convoluz} and infer
\begin{equation}\label{uoaaa3}
    \nabla^2 \phi^j_m(\C^{i,j}_m y')\to \nabla^2 \phi^j(\C^{i,j}y')\quad\text{for $\H^{n-1}$-a.e. $y'\in V^{i,j}$ and in $L^q(V^{i,j})$.}
\end{equation}

Finally, recalling \eqref{nulla:partiz} and \eqref{tuttovij}, the properties \eqref{coervic:Fm}, \eqref{uniffff}, \eqref{fm:der}, \eqref{puntual},  \eqref{june77}-\eqref{uoaaa3} and dominated convergence Theorem \ref{thm:dc} entail
\begin{equation*}
    \nabla^2 \psi_m^i\to M\,,\quad\text{$\H^{n-1}$-a.e. on $B'_{R_\Omega-2\varepsilon_0}$ and in $L^q(B'_{R_\Omega-2\varepsilon_0})$}\,,
\end{equation*}
for some matrix valued function $M$, which can be explictly written in terms of $\phi^j,\nabla \phi^j,\nabla^2 \phi^j$ and $\xi_j$. On the other hand,  \eqref{uniffff} and the uniqueness of the distributional limit imply that $M=\nabla^2 \phi^i$, hence \eqref{lchart:conv2} is proven.

\subsection{Proof of the isocapacitary estimate \texorpdfstring{\eqref{iscap:omm'}}{}}

In the following subsection, we will denote by $\widetilde{M}_m(h)$ the convolution of a function $h\in L^1_{loc}(\R^n)$
 with respect to the first $(n-1)$-variables, i.e.
\begin{equation*}
    \widetilde{M}_m(h)(z',z_n)=\int_{\R^{n-1}}h(x',z_n)\,\rho_m(z'-x')\,dx'\,.
\end{equation*}

We then have the following elementary lemma, which will be useful later.
\begin{lemma}\label{lem:radice}
    Let $v\in C^{\infty}_c(\R^n)$. Then, if we set \begin{equation*}
        \widetilde{v}_m\coloneqq \sqrt{\widetilde{M}_m(v^2)}\,,
\end{equation*}
we have that $\widetilde{v}_m$ is Lipschitz continuous on $\R^n$, and
\begin{equation}
    |\nabla \widetilde{v}_m|\leq c(n)\,\sqrt{\widetilde{M}_m\big(|\nabla v|^2\big)}\quad\text{a.e. on $\R^n$.}
\end{equation}
\end{lemma}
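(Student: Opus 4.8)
The plan is to reduce everything to a pointwise differential inequality for the smooth nonnegative function $g:=\widetilde{M}_m(v^2)$ and then to pass to its square root by a routine $\varepsilon$–regularisation. First I would record that $g\in C^\infty_c(\R^n)$: writing $\mathrm{supp}\,v\subset B_\rho$, differentiation under the integral sign in the variable $z_n$ together with the smoothing effect of the convolution in $z'$ gives $g\in C^\infty$, while $g(z',z_n)\neq 0$ forces $|z_n|<\rho$ and $|z'|<\rho+\tfrac1m$. Of course $g\geq 0$ and $\widetilde v_m=\sqrt g$, so the only issue is the degeneracy of the square root on $\{g=0\}$.

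The key step is the estimate
\begin{equation*}
    |\nabla g(z)|^2\leq 4\,g(z)\,\widetilde{M}_m\big(|\nabla v|^2\big)(z)\qquad\text{for all }z\in\R^n .
\end{equation*}
To prove it I would, for $k=1,\dots,n-1$, transfer the $z_k$–derivative onto the kernel and then integrate by parts in $x_k$ (no boundary terms appear, both $v$ and $\rho_m(z'-\cdot)$ having compact support), obtaining $\partial_{z_k}g=\widetilde{M}_m\big(2v\,\partial_{x_k}v\big)$; for $k=n$ one differentiates directly under the integral, $\partial_{z_n}g=\widetilde{M}_m\big(2v\,\partial_{z_n}v\big)$. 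Applying the Cauchy–Schwarz inequality with respect to the probability measure $\rho_m(z'-x')\,dx'$ (recall $\int_{\R^{n-1}}\rho_m=1$) gives $|\partial_{z_k}g|\leq 2\sqrt{g}\,\sqrt{\widetilde{M}_m\big((\partial_k v)^2\big)}$ for each $k$, and summing the squares over $k$ and using the linearity of $\widetilde{M}_m$ yields the displayed inequality; in particular $|\nabla g|^2\leq 4\,\|\nabla v\|_{L^\infty(\R^n)}^2\,g$ everywhere on $\R^n$.

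From this I would conclude as follows. For $\varepsilon>0$ the smooth function $\sqrt{g+\varepsilon}$ satisfies
\begin{equation*}
    \big|\nabla\sqrt{g+\varepsilon}\,\big|=\frac{|\nabla g|}{2\sqrt{g+\varepsilon}}\leq\frac{2\sqrt g\,\sqrt{\widetilde{M}_m(|\nabla v|^2)}}{2\sqrt{g+\varepsilon}}\leq\sqrt{\widetilde{M}_m\big(|\nabla v|^2\big)}\leq\|\nabla v\|_{L^\infty(\R^n)} ,
\end{equation*}
hence $\sqrt{g+\varepsilon}$ is $\|\nabla v\|_{L^\infty}$–Lipschitz uniformly in $\varepsilon$; since $0\leq\sqrt{g+\varepsilon}-\sqrt g\leq\sqrt\varepsilon\to0$ uniformly, $\widetilde v_m=\sqrt g$ is Lipschitz on $\R^n$. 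For the gradient bound, Rademacher's theorem makes $\widetilde v_m$ differentiable a.e.; on the open set $\{g>0\}$ the chain rule and the key inequality give $|\nabla\widetilde v_m|=\tfrac{|\nabla g|}{2\sqrt g}\leq\sqrt{\widetilde{M}_m(|\nabla v|^2)}$, while on $\{g=0\}=\{\widetilde v_m=0\}$ one has $\nabla\widetilde v_m=0$ a.e.\ (a Lipschitz function has vanishing gradient a.e.\ on each of its level sets) and the right-hand side is nonnegative. Thus $|\nabla\widetilde v_m|\leq\sqrt{\widetilde{M}_m(|\nabla v|^2)}$ a.e.\ on $\R^n$, which is the asserted estimate (one may in fact take $c(n)=1$).

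The only genuine obstacle is the lack of Lipschitz regularity of $t\mapsto\sqrt t$ at the origin, i.e.\ the behaviour of $\widetilde v_m$ near $\{g=0\}$; the differential inequality $|\nabla g|^2\leq 4\|\nabla v\|_{L^\infty}^2\,g$ of the second step is exactly the tool that compensates for this, and the $\varepsilon$–regularisation of the third step turns it into a genuine uniform Lipschitz bound. Everything else is a routine computation.
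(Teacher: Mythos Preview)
Your proof is correct and follows essentially the same route as the paper's: both establish the componentwise bound $|\partial_k\widetilde{M}_m(v^2)|\le 2\sqrt{\widetilde{M}_m(v^2)}\sqrt{\widetilde{M}_m((\partial_k v)^2)}$ via Cauchy--Schwarz (the paper calls it H\"older), then use the $\varepsilon$--regularisation $\sqrt{\widetilde{M}_m(v^2)+\varepsilon}$ to obtain a uniform Lipschitz bound and pass to the limit. Your treatment is slightly more explicit in handling the level set $\{g=0\}$ and in noting that one may take $c(n)=1$, but the argument is the same.
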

\begin{proof}
    By H\"older's inequality, for $k=1,\dots,n$ we have
    \begin{equation*}
        \bigg|\frac{\partial \widetilde{M}_m(v^2)}{\partial x_k}\bigg|=\bigg|\widetilde{M}_m\bigg(\frac{\partial v^2}{\partial x_k}\bigg)\bigg|=2\,\bigg|\widetilde{M}_m\bigg(v\,\frac{\partial v}{\partial x_k}\bigg)\bigg|\leq 2\,\sqrt{\widetilde{M}_m(v^2)}\,\sqrt{\widetilde{M}_m\bigg(\bigg|\frac{\partial v}{\partial x_k}\bigg|^2\bigg)}\,.
    \end{equation*}
Therefore, on setting $\widetilde{v}_{\varepsilon,m}\coloneqq \sqrt{\varepsilon^2+\widetilde{M}_m(v^2)}$, for all $\varepsilon\in (0,1)$ we have that
\begin{equation}\label{zach}
    |\nabla \widetilde{v}_{\varepsilon,m}|=\frac{\big|\nabla \widetilde{M}_m(v^2)\big|}{2\sqrt{\varepsilon^2+\widetilde{M}_m(v^2)}}\leq c(n)\,\frac{\sqrt{\widetilde{M}_m(v^2)}\,\sqrt{\widetilde{M}_m(|\nabla v|^2)}}{\sqrt{\varepsilon^2+\widetilde{M}_m(v^2)}}\leq c(n)\,\sqrt{\widetilde{M}_m\big(|\nabla v|^2\big)}\,.
\end{equation}
Thus, the sequence $\{\widetilde {v}_{\varepsilon,m}\}_{\varepsilon\in (0,1)}$ is uniformly bounded in $C^{0,1}_c(\R^n)$, and since $\widetilde {v}_{\varepsilon,m}\xrightarrow{\varepsilon\to 0^+} \widetilde {v}_m$ on $\R^n$, we deduce that $\widetilde {v}\in C^{0,1}_c(\R^n)$ by weak-$\ast$ compactness, and the thesis follows by letting $\varepsilon\to 0$ in \eqref{zach} and by Rademacher's Theorem. 
\end{proof}

Now let $x^0_m\in \partial \Omega_m$; then owing to \eqref{conenuto} and \eqref{iop}, there exists $i\in\{1,\dots,N\}$ such that $x^0_m\in B_{R_\Omega/8}(x^i)$. Therefore, we may write $x^0_m=(T^i)^{-1}\Big((y^0)',\psi^i_m\big((y^0)'\big)\Big)$ for some $(y^0)'\in B'_{R_\Omega/8}$, and we also set $x^0\coloneqq (T^i)^{-1}\Big((y^0)',\phi^i\big((y^0)'\big)\Big)\in \partial \Omega$. Let
$$
r_0\coloneqq \frac{R_\Omega}{C(n)\,\big(1+L_\Omega^2 \big)}\,,
$$
for some fixed constant $C(n)>1$ large enough, and consider
$r\leq r_0$, and $v\in C^\infty_c\big(B_r(x^0_m)\big)$. Then, since $B_r(x^0_m)\Subset B_{R_\Omega/4}(x^i)\Subset K^i_{2\varepsilon_0}$, we have
\begin{equation*}
    \int_{\partial \Omega_m}v^2\,|\B_{\Omega_m}|\,d\H^{n-1}=\int_{B'_{R_\Omega/4}}v^2\Big((T^i)^{-1}\big(y',\psi^i_m(y')\big)\Big)|\B_{\Omega_m}(y')|\,\sqrt{1+|\nabla\psi^i_m(y')|^2}\,dy'.
\end{equation*}

Consider the new set of indices
\begin{equation*}
    \mathbb{J}^{x^0_m}_r\coloneqq  \big\{j\in \mathcal{I}_i\,:\,B_r(x^0_m)\cap \mathrm{supp}\,\xi_j\neq \emptyset   \big\}\,.
\end{equation*}
Owing to \eqref{BBB}, \eqref{ij:transition}, \eqref{nulla:partiz},  \eqref{lipschitz:m} and the Hessian estimate \eqref{stima:hessiana}, we obtain
\begin{equation}\label{kjh1}
    \begin{split}
    &\int_{\partial \Omega_m}v^2\,|\B_{\Omega_m}|\,d\H^{n-1}  \leq\sqrt{1+L_\Omega^2}\, \int_{B'_{R_\Omega/4}}v^2\Big((T^i)^{-1}\big(y',\psi^i_m(y')\big)\Big)\,|\nabla^2 \psi^i_m(y')|\,dy'
       \\
       & \leq c(n)\,(1+L_\Omega^6)\,\sum_{j\in\mathbb{J}_r^{x^0_m}} \int_{V^{i,j}}\Bigg\{v^2\Big((T^j)^{-1}\big(\C^{i,j}_m y',\psi^j_m(\C^{i,j}_m y')\big)\Big)\times
       \\
       &\hspace{4cm} \times \xi_j\Big((T^j)^{-1}\big(\C^{i,j}_m y',\psi^j_m(\C^{i,j}_m y')\big)\Big)\,M_m\big(|\nabla^2 \phi^j|\big)(\C^{i,j}_m y')\Bigg\}\,dy'
       \\
       &\hspace{1cm}+c(n)\,\frac{(1+L^7_\Omega)}{R_\Omega}\,|\mathbb{J}_r^{x^0_m}|\int_{B'_{R/4}}v^2\Big((T^i)^{-1}\big( y',\psi^i_m( y')\big)\Big)dy'\,.
    \end{split}
\end{equation}
By using $|\mathbb{J}_r^{x^0_m}|\leq N$,  \eqref{card:NN}, \eqref{lip:car} and the results of \cite[Corollary 6.6]{accfm}, we get
\begin{equation}\label{limmitato:v}
    \begin{split}
     & \frac{(1+L^7_\Omega)}{R_\Omega}\,|\mathbb{J}_r^{x^0_m}|\int_{B'_{R_\Omega/4}}v^2\Big((T^i)^{-1}\big( y',\psi^j_m(y')\big)\Big)dy'\leq c(n)\frac{(1+L^7_\Omega)\,d_\Omega^n}{R_\Omega^{n+1}}\int_{\partial \Omega_m} v^2\,d\H^{n-1}
    \\
    &\hspace{3cm}  
 \leq \begin{dcases}
        c'(n)\,\frac{(1+L_\Omega^{25})\,d_\Omega^n}{R_\Omega^{n+1}}\,\bigg(\int_{\R^n}|\nabla v|^2\,dx\bigg)\,r\quad & \text{if $n\geq 3$}
        \\
        c\,\frac{(1+L_\Omega^{31})\,d_\Omega^n}{R_\Omega^{n+1}}\bigg(\int_{\R^2}|\nabla v|^2\,dx\bigg)\,r\,\log\Big(1+\frac{1}{r}\Big)\quad& \text{if $n=2$.}
    \end{dcases}
    \end{split}
\end{equation}
On the other hand, via the change of variables $z'=\C^{i,j}_m y'$, by making use of \eqref{lip:transmap}, \eqref{mam:mia}, and observing that $B_r(x^0_m)\Subset K^i_{2\varepsilon_0}\cap K^j_{2\varepsilon_0}$ for all $j\in \mathbb{J}^{x^0_m}_r$,  $x^0_m\in \partial \Omega_m$ and $r\leq r_0$, we find
\begin{equation}\label{kjh2}
    \begin{split}
        & \int_{V^{i,j}}\Bigg\{v^2\Big((T^j)^{-1}\big(\C^{i,j}_m y',\psi^j_m(\C^{i,j}_m y')\big)\Big)\,\xi_j\Big((T^j)^{-1}\big(\C^{i,j}_m y',\psi^j_m(\C^{i,j}_m y')\big)\Big)\,M_m\big(|\nabla^2 \phi^j|\big)(\C^{i,j}_m y')\Bigg\}\,dy'
        \\
        & \leq c(n)(1+L_\Omega^{(n-1)})\,\int_{W^{i,j}} w^2_{j,m}(z',0)\,M_m\big(|\nabla^2\phi^j|\big)(z')\,dz'\,,
    \end{split}
\end{equation}
for some open set $W^{i,j}\Subset \C^{i,j}(U^{i,j}) $, where we also set
\begin{equation*}
    w_{j,m}(z',z_n)\coloneqq v\Big((T^j)^{-1}\big(z' ,z_n+\psi^j_m( z')\big)\Big)\,.
\end{equation*}
Since $v\in C^\infty_c(B_r(x^0_m))$ and  $x^0_m=(T^j)^{-1}\Big( \C^{i,j}_m\big((y^0)'\big),\psi^j_m\big((y^0)'\big)\Big)$ for all $j\in \mathbb{J}_r^{x^0_m}$, by using \eqref{lipschitz:m} it is readily seen that
\begin{equation*}
    w_{j,m}\in C^\infty_c\bigg(B_{c(n)(1+L_\Omega^2)\,r}\Big(\C^{i,j}_m\big((y^0)'\big),0  \Big)\bigg)\,, 
\end{equation*}
and from the chain rule we find
\begin{equation}\label{gradee:1}
    |\nabla w_{j,m}(z',z_n)|\leq c(n)(1+L_\Omega^2)\,\Big|\nabla v\Big((T^{j})^{-1}\big(z',z_n+\psi_m^j(z')\big)\Big)\Big|
\end{equation}
Next, by using  Fubini-Tonelli's Theorem we obtain
\begin{equation*}
    \begin{split}\int_{W^{i,j}} w^2_{j,m}(z',0)\,M_m\big(|\nabla^2\phi^j|\big)(z')\,dz'& =\int_{W^{i,j}}w_{j,m}^2(z',0)\int_{B'_{1/m}(z')}|\nabla^2 \phi^j(\tilde{z}')|\,\rho_{m}(z'-\tilde{z}')\,d\tilde{z}'\,dz'
    \\
    &\leq \int\limits_{W^{i,j}+B'_{1/m}}|\nabla^2\phi^j(\tilde{z}')|\Big(\int_{B'_{1/m(\tilde{z}')}}w^2_{j,m}(z',0)\,\rho_m(\tilde{z}'-z')\,dz'\Big)\,d\tilde{z}'\,.
    \end{split}
\end{equation*}
We have thus found that
\begin{equation}\label{kjh3}
    \int_{W^{i,j}} w^2_{j,m}(z',0)\,M_m\big(|\nabla^2\phi^j|\big)(z')\,dz'\leq \int_{\widetilde{W}^{i,j}}\widetilde{M}_m(w^2_{j,m})(z',0)\,|\nabla^2\phi^j(z')|\,dz'\,,
\end{equation}
for some open set $\widetilde{W}^{i,j}\Subset \C^{i,j}(U^{i,j})$, provided $m>m_0$ is large enough.

Thanks to Lemma \ref{lem:radice} and inequality \eqref{uoaaa2}, we easily infer 
\begin{equation*}
    \sqrt{\widetilde{M}_m(w^2_{j,m})}\in C^{0,1}_c\bigg(B_{c(n)(1+L_\Omega^2)(r+\frac{1}{m})}\Big( \C^{i,j}\big((y^0)'\big),0\Big)\bigg)\,,
\end{equation*}
and
\begin{equation}\label{gradee:2}
    \Big|\nabla \sqrt{\widetilde{M}_m(w^2_{j,m})}\Big|\leq c(n)\,\sqrt{\widetilde{M}_m\big(|\nabla w_{j,m}|^2 \big)}\quad\text{a.e. on $\R^n$.}
\end{equation}
Finally, set
\begin{equation*}
    \tilde{h}_{j,m}(x',x_n)\coloneqq \sqrt{\widetilde{M}_m(w^2_{j,m})}\Big(T^j\big(x',x_n-\phi^j(x')\big)\Big)
\end{equation*}
so that $\tilde{h}_{j,m}$ is Lipschitz continuous on $\R^n$. Moreover, thanks to \eqref{uniffff}, for all $j\in \mathbb{J}_{r}^{x^0_m}$, we have that
\begin{equation*}
    B_{c(n)(1+L_\Omega^3)(r+\frac 1 m)}(x^0)\Subset K^i_{2\varepsilon_0}\cap K^j_{2\varepsilon_0}
\end{equation*}
for all $m>m_0$ sufficiently large and  all $r\leq r_0$, and thus we may write $x^0=(T^j)^{-1}\Big(\C^{i,j}\big((y^0)'\big),\phi^j((y^0)')  \Big)$ due to \eqref{ij:transition}.
Recalling that $\phi^j$ is $L_\Omega$-Lipschitz continous, it follows that 
\begin{equation*}
    \tilde{h}_{j,m}\in C^{0,1}_c\Big(B_{c(n)(1+L_\Omega^3)(r+\frac 1 m)}(x^0) \Big)\,,
\end{equation*}
and from the chain rule
\begin{equation}\label{gradee:3}
    \big|\nabla \tilde{h}_{j,m}(x',x_n)\big|\leq c(n)(1+L_\Omega)\, \Big|\nabla \sqrt{\widetilde{M}_m(w^2_{j,m})}(x',x_n-\phi^j(x'))\Big|\quad\text{for a.e. $x$.}
\end{equation}
Owing to \eqref{BBB} and the definition of $\widetilde{h}_{j,m}$, we have
\begin{equation}\label{kjh4}
    \begin{split}
    &\int_{\widetilde{W}^{i,j}}\widetilde{M}_m(w^2_{j,m})(z',0)\,|\nabla^2\phi^j(z')|\,dz' = \int_{\widetilde{W}^{i,j}}\tilde{h}_{j,m}^2\big((T^j)^{-1}(z',\phi^j(z'))\big)\,|\nabla^2\phi^j(z')|\,dz'
    \\
    &\hspace{0.5cm} \leq c(n)(1+L_\Omega^3)\int_{\widetilde{W}^{i,j}}\tilde{h}_{j,m}^2\big((T^j)^{-1}(z',\phi^j(z'))\big)\,\big|\B_\Omega(z')\big|\sqrt{1+|\nabla \phi^j(z')|^2}\,dz'
    \\
    &\hspace{0.5cm}  =c(n)(1+L_\Omega^3)\,\int_{\partial \Omega} \tilde{h}_{j,m}^2\big|\B_\Omega\big|\,d\H^{n-1}
    \\
    &\hspace{0.5cm} \leq c(n)(1+L_\Omega^3)\,\bigg(\sup\,\frac{\int_{\partial \Omega} h^2\,\big|\B_\Omega\big|\,d\mathcal{H}^{n-1}}{\int_{\R^n}|\nabla h|^2\,dx}\bigg)\,\int_{\R^n}|\nabla \tilde{h}_{j,m}|^2\,dx\,,
    \end{split}
\end{equation}
where the supremum above is taken over all functions $h\in C^{0,1}_c\Big(B_{c(n)(1+L_\Omega^3)(r+\frac 1 m)}(x^0) \Big)$.

Henceforth, by coupling \eqref{card:NN} and 
estimates \eqref{kjh1}-\eqref{kjh4}, for all $v\in C^\infty_c\big(B_r(x^0_m)\big)$ we obtain
\begin{equation*}
    \begin{split}
        &\int_{\partial \Omega_m}v^2\,\big|\B_{\Omega_m}\big|\,d\H^{n-1}\leq c(n)\,(1+L_\Omega^{n+4})\,\bigg(\sup\frac{\int_{\partial \Omega} h^2\,\big|\B_\Omega\big|\,d\mathcal{H}^{n-1}}{\int_{\R^n}|\nabla h|^2\,dx}\bigg)\sum_{j\in\mathbb{J}_r^{x^0_m}}\int_{\R^n}\widetilde{M}_m\big( |\nabla w_{j,m}|^2\big)\,dx
        \\
        &\hspace{11cm}+\tilde{c}\,\int_{\R^n}|\nabla v|^2 dx
        \\
        &\leq c(n)\,(1+L_\Omega^{n+4})\,\bigg(\sup\frac{\int_{\partial \Omega} h^2\,\big|\B_\Omega\big|\,d\mathcal{H}^{n-1}}{\int_{\R^n}|\nabla h|^2\,dx}\bigg)\sum_{j\in\mathbb{J}_r^{x^0_m}}\int_{\R^n} |\nabla w_{j,m}|^2\,dx+\tilde{c}\,\int_{\R^n}|\nabla v|^2 dx
        \\
        &\leq c(n)\,(1+L_\Omega^{n+8})\,N\,\bigg(\sup\frac{\int_{\partial \Omega} h^2\,\big|\B_\Omega\big|\,d\mathcal{H}^{n-1}}{\int_{\R^n}|\nabla h|^2\,dx}\bigg)\,\int_{\R^n} |\nabla v|^2\,dx+\tilde{c}\,\int_{\R^n}|\nabla v|^2 dx
        \\
        &\leq c'(n)\,(1+L_\Omega^{n+8})\frac{d_\Omega^n}{R_\Omega^n}\,\bigg(\sup\frac{\int_{\partial \Omega} h^2\,\big|\B_\Omega\big|\,d\mathcal{H}^{n-1}}{\int_{\R^n}|\nabla h|^2\,dx}\bigg)\,\int_{\R^n}|\nabla v|^2 dx+\tilde{c}\,\int_{\R^n}|\nabla v|^2 dx\,,
    \end{split}
\end{equation*}
where in the second inequality we made use of Fubini-Tonelli's Theorem, the supremum above is taken over all $h\in C^{0,1}_c\Big(B_{c(n)(1+L_\Omega^3)(r+\frac 1 m)}(x^0) \Big)$, and we set
\begin{equation}\label{c:hatil}
\tilde{c}=\tilde{c}(n,L_\Omega,R_\Omega,d_\Omega,r)=
    \begin{dcases}
        c(n)\frac{(1+L_\Omega^{25})\,d_\Omega^n}{R_\Omega^{n+1}}\,r  \quad & \text{if $n\geq 3$ }
        \\
        c(n)\frac{(1+L_\Omega^{31})\,d_\Omega^n}{R_\Omega^{n+1}}\,r\,\log\Big(1+\frac{1}{r}\Big)\quad & \text{if $n=2$.}
    \end{dcases}
\end{equation}
Therefore, for all $x^0_m\in \partial \Omega_m$, $r\leq r_0$, we have found
\begin{equation*}
\begin{split}
    \sup_{v\in C^{\infty}_c(B_r(x^0_m))} & \frac{\int_{\partial \Omega_m} v^2\,\big|\B_{\Omega_m}\big|\, d\mathcal{H}^{n-1}}{\int_{\R^n}|\nabla v|^2\,dx} 
    \\
& \leq  \frac{c(n)\,(1+L_\Omega^{n+8})\,d_\Omega^n}{R_\Omega^{n}}\,\Bigg(\displaystyle 
\sup_{
\begin{tiny}
 \begin{array}{c}{x^0\in\partial \Omega}\\
v\in C^{0,1}_c\big(B_{c(n)(1+L_\Omega^3)(r+1/m)}(x^0)\big)
 \end{array}
  \end{tiny}
}
\frac{\int_{\partial \Omega} v^2\,\big|\B_{\Omega}\big|\,d\mathcal{H}^{n-1}}{\int_{\R^n}|\nabla v|^2\,dx}\Bigg)+\tilde{c}\,.
\end{split}
\end{equation*}
From this, \eqref{c:hatil} and the isocapacitary equivalence \cite[Theorem 2.4.1]{maz}, we finally obtain the desired estimate
\begin{equation}\label{is:capprec}
    \mathcal{K}_{\Omega_m}(r)\leq 
\begin{dcases}
     \frac{c(n)\,(1+L_\Omega^{n+8})\,d_\Omega^n}{R_\Omega^{n}}\,\mathcal{K}_{\Omega}\Big( c(n)(1+L_\Omega^3)(r+\tfrac{1}{m})\Big)+\frac{c(n)\,(1+L_\Omega^{25})\,d_\Omega^n}{R_\Omega^{n+1}}\,r  \quad & \text{if $n\geq 3$ }
     \\
      \frac{c(n)\,(1+L_\Omega^{n+8})\,d_\Omega^n}{R_\Omega^{n}}\,\mathcal{K}_{\Omega}\Big( c(n)(1+L_\Omega^3)(r+\tfrac{1}{m})\Big)+\frac{c(n)\,(1+L_\Omega^{31})\,d_\Omega^n}{R_\Omega^{n+1}}\,r\,\log\Big(1+\frac{1}{r}\Big)  \quad & \text{if $n=2$,}
\end{dcases}
\end{equation}
for all $r\leq r_0$ and $m>m_0$, and the proof is complete.

\section*{Acknowledgments}
\noindent
I would like to thank professors A. Cianchi, G. Ciraolo and A. Farina for suggesting the problem, and for useful discussions and observations on the topic. 

The author has been partially supported by the ``Gruppo Nazionale per l'Analisi Matematica, la Probabilit\`a e le loro Applicazioni'' (GNAMPA) of the ``Istituto Nazionale di Alta Matematica'' (INdAM, Italy).
\medskip

\par\noindent {\bf Data availability statement.} Data sharing not applicable to this article as no datasets were generated or analysed during the current study.

\end{document}